\documentclass[11pt, oneside]{amsart}

\usepackage{tabularx, hyperref}
\usepackage{amssymb} \usepackage{amsfonts} \usepackage{amsmath}
\usepackage{amsthm} \usepackage{epsfig, subfig}
\usepackage{ amscd, amsxtra, latexsym}
\usepackage[all]{xy}
\usepackage{caption}
\usepackage{enumerate}
\usepackage{color}

\addtolength{\captionmargin}{1cm}

\newtheorem{lemma}{Lemma}[section]
\newtheorem{thm}[lemma]{Theorem}
\newtheorem{prop}[lemma]{Proposition}
\newtheorem{cor}[lemma]{Corollary} 

\theoremstyle{definition}
\newtheorem{defn}[lemma]{Definition}
\newtheorem{rem}[lemma]{Remark}
\newtheorem{problem}[lemma]{Problem}

\def\@oddhead{\hfill \shorttitle \hfill \thepage}
\def\@evenhead{\thepage \hfill \shortauthor \hfill}
\def\@oddfoot{}
\def\@evenfoot{}

\newcommand{\G}{\Gamma}

\begin{document}

\title[QI-rigidity of piecewise geometric manifolds]{Quasi-isometric rigidity \\ of piecewise geometric manifolds}

\author[]{R. Frigerio}
\address{Dipartimento di Matematica, Universit\`a di Pisa, Largo B. Pontecorvo 5, 56127 Pisa, Italy}
\email{frigerio@dm.unipi.it}

\keywords{Prime decomposition, JSJ decomposition, relatively hyperbolic group, asymptotic cone, Bass-Serre tree, non-positive curvature, hyperbolic manifolds.}

\subjclass[2000]{20F65, 20F67, 20F69, 57M07, 57M50, 57M60, 20E06, 20E08, 53C23.}

\maketitle

\begin{abstract}
Two groups are virtually isomorphic if they can be obtained one from the other via a finite number of steps, where each step consists in taking a finite extension
or a finite index subgroup (or viceversa). Virtually isomorphic groups are always quasi-isometric, and a group $\G$ is quasi-isometrically rigid if every group quasi-isometric to $\G$ is virtually isomorphic to $\G$.
In this survey we describe quasi-isometric rigidity results for fundamental groups of manifolds which can be decomposed into geometric pieces. After stating by now classical results on  lattices in semisimple Lie groups, 
we focus on the class of fundamental groups of $3$-manifolds, and describe the behaviour of quasi-isometries with respect to the Milnor-Kneser prime decomposition (following Papasoglu and Whyte)
and with respect to the JSJ decomposition (following Kapovich and Leeb). We also discuss quasi-isometric rigidity results for fundamental groups
of higher dimensional graph manifolds, that were recently defined by Lafont, Sisto and the author. Our main tools are the study of geometric group actions and quasi-actions on Riemannian manifolds and on trees of spaces, via
the analysis of the induced actions on asymptotic cones.
\end{abstract}

\section{Introduction}
The study of finitely generated groups up to quasi-isometry is a broad and very active research area which links group theory to geometric topology. As stated, the task of 
classifying groups up to quasi-isometry is certainly too ambitious. Nevertheless, when restricting to classes of groups coming from specific algebraic or geometric contexts,
several astonishing results have been obtained in the last three decades. In this survey we will go through some of them, paying a particular attention to the case of fundamental groups
of manifolds obtained by gluing ``geometric'' pieces.

Once a symmetric finite set $S$ of generators for a group $\Gamma$ has been fixed, one can construct a graph having as vertices the elements of $\G$, in such a way that two vertices
are connected by an edge whenever the corresponding elements of the group are obtained one from the other by right multiplication by a generator. The resulting
graph $C_S(\G)$ is the \emph{Cayley graph of $\G$ (with respect to $S$)}. Its small-scale structure depends on the choice of the set of generators $S$, but the quasi-isometry class
of $C_S(\G)$ is actually independent of $S$, so  the quasi-isometry class of the group $\G$ is well-defined (see Section~\ref{definitions:sec} for the definition of quasi-isometry and for further details). 
It is natural to ask which algebraic and/or geometric properties of the group $\G$ may be encoded by the quasi-isometry class of its Cayley graphs. Another way to phrase the same question
is the following: if $\G,\G'$ are quasi-isometric groups, what sort of properties must $\G$ share with $\G'$? Surprisingly enough, it turns out that the (apparently quite loose) relation of being quasi-isometric
implies many other relations which may look much finer at first glance (see the paragraph after Definition~\ref{rigidity:def} and Subsection~\ref{lattices} 
for a list of many results in this spirit).

In this paper, we will mainly focus on the particular case of fundamental groups
of manifolds which decompose into specific geometric pieces. For the sake of simplicity, henceforth we confine ourselves to the case of \emph{orientable} manifolds. 
In dimension 3 the picture is particularly well understood. 
The now proved  Thurston's geometrization conjecture implies that every closed (i.e.~compact without boundary) $3$-manifold $M$ can be canonically decomposed into locally homogeneous Riemannian manifolds.
Namely, Milnor--Kneser prime decomposition Theorem~\cite{Milnor} implies that 
$M$ can first be cut along spheres into summands $M_1,\ldots,M_k$ such that each $M_i$ (after filling the resulting boundary spheres with disks) either is homeomorphic to $S^2\times S^1$ or is \emph{irreducible}, meaning that every $2$-sphere in it bounds a $3$-ball.
Then, every irreducible manifold admits a canonical decomposition along a finite family of disjoint tori, named \emph{JSJ decomposition} after Jaco--Shalen and Johansson~\cite{JS1,JS2,JS3,Joha}, such that every component obtained
by cutting along the tori is either Seifert fibered or hyperbolic.
A quite vague but natural question is then the following: can quasi-isometries detect the canonical decomposition
of $3$-manifolds? And what can one say in higher dimensions? The main aim of this paper is to survey the work done by many mathematicians in order to provide answers to these questions.

Before going on, let us first describe some phenomenon that quasi-isometries \emph{cannot} detect. 
It readily follows from the definition that every finite group is quasi-isometric to the trivial group, but even more is true.
Following~\cite{DK}, we say that two groups $\G_1,\G_2$ are \emph{virtually isomorphic} if there exist finite index subgroups
$H_i<\G_i$ and finite normal subgroups $F_i\triangleleft H_i$, $i=1,2$, such that $H_1/F_1$ is isomorphic to $H_2/F_2$.
It is an exercise to check that virtual isomorphism is indeed an equivalence relation. In fact, it is the smallest equivalence relation for which
any group is equivalent to any of its finite index subgroups, and any group is equivalent to any of its finite extensions, where
we recall that $\G$ is a finite extension of  $\Gamma'$ if it fits into an exact sequence of the form
$$
1 \longrightarrow F\longrightarrow \G\longrightarrow \G'\longrightarrow 1\ ,
$$
where $F$ is finite. It is not difficult to show that virtually isomorphic groups are quasi-isometric (see Remark~\ref{vi-qi}). On the other hand, being virtually isomorphic
is a very strict condition, so there is no reason why  quasi-isometric groups
should be virtually isomorphic in general. The following definition singles out those situations where quasi-isometry implies virtual isomorphism.

\begin{defn}\label{rigidity:def}
 A group $\G$ is \emph{quasi-isometrically rigid} (or QI-rigid for short) if any group quasi-isometric to $\G$ is in fact virtually isomorphic to $\G$. A collection of groups
 $\mathcal{C}$ is \emph{quasi-isometrically rigid} (or QI-rigid for short) if any group quasi-isometric to a group in $\mathcal{C}$ is virtually isomorphic to (possibly another) group in $\mathcal{C}$.
\end{defn}

We are not giving here the most complete list of groups or of classes of groups that are (or that are not) quasi-isometrically rigid (the interested reader is addressed e.g.~to~\cite{Kap:qi}
or to~\cite[Chapter 23]{DK}). However, it is maybe worth mentioning at least some results that can help the reader to put the subject of this survey in a more general context. 
Some important properties of groups are immediately checked to be preserved by quasi-isometries (this is the case, for example, for amenability). Moreover,
many algebraic properties are preserved by quasi-isometries.
For example, free groups are QI-rigid~\cite{Sta1, Dun} (and this fact is closely related to our discussion of the invariance of the prime decomposition under quasi-isometries in Appendix~\ref{prime:sec}), 
as well as nilpotent groups~\cite{Gro-nilpotent} and abelian groups~\cite{Pansu}. On the contrary, the class of solvable groups is \emph{not} QI-rigid~\cite{Dyu}.
Also algorithmic properties of groups are often visible to quasi-isometries: for example,
the class of finitely presented groups with solvable word problem is QI-rigid (in fact, the class of finitely presented groups is QI-rigid, and the growth type of the Dehn function of a finitely presented group
is a quasi-isometry invariant -- see e.g.~the nice survey by Bridson~\cite{Bri-word}).

\subsection{QI-rigidity of lattices in semisimple Lie groups}\label{lattices}

In this survey we are mainly interested in quasi-isometric rigidity of fundamental groups of manifolds with specific
geometric properties. At least in the case of locally symmetric spaces (which are in some sense the most regular manifolds, hence the most natural spaces to first deal with)
the subject is now completely understood, thanks to the contribution of several mathematicians. The general strategy that lead to the complete classification up to quasi-isometry of lattices in semisimple
Lie groups applies also to the case we are interested in, so we briefly discuss it here. First of all, the fundamental Milnor-Svarc Lemma (see Proposition~\ref{milsv}) asserts
that the fundamental group of any closed Riemannian manifold is quasi-isometric to the universal covering of the manifold. Therefore, uniform lattices in the same semisimple Lie group $G$ are 
quasi-isometric, since they are all quasi-isometric to the symmetric space $X$ associated to $G$. Moreover, the classification of groups quasi-isometric to such lattices coincides with the classification
of groups quasi-isometric to $X$. Now, whenever a group $\Gamma$ is quasi-isometric to a geodesic space $X$, it is 
possible to construct a geometric \emph{quasi-action} of $\Gamma$ on $X$ by quasi-isometries (see Proposition~\ref{quasiact:prop}). In order to show quasi-isometric rigidity of uniform lattices in $G$, one would like to 
turn this quasi-action into a genuine action by isometries. 
Having shown this,
one is then provided with a homomorphism of $\Gamma$ into $G$, and it is routine to show that such a homomorphism
has a finite kernel and a discrete image. 
In the case when $X$ has higher rank, QI-rigidity then holds because every quasi-isometry stays at bounded distance from an isometry~\cite{klelee,EF}. The same is true also for every quaternionic
hyperbolic space of dimension at least two~\cite{Pansu2}, while in the case of real or complex hyperbolic spaces there exist plenty of quasi-isometries which are not at finite distance from any isometry. 
In the real hyperbolic case, as already Gromov pointed out~\cite{Gromov-infinite}, Sullivan's and Tukia's results on \emph{uniformly} quasi-conformal groups~\cite{Tukia,Sullivan} imply that, in dimension strictly bigger than $2$, 
every group quasi-action by \emph{uniform} quasi-isometries is just the perturbation of a genuine isometric action. 
This concludes the proof of QI-rigidity for the class of uniform lattices in $G={\rm Isom}(\mathbb{H}^n)$, $n\geq 3$ (see also~\cite{CC}). 
The complex hyperbolic case is settled thanks to an analogous argument due to Chow~\cite{Chow}.
Finally, QI-rigidity also holds for surface groups
(i.e.~for uniform lattices in $G={\rm Isom}(\mathbb{H}^2)$) thanks to a substantially different proof obtained from the combination of results
by Tukia~\cite{Tu2}, Gabai~\cite{Gabai} and Casson--Jungreis~\cite{CJ}.
Summarizing, we have the following:

\begin{thm}
 Let $G$ be a semisimple Lie group. Then the class of irreducible uniform lattices in $G$ is QI-rigid.
\end{thm}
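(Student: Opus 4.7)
The plan is to follow the general scheme already outlined before the theorem, reducing the statement to the rigidity of a geometric quasi-action on the associated symmetric space and then combining rigidity results from a variety of sources depending on the type of $G$.

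First I would fix an irreducible uniform lattice $\Gamma_0 < G$ and let $X$ denote the symmetric space of $G$. By the Milnor--Svarc Lemma, $\Gamma_0$ is quasi-isometric to $X$, so any group $\Gamma$ quasi-isometric to $\Gamma_0$ is quasi-isometric to $X$ as well. Using Proposition~\ref{quasiact:prop}, I would then promote this quasi-isometry to a geometric quasi-action of $\Gamma$ on $X$ by quasi-isometries. The whole question is now reduced to showing that such a quasi-action can be straightened to a genuine isometric action: once this is achieved, one obtains a homomorphism $\rho : \Gamma \to \mathrm{Isom}(X)$, and a routine argument using properness and cocompactness of the quasi-action shows that $\ker \rho$ is finite and $\rho(\Gamma)$ is a uniform lattice in $\mathrm{Isom}(X)$. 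Since $\mathrm{Isom}(X)$ contains $G$ as a finite-index subgroup (up to commensurability issues of the center that one handles by passing to finite index subgroups), this yields a uniform lattice in $G$ virtually isomorphic to $\Gamma$, which is in turn virtually isomorphic to $\Gamma_0$ by Mostow--Prasad rigidity, at least in the irreducible non-surface case.

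The hard step, namely upgrading the quasi-action to an isometric action, is the place where the nature of $X$ enters, and is where I would split into cases. If $X$ has higher rank, or if $X$ is quaternionic or Cayley hyperbolic, I would invoke the theorems of Kleiner--Leeb and Eskin--Farb (higher rank) and Pansu (quaternionic and Cayley hyperbolic) to the effect that every self quasi-isometry of $X$ lies at bounded distance from an isometry; the quasi-action then descends, modulo bounded error, to an honest action on $\mathrm{Isom}(X)$. If $X$ is real hyperbolic of dimension $n \geq 3$, quasi-isometries correspond to uniformly quasi-conformal homeomorphisms of the sphere at infinity, and the Tukia--Sullivan theorem on uniformly quasi-conformal groups lets me conjugate the boundary action into a conformal one, yielding the required isometric action on $\mathbb{H}^n$. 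In the complex hyperbolic case I would replace Tukia--Sullivan by the analogous result of Chow on uniformly quasi-conformal groups acting on the Heisenberg-type boundary. Finally, if $X = \mathbb{H}^2$, so that $\Gamma_0$ is a surface group, I would use the convergence-group characterization of Fuchsian groups obtained by combining Tukia, Gabai and Casson--Jungreis to conclude that $\Gamma$ admits a faithful, discrete, cocompact action by isometries on $\mathbb{H}^2$.

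The main obstacle is clearly this case-by-case rigidity of quasi-isometries and quasi-actions on symmetric spaces: each of the pieces (Kleiner--Leeb, Eskin--Farb, Pansu, Tukia--Sullivan, Chow, Tukia--Gabai--Casson--Jungreis) is itself a deep theorem, and there is no uniform argument covering all of them. The remaining passages, namely the application of Milnor--Svarc, the construction of the quasi-action, and the final deduction that a cocompact isometric action yields virtual isomorphism with $\Gamma_0$, are essentially formal and can be carried out uniformly across all cases.
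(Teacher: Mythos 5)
Your argument follows the same route the paper sketches before stating the theorem: reduce via Milnor--Svarc and Proposition~\ref{quasiact:prop} to upgrading a geometric quasi-action on the symmetric space $X$ to an isometric action, then split into cases and invoke Kleiner--Leeb/Eskin--Farb in higher rank, Pansu for quaternionic (and Cayley) hyperbolic space, Tukia--Sullivan for $\mathbb{H}^n$ with $n\geq 3$, Chow for complex hyperbolic space, and Tukia--Gabai--Casson--Jungreis for $\mathbb{H}^2$. So the decomposition into cases and the citations are the ones the paper uses.

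There is, however, a genuine error in your last step. After producing a uniform lattice $\Lambda=\rho(\Gamma)$ with $\Gamma$ virtually isomorphic to $\Lambda$, you add that $\Lambda$ is ``in turn virtually isomorphic to $\Gamma_0$ by Mostow--Prasad rigidity.'' This is both unnecessary and false. Definition~\ref{rigidity:def} only asks that $\Gamma$ be virtually isomorphic to \emph{some} group in the class of irreducible uniform lattices in $G$, which you already have once $\Lambda$ is exhibited; you do not need to get back to the original $\Gamma_0$. Moreover, Mostow--Prasad rigidity does not give what you are claiming: it says that two lattices in $G$ which happen to be \emph{isomorphic} are conjugate, not that any two uniform lattices in $G$ are commensurable or virtually isomorphic. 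In general they are not, and the paper itself emphasizes this immediately after the theorem (``a single uniform lattice is usually very far from being QI-rigid''), precisely because $\Gamma_0$ is quasi-isometric to many uniform lattices that are not virtually isomorphic to it. Dropping that final clause fixes the proof without further change.
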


Note however that, since every uniform lattice in $G$ is quasi-isometric to the associated symmetric space $X$, a single uniform lattice
is usually very far from being QI-rigid. Surprisingly enough, much more can be said when dealing with \emph{non-uniform} lattices. In fact, a non-uniform lattice $\Gamma$ in $G$ is quasi-isometric
to the complement in the corresponding symmetric space $X$  of an equivariant collection of horoballs. Such a space is much more rigid than $X$ itself, and this fact can be exploited to show the following:

\begin{thm}[\cite{Schwartz,Schwartz2,EF,Eskin}]
 Let $G$ be a semisimple Lie group distinct from $SL(2,\mathbb{R})$. Then every non-uniform lattice in $G$ is QI-rigid. 
\end{thm}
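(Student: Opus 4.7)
The plan is to adapt the general quasi-action strategy described earlier in the introduction, with the symmetric space $X$ replaced by the more rigid \emph{neutered space} $\Omega$ obtained by removing an equivariant family of open horoballs.

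First I would recall that, since $\Gamma$ is non-uniform, the Borel--Serre-type construction provides a $\Gamma$-equivariant collection $\{B_i\}$ of disjoint open horoballs in $X$ (centered at the parabolic fixed points of $\Gamma$) such that $\Omega = X \setminus \bigsqcup_i B_i$, endowed with the induced path metric, is a geodesic space on which $\Gamma$ acts properly discontinuously and cocompactly by isometries. By the Milnor--\v{S}varc Lemma (Proposition~\ref{milsv}), $\Gamma$ is quasi-isometric to $\Omega$. Consequently, any group $\Gamma'$ quasi-isometric to $\Gamma$ is also quasi-isometric to $\Omega$, and Proposition~\ref{quasiact:prop} furnishes a geometric quasi-action of $\Gamma'$ on $\Omega$ by quasi-isometries.

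The crux of the proof is the following rigidity statement for $\Omega$, which is the main result of Schwartz~\cite{Schwartz,Schwartz2} in the rank-one case (excluding $\mathbb{H}^2$, which corresponds to $SL(2,\mathbb{R})$), of Eskin~\cite{Eskin} in the higher-rank $\mathbb{Q}$-rank $1$ case, and of Eskin--Farb~\cite{EF} in the general higher-rank case: every self-quasi-isometry of $\Omega$ lies at bounded distance from the restriction to $\Omega$ of an isometry of $X$ that permutes the family $\{B_i\}$. The heart of this step is a \emph{horoball detection} argument: one shows that the horospherical boundary components of $\Omega$ are coarsely characterized inside $\Omega$ (e.g.\ via depth, or as the only subsets exhibiting a certain flat/polynomial geometry), so that any quasi-isometry must coarsely permute them. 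Extending the induced boundary map and invoking Tukia--Pansu--Mostow-type rigidity on the ideal boundary of each horosphere (and, in higher rank, Kleiner--Leeb/Eskin--Farb rigidity of Euclidean buildings at infinity) then upgrades the quasi-isometry to a genuine isometry. The exclusion of $SL(2,\mathbb{R})$ is essential here, because $\mathbb{H}^2$ minus horoballs is quasi-isometric to a tree of flats/trivalent tree, which admits wild quasi-isometries.

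Granted this rigidity, the quasi-action of $\Gamma'$ yields a homomorphism $\rho:\Gamma' \to \mathrm{Isom}(X)$ whose image normalizes, up to finite error, the collection $\{B_i\}$; such an image is contained in the commensurator $\mathrm{Comm}_G(\Gamma)$. Standard arguments (properness of the $\Gamma'$-quasi-action together with cocompactness of $\Gamma$ on $\Omega$) show that $\ker \rho$ is finite and that $\rho(\Gamma')$ is discrete, hence a lattice commensurable with $\Gamma$; this exhibits the desired virtual isomorphism between $\Gamma'$ and $\Gamma$. The main obstacle, as indicated, is the rigidity theorem for $\Omega$: packaging the horoball detection and the boundary rigidity in a uniform way across all rank-one (minus $SL(2,\mathbb{R})$) and higher-rank cases is where the substantial technical work of Schwartz, Eskin, and Eskin--Farb lies, and any honest proof must split into these cases.
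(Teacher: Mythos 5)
Your proposal faithfully reproduces the strategy the paper itself sketches (in the introduction around the statement, in the discussion following Proposition~\ref{neutered1}, and again in Subsection~\ref{subseccita}): pass to the neutered space $\Omega$, obtain a geometric quasi-action of $\Gamma'$ on $\Omega$ via Proposition~\ref{quasiact:prop}, use the Schwartz/Eskin/Eskin--Farb rigidity theorem that every self-quasi-isometry of $\Omega$ is at bounded distance from an isometry of $X$ permuting the horoballs (with horoball detection as in Proposition~\ref{neutered1} as the key input), and then argue that the resulting homomorphism into ${\rm Comm}(\Gamma)$ has finite kernel and discrete, finite-covolume image. This is the same approach the paper takes, and your attribution of the rank-one versus higher-rank cases to the respective references is also correct.
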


The fact that patterns inside $X$ (such as the previously mentioned collection of horoballs) can be of use in proving rigidity of maps
will clearly emerge in the  discussion of quasi-isometric rigidity of fundamental groups of manifolds with distinguished submanifolds
(such as the tori of the JSJ-decomposition of irreducible $3$-manifolds, or the tori separating the pieces of higher dimensional graph manifolds introduced in~\cite{FLS}).

For a detailed survey on QI-rigidity of lattices in semisimple Lie groups we refer the reader to~\cite{Farb}.

\subsection{Quasi-isometric invariance of the prime decomposition}
Let us recall that every manifold will be assumed to be orientable throughout the paper. 
A $3$-manifold $M$ is \emph{prime} if it does not admit any non-trivial decomposition as a connected sum (a connected sum is trivial if one of its summands is the $3$-sphere $S^3$),
and it is irreducible if every $2$-sphere embedded in $M$ bounds an embedded $3$-ball. Every irreducible manifold is prime, and the only prime manifold which is not irreducible is $S^2\times S^1$. 
As mentioned above, every closed $3$-manifold $M$
admits a canonical decomposition $M=M_1\#\ldots\# M_k$ into 
a finite number of prime summands. Of course we have $\pi_1(M)=\pi_1(M_1)*\ldots *\pi_1(M_k)$, and on the other hand the (proved) Kneser conjecture  asserts
that every splitting of the fundamental group of a closed $3$-manifold as a free product is induced by a splitting of the manifold as a connected sum (see e.g.~\cite[Theorem 7.1]{Hempel}).
As a consequence, the study of the quasi-isometry invariance of the prime decomposition boils down to the
study of the behaviour of quasi-isometries with respect to 
free decompositions of groups, a task that has been completely addressed by Papasoglu and Whyte in~\cite{PapaWhyte}.

It is not  true that the quasi-isometry type of the fundamental group recongnizes whether a manifold is prime, since $\pi_1(S^2\times S^1)=\mathbb{Z}$ and 
$\pi_1(\mathbb{P}^3(\mathbb{R})\#\mathbb{P}^3(\mathbb{R}))=\mathbb{Z}\rtimes \mathbb{Z}_2$ are obviously virtually isomorphic (however, Theorem~\ref{prime:thm} implies that irreducibility is detected by
the quasi-isometry class of the fundamental group). Moreover, if $\G_1,\G_2$ contain at least three elements and
$F$ is virtually cyclic, then $\G_1 *\G_2 * F$ is quasi-isometric to $\G_1 *\G_2$~\cite{PapaWhyte}, so quasi-isometries cannot see in general all the pieces of the prime decomposition of a manifold. However, the following result shows that 
quasi-invariance
of the prime decomposition with respect to quasi-isometries holds as strongly as the just mentioned  examples allow. For the sake of convenience, let us say that
a prime manifold is \emph{big} if its fundamental group is not virtually cyclic (in particular, it is infinite). 

\begin{thm}\label{prime:thm}
 Let $M,M'$ be closed orientable $3$-manifolds. Then $\pi_1(M)$ is quasi-isometric to $\pi_1(M')$ if and only if one of the following mutually exclusive
conditions holds:
\begin{enumerate}
 \item Both $M$ and $M'$ are prime with finite fundamental group.
 \item Both $M$ and $M'$ are irreducible with infinite quasi-isometric fundamental groups. 
 \item $M$ and $M'$ belong to the set $\{S^2\times S^1,\mathbb{P}^3(\mathbb{R})\#\mathbb{P}^3(\mathbb{R})\}$.
 \item both $M$ and $M'$ are not prime and distinct from $\mathbb{P}^3(\mathbb{R})\#\mathbb{P}^3(\mathbb{R})$; moreover, if $N$ is a big piece in the prime decomposition of $M$, then there exists a big piece
 $N'$ in the prime decomposition of $M'$ such that $\pi_1(N)$ is quasi-isometric to $\pi_1(N')$, and viceversa.
\end{enumerate} 
 \end{thm}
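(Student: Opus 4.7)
The plan is to reduce the theorem to a statement about quasi-isometric classification of free products of groups and then to apply the Papasoglu--Whyte theorem. First I would use the Milnor--Kneser decomposition to write $M = M_1 \# \cdots \# M_k$ with each $M_i$ prime, so that $\pi_1(M) = \pi_1(M_1) \ast \cdots \ast \pi_1(M_k)$, and similarly for $M'$. A preliminary observation is that the fundamental group of a closed orientable prime $3$-manifold is either finite (spherical pieces), infinite cyclic (only for $S^2 \times S^1$), or infinite and one-ended (otherwise, by the sphere theorem together with the Poincar\'e conjecture, which also rules out trivial prime summands).

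For the ``if'' direction I would handle each case separately. Cases (1) and (2) are essentially tautological: all finite groups are quasi-isometric to the trivial group, and case (2) is hypothesis. In case (3), the groups $\pi_1(S^2 \times S^1) = \mathbb{Z}$ and $\pi_1(\mathbb{P}^3(\mathbb{R}) \# \mathbb{P}^3(\mathbb{R})) = \mathbb{Z}_2 \ast \mathbb{Z}_2$ are both virtually $\mathbb{Z}$, hence virtually isomorphic and therefore quasi-isometric. For case (4), I would invoke the Papasoglu--Whyte classification, according to which the quasi-isometry type of a finitely generated free product of one-ended or finite groups is determined by the multiset of quasi-isometry classes of its big (non-virtually-cyclic) factors; combining this with the identity $\G_1 \ast \G_2 \ast F \sim_{QI} \G_1 \ast \G_2$ for $F$ virtually cyclic and $|\G_i| \geq 3$ cited in the text (which absorbs all virtually cyclic factors), a pairing of big prime pieces up to quasi-isometry yields quasi-isometry of the full free products.

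For the ``only if'' direction the key idea is to stratify according to the number of ends of $\pi_1(M)$, which is a quasi-isometry invariant. If $\pi_1(M)$ has zero ends it is finite; since a free product of two non-trivial groups is never finite, we must have $k=1$, so both $M$ and $M'$ are prime with finite fundamental group, giving case (1). If $\pi_1(M)$ has one end, Stallings' theorem forbids any non-trivial free splitting, so both manifolds are prime with one-ended infinite $\pi_1$, hence irreducible, giving case (2). If $\pi_1(M)$ has two ends it is virtually $\mathbb{Z}$, and the only free products of non-trivial groups with this property are $\mathbb{Z}$ itself and $\mathbb{Z}_2 \ast \mathbb{Z}_2$, forcing $M,M' \in \{S^2 \times S^1, \mathbb{P}^3(\mathbb{R}) \# \mathbb{P}^3(\mathbb{R})\}$, giving case (3). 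Finally, if $\pi_1(M)$ has infinitely many ends, Stallings gives a non-trivial free splitting, so both manifolds are non-prime, and applying Papasoglu--Whyte directly to the prime decompositions produces the required pairing of big pieces, giving case (4).

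The main obstacle is of course the Papasoglu--Whyte theorem itself, whose proof requires controlling how an arbitrary quasi-isometry between free products interacts with the Bass--Serre tree of the free splitting, via asymptotic cone and tree-of-spaces techniques that form one of the main themes of this survey. Modulo this deep input, the remainder of the argument is a routine end-counting analysis combined with the classification of closed orientable prime $3$-manifolds with small fundamental group.
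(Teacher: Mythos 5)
Your proposal takes essentially the same route as the paper: stratify by the number of ends of $\pi_1(M)$ (a QI invariant), dispose of the cases $e\in\{0,1,2\}$ by elementary $3$-manifold topology plus Stallings, and apply the Papasoglu--Whyte theorem to the terminal decomposition induced by the prime decomposition when $e=\infty$. The paper packages the end-counting into a single lemma (Lemma~\ref{ends:lemma}) and then invokes Theorem~0.4 of~\cite{PapaWhyte} verbatim, but the substance is identical to your stratification.

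One small but genuine slip: you state Papasoglu--Whyte as saying the QI type of such a free product is determined by the \emph{multiset} of QI classes of its big factors. The correct statement (and the one the paper uses) is that only the \emph{set} of QI types of one-ended factors matters -- multiplicity is invisible, because for infinite $\G_i$ one has $\G_1*\G_2\sim_{QI}\G_1*\G_2*\G_2$, just as virtually cyclic free factors are invisible. This matters in the ``if'' direction of case~(4): condition~(4) only asserts that the \emph{sets} of QI types of big pieces agree (there need be no bijection, and $M$ and $M'$ may have different numbers of big pieces with QI-equivalent fundamental groups), so the multiset version would not let you conclude, and the identity $\G_1*\G_2*F\sim_{QI}\G_1*\G_2$ for $F$ virtually cyclic does not by itself absorb repeated one-ended factors. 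Stating Papasoglu--Whyte with ``set'' fixes this and the rest of your argument goes through.
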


 In the spirit of Definition~\ref{rigidity:def}, one may also ask what can be said about groups which are quasi-isometric to the fundamental group of a closed $3$-manifold. The following holds:

 \begin{thm}\label{prime2:thm}
  Let $M$ be a closed non-prime $3$-manifold distinct from $\mathbb{P}^3(\mathbb{R})\#\mathbb{P}^3(\mathbb{R})$, and let $\G$ be a finitely generated group. Then $\G$ is quasi-isometric to $\pi_1(M)$ if and only if 
  $\G$ has infinitely many ends  and 
  splits as the fundamental group of a graph of groups with finite edge groups such that
  the set of quasi-isometry types of one-ended vertex groups coincides with the set of quasi-isometry types of big summands in the prime decomposition of $M$. 
 \end{thm}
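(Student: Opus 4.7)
The plan is to deduce the theorem from the work of Papasoglu--Whyte on quasi-isometry invariance of splittings over finite subgroups, combined with Stallings' theorem on ends and Dunwoody's accessibility.

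For the forward implication, I would first verify that $\pi_1(M)$ has infinitely many ends. Since $M$ is non-prime, $\pi_1(M)$ splits as a non-trivial free product $\pi_1(M_1)*\cdots*\pi_1(M_k)$ with $k\ge 2$; the hypothesis $M\neq \mathbb{P}^3(\mathbb{R})\#\mathbb{P}^3(\mathbb{R})$ rules out the only case in which this free product is virtually cyclic (namely, the infinite dihedral group $\mathbb{Z}_2*\mathbb{Z}_2$), so by Stallings' theorem $\pi_1(M)$ has infinitely many ends. Since the number of ends and finite presentability are both quasi-isometry invariants, the same holds for $\G$, and Dunwoody accessibility then provides $\G$ with a graph-of-groups decomposition having finite edge groups and vertex groups that are either finite or one-ended. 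On the side of $\pi_1(M)$, refining the prime decomposition to a Stallings--Dunwoody splitting leaves precisely the fundamental groups of the big summands as one-ended vertex groups: finite summands contribute finite vertex groups, $S^2\times S^1$ summands contribute trivial vertex groups via the HNN presentation of $\mathbb{Z}$, and each big summand is irreducible with torsion-free infinite fundamental group (torsion-free by the sphere theorem), which is one-ended because any non-trivial free-product decomposition of $\pi_1(M_i)$ would, via Kneser's conjecture, violate irreducibility of $M_i$. Papasoglu--Whyte's theorem, asserting that the \emph{set} of QI-types of one-ended vertex groups in such a splitting is a QI-invariant, now yields the desired match between the two sides.

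For the backward implication, $\G$ is already assumed to come equipped with a graph-of-groups splitting of the correct form, so one simply invokes Papasoglu--Whyte in the converse direction: any two finitely presented groups with infinitely many ends whose Stallings--Dunwoody splittings share the same set of QI-types of one-ended vertex groups are quasi-isometric.

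The main obstacle is the bookkeeping around the low-complexity cases in Papasoglu--Whyte: one must check that virtually-cyclic prime summands of $M$ (those with finite or $\mathbb{Z}$ fundamental group) contribute no one-ended vertex groups, and moreover that multiplicity of repeated QI-types is irrelevant, so that only the \emph{set} of QI-classes of one-ended vertex groups is a complete QI-invariant. The exclusion of $\mathbb{P}^3(\mathbb{R})\#\mathbb{P}^3(\mathbb{R})$ is exactly what places $\pi_1(M)$ in the infinitely-many-ends regime where the Papasoglu--Whyte machinery applies; once this is secured, the argument reduces to matching the pieces produced by accessibility on each side.
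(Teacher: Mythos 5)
Your proposal is correct and follows essentially the same route as the paper: reduce to the Papasoglu--Whyte QI-classification of accessible groups via their terminal Stallings--Dunwoody splittings, check that $\pi_1(M)$ has infinitely many ends using Stallings' theorem and the exclusion of $\mathbb{P}^3(\mathbb{R})\#\mathbb{P}^3(\mathbb{R})$, and identify the one-ended factors of the terminal decomposition coming from the prime decomposition with the fundamental groups of the big summands. The only place you are slightly terse is in concluding that a big summand has one-ended fundamental group: Kneser's conjecture only directly rules out the $e=\infty$ case, and you should note explicitly that the $e=2$ case is excluded because ``big'' means not virtually cyclic (equivalently, for an irreducible manifold, that $\mathbb{Z}$ cannot be the fundamental group of a closed aspherical $3$-manifold by cohomological dimension), which is exactly the content of the paper's Lemma~\ref{ends:lemma}.
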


Theorems~\ref{prime:thm} and~\ref{prime2:thm} readily descend from~\cite{PapaWhyte}.  
 Nevertheless, since apparently they are not available in an explicit form elsewhere, we will provide a quick proof of them in the appendix.
By Theorems~\ref{prime:thm} and~\ref{prime2:thm}, the problem of classifying fundamental groups of $3$-manifolds up to quasi-isometry is now reduced to the study of irreducible ones. 
 
 \subsection{QI-rigidity of the eight $3$-dimensional geometries}\label{8geom:subsec}
As already mentioned, the now proved Thurston's geometrization conjecture states that every closed irreducible $3$-manifold decomposes along a finite family of disjoint $\pi_1$-injective tori (and Klein bottles)
into a finite collection of manifolds each of which admits a complete finite-volume locally homogeneous Riemannian metric (henceforth, such a manifold will be said to be \emph{geometric}). 
Strictly speaking, this decomposition (which will be called \emph{geometric} form now on) is not exactly the same as the classical JSJ decomposition of the manifold, 
the only differences being the following ones: 
\begin{enumerate}
\item
If $M$ is a closed manifold locally isometric to $Sol$ (see below), then 
the geometric decomposition of $M$ is trivial. On the other hand,
$M$ either is a torus bundle over $S^1$, or it is obtained by gluing along their toric boundaries two 
twisted orientable $I$-bundles over the Klein bottle. In the first case, the JSJ decomposition of $M$ is obtained by cutting along a fiber of the bundle, in the second
one it is obtained by cutting along the torus that lies bewteen the $I$-bundles. 
\item
If $M$ is not as in (1), then
each torus of the JSJ decomposition which bounds
a twisted orientable $I$-bundle over a Klein bottle is replaced in the geometric decomposition
by the core Klein bottle of the bundle.
\end{enumerate}
In fact, neither (the internal part of) the twisted orientable $I$-bundle over the Klein bottle nor $S^1\times S^1\times (0,1)$ supports a locally homogeneous Riemannian
metric of finite volume.
The geometric decomposition is still canonical (i.e.~preserved by homeomorphisms) and cuts the manifolds into geometric pieces in the sense described above.

Up to equivalence, there exist exactly eight homogeneous Riemannian $3$-manifolds which admit a finite-volume quotient:
the constant curvature spaces $S^3,\mathbb{R}^3,\mathbb{H}^3$, the product spaces $S^2\times \mathbb{R}$, $\mathbb{H}^2\times \mathbb{R}$, and the $3$-dimensional Lie groups $\widetilde{SL_2}=
\widetilde{SL_2(\mathbb{R})}$, $Sol$, $Nil$~\cite{Thurston, Scott} (the equivalence that needs to be taken into account is not quite the relation of being homothetic, because 
a single Lie group may admit several non-homothetic left-invariant
metrics, and we don't want to consider the same Lie group endowed with two non-homotetic metrics as two distinct geometries).
We will call these spaces ``geometries'' or ``models''. We will say that a manifold is \emph{geometric} if it admits a complete finite-volume Riemannian metric locally isometric to one of the eight models just introduced
(with the exception of $\mathbb{P}^3(\mathbb{R})\# \mathbb{P}^3(\mathbb{R})$ and of $S^2\times S^1$, every geometric manifold is automatically irreducible).
It is well-known that, with the exception of $\mathbb{H}^2\times \mathbb{R}$ that is quasi-isometric to $\widetilde{SL_2}$, the eight $3$-dimensional geometries are pairwise not quasi-isometric (see Proposition~\ref{8models}). 
Together with Milnor--Svarc Lemma, this information is sufficient to classify irreducible $3$-manifolds with trivial geometric decomposition, up to quasi-isometries of their fundamental groups:

\begin{prop}\label{8geom:prop}
 Let $M_1,M_2$ be closed irreducible  $3$-manifolds, and suppose that $M_i$ admits a locally homogeneous Riemannian metric locally isometric to $X_i$, where $X_i$ is one of the eight
 geometries described above. Then $\pi_1(M_1)$ is quasi-isometric to $\pi_1(M_2)$ if and only if either $X_1=X_2$ or $X_1,X_2\in\{\widetilde{SL_2},\mathbb{H}^2\times\mathbb{R}\}$. 
\end{prop}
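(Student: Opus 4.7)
The plan is to reduce the statement, via the Milnor-Svarc Lemma, to the quasi-isometric classification of the eight model geometries themselves, and then separate the resulting classes with a chain of classical QI-invariants.

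First I would observe that, since each $M_i$ is closed and carries a locally homogeneous Riemannian metric modeled on $X_i$, the universal Riemannian cover of $M_i$ is isometric to $X_i$, and $\pi_1(M_i)$ acts on $X_i$ properly discontinuously, cocompactly, and by isometries. The Milnor-Svarc Lemma (Proposition~\ref{milsv}) then gives a quasi-isometry $\pi_1(M_i)\sim_{QI} X_i$. Consequently, proving the proposition amounts to showing that, among the eight model geometries, any two are quasi-isometric if and only if they coincide or lie in $\{\widetilde{SL_2},\mathbb{H}^2\times\mathbb{R}\}$; this is exactly the content of Proposition~\ref{8models}, which I would invoke.

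For the non-trivial part of the ``if'' direction, namely $\widetilde{SL_2}\sim_{QI}\mathbb{H}^2\times\mathbb{R}$, the cleanest argument is to pick a closed hyperbolic surface $\Sigma$ and observe that its unit tangent bundle $T^1\Sigma$ is a closed Seifert manifold whose universal cover is $\widetilde{SL_2}$, while $\Sigma\times S^1$ is a closed manifold whose universal cover is $\mathbb{H}^2\times\mathbb{R}$; since $T^1\Sigma$ and $\Sigma\times S^1$ admit a common finite cover (reflecting the fact that $\widetilde{SL_2}\to\mathbb{H}^2$ is a bounded-twist $\mathbb{R}$-bundle bilipschitz to the product), a single group acts cocompactly by isometries on both spaces and Milnor-Svarc yields the quasi-isometry.

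For the ``only if'' direction I would distinguish the seven remaining QI-classes by a cascade of invariants: boundedness isolates $S^3$; the number of ends isolates $S^2\times\mathbb{R}$ (two ends) from the one-ended geometries $\mathbb{R}^3$, $Nil$, $Sol$, $\mathbb{H}^3$, $\mathbb{H}^2\times\mathbb{R}$, $\widetilde{SL_2}$; Gromov hyperbolicity isolates $\mathbb{H}^3$, since the remaining geometries contain quasi-isometrically embedded copies of $\mathbb{R}^2$; and polynomial versus exponential volume growth, together with the quasi-isometric invariance of the degree of polynomial growth (Pansu), separates $\mathbb{R}^3$ (degree $3$) from $Nil$ (degree $4$) and both from the exponential-growth triple $Sol$, $\mathbb{H}^2\times\mathbb{R}$, $\widetilde{SL_2}$.

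The genuine obstacle is the last step: showing $Sol \not\sim_{QI} \mathbb{H}^2\times\mathbb{R}$. Here I would appeal to structural properties of flats and quasi-flats that are visible in the asymptotic cone. The cone of $\mathbb{H}^2\times\mathbb{R}$ splits as the product of an $\mathbb{R}$-tree and a line, reflecting the fact that $\mathbb{H}^2\times\mathbb{R}$ contains quasi-isometrically embedded hyperbolic planes and a global product structure; on the other hand, the cone of $Sol$ is a horocyclic product of two $\mathbb{R}$-trees admitting no such product decomposition, and $Sol$ contains no quasi-isometrically embedded $\mathbb{H}^2$. This asymmetry, made precise by the work of Eskin-Fisher-Whyte on QI-rigidity of $Sol$, completes the separation and hence the proof.
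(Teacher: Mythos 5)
Your reduction via the Milnor--Svarc Lemma to a quasi-isometric comparison of the eight model geometries themselves is precisely the paper's route, and your cascade of invariants (boundedness, number of ends, Gromov hyperbolicity, degree of polynomial growth, and finally a $Sol$-specific argument) is a legitimate alternative to the paper's Proposition~\ref{8models}, which instead separates the geometries by reading off the shape of their asymptotic cones (a point; unbounded and locally compact, distinguished by growth; a line; a tree; a tree times a line; non-simply-connected). Both collections of invariants work; the paper's version just fits its theme of using asymptotic cones uniformly, whereas yours leans on more elementary quasi-isometry invariants and invokes the Eskin--Fisher--Whyte machinery only where it is really needed, for $Sol$.

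There is, however, a genuine error in your argument that $\widetilde{SL_2}$ is quasi-isometric to $\mathbb{H}^2\times\mathbb{R}$. The manifolds $T^1\Sigma$ and $\Sigma\times S^1$ do \emph{not} admit a common finite cover: $T^1\Sigma$ is a circle bundle over $\Sigma$ with Euler number $2-2g\neq 0$, and under a finite cover the Euler number is multiplied by a nonzero rational factor, while a product keeps Euler number $0$; so no closed $\widetilde{SL_2}$-manifold is commensurable with any closed $\mathbb{H}^2\times\mathbb{R}$-manifold. (This is exactly why the paper is careful to treat $\{\widetilde{SL_2},\mathbb{H}^2\times\mathbb{R}\}$ as a single quasi-isometry class but as two distinct geometric types.) The quasi-isometry between the two geometries is a purely metric fact, and it is the one you mention only parenthetically: $\widetilde{SL_2}$ fibers over $\mathbb{H}^2$ as a line bundle whose connection form is bounded, and any such bundle is bi-Lipschitz homeomorphic to the metric product $\mathbb{H}^2\times\mathbb{R}$. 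It is this bi-Lipschitz map, not a shared lattice, that supplies the quasi-isometry, and the Milnor--Svarc step is unnecessary here. A smaller point: it is doubtful that $Nil$ contains a quasi-isometrically embedded $\mathbb{R}^2$, since its two-dimensional abelian subgroups all pass through the quadratically-distorted center; but $Nil$ is nonetheless not Gromov hyperbolic (its growth is polynomial of degree $4$, and its asymptotic cone is a Carnot group rather than a tree), so that step of your cascade is easily repaired.
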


Together with the fact that a closed $3$-manifold cannot support both a structure locally modeled on $\mathbb{H}^2\times\mathbb{R}$ and
a structure modeled on $\widetilde{SL_2}$, Proposition~\ref{8geom:prop} implies tha the geometry of a single closed irreducible manifold with trivial geometric decomposition is uniquely determined by its fundamental group.

We have already seen  that the class of fundamental groups of closed hyperbolic $3$-manifolds is QI-rigid.
The following result extends quasi-isometric rigidity to   
the remaining seven 
$3$-dimensional geometries.

 \begin{thm}\label{8geom:rigidity}
 Let $X$ be any of the eight $3$-dimensional geometries, and let $\Gamma$ be a group quasi-isometric to the fundamental group of a closed manifold modeled on $X$.
 If $X\neq \mathbb{H}^2\times \mathbb{R}$ and $X\neq\widetilde{SL_2(\mathbb{R})}$, then $\Gamma$ is virtually isomorphic to the fundamental group of a closed manifold modeled on $X$.
 If $X=\mathbb{H}^2\times \mathbb{R}$ or $X=\widetilde{SL_2(\mathbb{R})}$, then
 $\Gamma$ is virtually isomorphic to the fundamental group of a closed manifold modeled on $\mathbb{H}^2\times \mathbb{R}$ or on $\widetilde{SL_2(\mathbb{R})}$.
 \end{thm}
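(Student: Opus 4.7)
The plan is to reduce the statement, via Milnor--Svarc (Proposition~\ref{milsv}), to the classification of finitely generated groups quasi-isometric to each model $X$, and then to invoke a QI-rigidity theorem from the literature tailored to that specific model. Concretely, it would suffice to show that every $\G$ quasi-isometric to $X$ is virtually a uniform lattice in $\mathrm{Isom}(X)$, with the understanding that for $\mathbb{H}^2\times\mathbb{R}$ and $\widetilde{SL_2(\mathbb{R})}$ the appropriate target is the union of the two corresponding classes of uniform lattices. I would then proceed case by case through the eight geometries.

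Four cases admit rather classical treatments. For $X=S^3$ the relevant fundamental groups are finite, so $\G$ is quasi-isometric to a point, hence finite, hence virtually isomorphic to $\pi_1(S^3)=1$. For $X=S^2\times\mathbb{R}$ the model has two ends and linear growth; the corresponding $\G$ is two-ended, so by Stallings' ends theorem together with the classification of two-ended groups it is virtually $\mathbb{Z}=\pi_1(S^2\times S^1)$. For $X=\mathbb{R}^3$ and $X=Nil$ the model has polynomial growth, so $\G$ has polynomial growth and is virtually nilpotent by Gromov~\cite{Gro-nilpotent}; Pansu's rigidity theorem~\cite{Pansu} for the Carnot-graded Malcev completion of a virtually nilpotent group then forces $\G$ to be virtually $\mathbb{Z}^3$ in the flat case and virtually a uniform lattice in $Nil$ in the nilpotent case. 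Finally, $X=\mathbb{H}^3$ is already settled by the discussion of Subsection~\ref{lattices}, via the theorems of Tukia and Sullivan on uniformly quasiconformal groups acting on $S^2$.

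The main obstacle consists of the two remaining cases. For $X=Sol$, I would appeal to the QI-rigidity theorem of Eskin--Fisher--Whyte: every finitely generated group quasi-isometric to $Sol$ is virtually a uniform lattice in $Sol$. This is by far the technically deepest ingredient, relying on the coarse differentiation technique for quasi-isometries of non-unimodular solvable groups. For the paired geometries $\mathbb{H}^2\times\mathbb{R}$ and $\widetilde{SL_2(\mathbb{R})}$, which are quasi-isometric to one another by Proposition~\ref{8geom:prop}, it suffices to exhibit any $\G$ quasi-isometric to $\mathbb{H}^2\times\mathbb{R}$ as virtually the fundamental group of \emph{some} closed Seifert fibered $3$-manifold modeled on one of the two. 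The approach is Rieffel's theorem: the quasi-isometry class of $\mathbb{H}^2\times\mathbb{R}$ forces $\G$ to contain a central infinite cyclic subgroup whose quotient is virtually a cocompact Fuchsian group, and such central $\mathbb{Z}$-extensions of cocompact Fuchsian groups are exactly the fundamental groups of closed Seifert fibered $3$-manifolds with hyperbolic base (with the Euler number of the extension distinguishing the $\mathbb{H}^2\times\mathbb{R}$ case from the $\widetilde{SL_2(\mathbb{R})}$ case). The hard step is the detection of this central $\mathbb{Z}$ from purely asymptotic data: Rieffel's argument recognizes the $\mathbb{R}$-factor of the model as a coarsely well-defined foliation preserved by every self-quasi-isometry, which after passing to the group produces the desired central subgroup.
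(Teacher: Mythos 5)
Your proposal follows essentially the same route as the paper: a case-by-case reduction to known QI-rigidity theorems, with $S^3$ and $S^2\times\mathbb{R}$ handled by finiteness and two-endedness, $\mathbb{H}^3$ by Sullivan--Tukia, $Sol$ by Eskin--Fisher--Whyte, and the paired geometries $\mathbb{H}^2\times\mathbb{R}$, $\widetilde{SL_2(\mathbb{R})}$ by Rieffel. The one place you diverge is the $Nil$ case: after Gromov's polynomial-growth theorem you invoke Pansu's Carnot-graded rigidity to pin down the Malcev completion, whereas the paper proceeds more elementarily via Guivarc'h's classification of simply connected nilpotent Lie groups of growth degree $4$ (only $Nil$ and $\mathbb{R}^4$), ruling out $\mathbb{R}^4$ because the Heisenberg group is not virtually abelian; both arguments are correct and of comparable depth.
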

 \begin{proof}
 The cases
of $S^3$ and of $S^2\times\mathbb{R}$ are obvious due to the quasi-isometric rigidity of cyclic groups.
   The case of $\mathbb{H}^2\times\mathbb{R}$ and $\widetilde{SL_2(\mathbb{R})}$ is due to Rieffel~\cite{Rieffel} (see also~\cite{klelee2}), 
   while quasi-isometric rigidity of abelian groups follows from~\cite{Gro-nilpotent,Pansu}.
   The case of $Sol$ is settled in~\cite{EFW}. Finally, quasi-isometric rigidity of $Nil$ may be proved as follows.
   Let $\G$ be quasi-isometric to $Nil$. Quasi-isometric rigidity of nilpotent groups~\cite{Gro-nilpotent} implies that $\G$ is virtually nilpotent. Since every finitely generated nilpotent group is linear,
   by Selberg Lemma we may suppose, up to virtual isomorphism, that $\G$ is nilpotent and torsion-free. 
   The Malcev closure $G$ of $\G$ is a simply connected nilpotent Lie group in which $\G$ embeds as a uniform lattice~\cite{Rag}. 
  Being quasi-isometric, the groups $\G$, $G$  and $Nil$ must have equivalent growth functions. Now results of Guivarc'h~\cite{Guiv} imply that 
  the growth of $Nil$ is polynomial of degree $4$, and that the only simply connected nilpotent Lie groups with polynomial growth of degree $4$ are $Nil$ and $\mathbb{R}^4$.
  In order to conclude it is now sufficient to observe that the case $G=\mathbb{R}^4$ cannot hold, since otherwise the Heisenberg group (which is a uniform lattice in $Nil$) would be quasi-isometric
  to $\mathbb{Z}^4$, whence virtually abelian.
 \end{proof}

\subsection{Quasi-isometric invariance of the JSJ decomposition}\label{JSJ:subsec}

Let $M$ be a closed irreducible $3$-manifold.
We are now interested in the case when the decomposition of $M$ is not trivial. In this case we say that the manifold is \emph{non-geometric} (because indeed it does not support
any complete finite-volume locally homogeneous metric),
and all the pieces resulting from cutting along the tori and the Klein bottles
of the geometric decomposition are the internal parts of compact irreducible $3$-manifolds bounded by tori. As it is customary in the literature, 
we will often confuse the concrete pieces of the decomposition (which are open) with their natural compactifications obtained by adding some boundary tori.
The possibilities for the geometry of these pieces are much more restricted than in the closed case: the only geometries that admit non-compact  finite-volume quotients (whose compactification is) bounded by tori
are $\mathbb{H}^3$, $\widetilde{SL_2}$ and $\mathbb{H}^2\times \mathbb{R}$. Moreover, if the internal part of a compact irreducible $3$-manifold bounded by tori can be modeled on $\widetilde{SL_2}$, then it can be modeled
also on $\mathbb{H}^2\times\mathbb{R}$, and viceversa (while as in the closed case, complete finite-volume hyperbolic manifolds cannot support a metric locally isometric to $\mathbb{H}^2\times\mathbb{R}$).
Pieces modeled on $\mathbb{H}^2\times \mathbb{R}$ admit a foliation by circles, and are called
\emph{Seifert fibered} (or simply \emph{Seifert}).
They are finitely covered (as foliated manifolds) by manifolds of the form $\Sigma\times S^1$ (endowed with the obvious 
foliation by circles), where $\Sigma$ is a punctured
surface of finite type with negative Euler characteristic.
The first question that comes to mind is then whether
quasi-isometries of the fundamental group can detect the presence or the absence of hyperbolic pieces and/or of Seifert fibered pieces. 
This question was answered in the positive by Kapovich and Leeb in~\cite{kapleenew}. Before stating their result, let us 
recall that, if $N\subseteq M$ is a piece of the geometric decomposition of $M$, then the inclusion $N\hookrightarrow M$ induces
an injective map on fundamental groups. Therefore, $\pi_1(N)$ may be identified with (the conjugacy class of) a subgroup of $\pi_1(M)$, and this fact will be always tacitly understood in the sequel.
Kapovich and Leeb first proved that 
quasi-isometries recognize the presence of Seifert fibered pieces~\cite{kaplee0}, and then they improved their result as follows:

\begin{thm}[\cite{kapleenew}]\label{kaplee}
 Let $M,M'$ be closed irreducible $3$-manifolds, let $f\colon \pi_1(M)\to \pi_1(M')$ be a quasi-isometry, and let $N\subseteq M$ be a piece of the geometric decomposition of $M$. Then there exists a piece
 $N'$ of the geometric decomposition of $M'$ such that $f(\pi_1(N))$ stays at bounded Hausdorff distance from (a conjugate of) $\pi_1(N')$. Moreover, $f|_{\pi_1(N)}$ is at finite distance from a quasi-isometry between $\pi_1(N)$
 and $\pi_1(N')$.
\end{thm}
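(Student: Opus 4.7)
The plan is to lift $f$ to the universal covers via Milnor--Svarc (Proposition~\ref{milsv}) and promote it to a quasi-isometry $\widetilde{f} \colon \widetilde{M} \to \widetilde{M'}$, then to argue that $\widetilde{f}$ essentially respects the tree-of-spaces structure induced by the geometric decomposition. The space $\widetilde{M}$ is a tree of spaces over the Bass--Serre tree $T$ associated with the geometric decomposition of $M$: the vertex spaces are universal covers $\widetilde{N}_v$ of the hyperbolic or Seifert pieces, and the edge spaces are lifts of the decomposition tori, each quasi-isometric to $\mathbb{R}^2$. The goal is to show that $\widetilde{f}$ sends each chamber $\widetilde{N}_v$ within bounded Hausdorff distance of a chamber $\widetilde{N}'_{v'}$ of $\widetilde{M'}$, from which the statement about $f(\pi_1(N))$ follows by descent via Milnor--Svarc, and the quasi-isometry between $\pi_1(N)$ and $\pi_1(N')$ comes for free by composing with a coarse inverse.

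The core of the argument is a quasi-isometry invariant characterization of walls and chambers inside $\widetilde{M}$. The walls are distinguished among all $\mathbb{Z}^2$-periodic planes as \emph{global cut-flats}: removing a wall disconnects $\widetilde{M}$, and this property is coarsely detectable. In a hyperbolic chamber the walls appear as the horospheres fixed by peripheral $\mathbb{Z}^2$ subgroups, where the relatively hyperbolic structure of the chamber does most of the work; in a Seifert chamber (bi-Lipschitz to $\mathbb{H}^2 \times \mathbb{R}$) the walls are singled out among the many 2-flats as those containing the distinguished fiber direction. The two types of chambers are themselves distinguished coarsely: a hyperbolic chamber is relatively hyperbolic with $\mathbb{R}$-tree asymptotic cones away from horoballs, while a Seifert chamber has asymptotic cones foliated by 2-flats through every point.

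The technical engine is the analysis of the asymptotic cones $\text{Cone}_\omega(\widetilde{M})$ and $\text{Cone}_\omega(\widetilde{M'})$. The cone of $\widetilde{M}$ inherits a tree-graded structure, in the sense of Dru\c{t}u--Sapir, whose pieces are ultralimits of chambers and whose attaching subsets are ultralimits of walls. One verifies that walls appear in the cone as global cut-sets and that the ultralimits of hyperbolic versus Seifert chambers are topologically distinguished by the local abundance of 2-flats through generic points. Since $\widetilde{f}$ induces a bi-Lipschitz homeomorphism of cones, this combinatorial-topological data is preserved; transporting the matching of pieces back from the cone to $\widetilde{M}$ (using that the tree-graded decomposition of the cone remembers the original tree of spaces up to bounded error) produces the required correspondence $\widetilde{N}_v \leftrightarrow \widetilde{N}'_{v'}$, and a coarse inverse of $\widetilde{f}$ yields the two-sided bounded Hausdorff distance.

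The hardest step is the internal recognition of walls and chamber types purely from the coarse geometry of $\widetilde{M}$ --- in particular, distinguishing the two possible origins of a $\mathbb{Z}^2$ subgroup (a hyperbolic cusp versus a Seifert fiber attached to a JSJ torus) without referring a priori to the JSJ data, and showing that the decomposition of the cone into pieces is purely topologically/metrically determined. This is where the subtlest work of~\cite{kapleenew} lies: one must combine relative hyperbolicity of hyperbolic chambers, the parabolic structure of horospheres, and the flat-foliation of Seifert chambers with a rigidity principle for bi-Lipschitz maps of tree-graded spaces in order to force the matching at the level of the cone and then transport it down to the spaces themselves.
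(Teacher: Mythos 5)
Your high-level strategy matches the paper's: pass to universal covers via Milnor--\v{S}varc, treat $\widetilde M$ as a tree of spaces, analyze asymptotic cones as tree-graded spaces, characterize walls and chambers by coarse/topological invariants that bi-Lipschitz maps of cones must preserve, and then transport the cone-level conclusions back to the spaces. However, the two concrete characterizations you propose for recognizing walls do not actually work, and you also omit a reduction the paper depends on.

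First, the ``global cut-flat'' criterion fails: in a Seifert chamber, which is coarsely $B\times\mathbb R$ for $B$ a neutered hyperbolic plane, \emph{every} flat of the form $\gamma\times\mathbb R$ with $\gamma$ a bi-infinite geodesic in $B$ separates the chamber, and whenever $\gamma$ separates two boundary geodesics of $B$ the flat $\gamma\times\mathbb R$ globally separates $\widetilde M$ as well. So separation does not single out walls. Second, the criterion ``walls are the $2$-flats containing the distinguished fiber direction'' is vacuous in a Seifert chamber: every $2$-flat there has the form $\gamma\times\mathbb R$ and hence contains the fiber direction. The paper deliberately avoids trying to recognize walls from inside a Seifert chamber; instead it first characterizes $\omega$-chambers (Seifert chambers are detected by bi-Lipschitz images of $T\times\mathbb R$ for $T$ a branching real tree, hyperbolic chambers by the fact that their flats are never essentially separated by other flats, and walls not adjacent to Seifert chambers by the property that they meet every other bi-Lipschitz flat in at most one point), and only afterwards recovers walls as intersections of adjacent $\omega$-chambers. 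Your proposal inverts this order in a way that does not close.

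Third, you never invoke the Kapovich--Leeb reduction replacing $M$ by a bi-Lipschitz-equivalent non-positively curved manifold (preserving the tree-of-spaces structure). Without this step you do not know that chambers and walls are quasi-isometrically (indeed isometrically) embedded in $\widetilde M$, and hence you cannot identify ultralimits of chambers with the asymptotic cones of the chambers, which is the starting point of the entire tree-graded analysis. Finally, the ``transport back from the cone'' step, which you describe as using that ``the tree-graded decomposition remembers the original tree of spaces up to bounded error,'' is in fact the most delicate part of the argument: the paper proves it by a contradiction involving two successive rescalings with carefully chosen basepoint sequences (one to show the error ratio diverges, another to produce a forbidden quasi-isometric embedding of a half-space of $\mathbb R^2$ into a fiber), and this cannot be waved away as a soft consequence of tree-gradedness.
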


 In particular, the set of the quasi-isometry types of the fundamental groups of the pieces of an irreducible manifold is a quasi-isometry invariant of the fundamental group of the manifold. 
 Since Seifert manifolds with non-empty boundary and cusped hyperbolic $3$-manifolds cannot have quasi-isometric fundamental groups
 (see Corollary~\ref{aaa}), it readily follows that the presence of a hyperbolic and/or
 a Seifert piece is a quasi-isometry invariant of the fundamental group (another proof of the fact that quasi-isometries detect the presence of a hyperbolic piece was given by
 Gersten~\cite{Gersten}, who showed that $M$ contains a hyperbolic piece if and only if the divergence of $\pi_1(M)$ is exponential).

 Regarding quasi-isometric rigidity, in the same paper Kapovich and Leeb proved the following:
 
 \begin{thm}[\cite{kapleenew}]\label{KL2}
  The class of fundamental groups of closed non-geometric irreducible $3$-manifolds is QI-rigid.
 \end{thm}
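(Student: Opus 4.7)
The plan is the following. Let $\G$ be a group quasi-isometric to $\pi_1(M)$, where $M$ is a closed non-geometric irreducible $3$-manifold; we aim to produce a closed non-geometric irreducible $3$-manifold $M'$ whose fundamental group is virtually isomorphic to $\G$. By the Milnor--Svarc Lemma and Proposition~\ref{quasiact:prop}, $\G$ admits a geometric quasi-action by quasi-isometries on $\widetilde{M}$. The universal cover $\widetilde{M}$ carries a canonical tree-of-spaces structure whose underlying Bass--Serre tree $T$ encodes the geometric decomposition of $M$: vertex spaces are universal covers of the pieces (truncated along horospheres in the hyperbolic case), and edge spaces are the lifts of the decomposition tori, each a copy of $\mathbb{R}^2$.

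The first step is to apply Theorem~\ref{kaplee} uniformly across the quasi-action. This forces each quasi-isometry in the quasi-action to coarsely permute vertex and edge spaces, yielding an induced quasi-action of $\G$ on $T$. The second step is to promote this quasi-action on a bounded-valence bushy tree to a genuine simplicial action of $\G$ on a tree $T'$ equipped with a $\G$-coarsely-equivariant quasi-isometry $T'\to T$; this is the standard upgrade available for cobounded quasi-actions on trees. Bass--Serre theory applied to $T'$ then expresses $\G$ as the fundamental group of a finite graph of groups, whose vertex groups $\G_v$ are quasi-isometric to the corresponding vertex spaces of $\widetilde{M}$ and whose edge groups are virtually $\mathbb{Z}^2$.

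Each vertex space is either the complement of an equivariant family of horoballs in $\mathbb{H}^3$ or the universal cover of a Seifert piece with toric boundary. The quasi-isometric rigidity of cusped hyperbolic $3$-manifolds (Schwartz, see~\cite{Schwartz}) and the analogous rigidity for Seifert pieces with boundary (obtained from the classification of quasi-actions on $\mathbb{H}^2\times\mathbb{R}$ together with their peripheral $\mathbb{Z}^2$-pattern) imply that each $\G_v$ is virtually isomorphic to the fundamental group of a concrete piece $P_v$ with toric boundary; moreover, the identification matches peripheral $\mathbb{Z}^2$-subgroups of $\G_v$ with fundamental groups of boundary tori of $P_v$. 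The edge groups of $\G$ then prescribe the gluing data between these peripheral subgroups.

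The main obstacle is the final assembly step. After passing to a common finite-index torsion-free subgroup, one has to show that the abstract matchings of peripheral $\mathbb{Z}^2$-subgroups prescribed by the edge groups can actually be realized by homeomorphisms between boundary tori of the $P_v$, yielding a genuine closed $3$-manifold $M'$ whose JSJ graph of groups reproduces the one of $\G$. The delicate point is to check that the peripheral matchings supplied by Theorem~\ref{kaplee} are compatible with an orientation-reversing toral gluing at each edge; once this is done, the resulting $M'$ is automatically irreducible (all gluings occur along incompressible tori) and non-geometric (the graph is non-trivial and the pieces are not modeled on $Sol$), and $\pi_1(M')$ is by construction virtually isomorphic to $\G$.
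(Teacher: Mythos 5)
Your outline follows the paper's high-level strategy — geometric quasi-action on $\widetilde M$, Theorem~\ref{kaplee} forces the quasi-action to respect the tree-of-spaces structure, Bass--Serre theory, QI-rigidity of the individual pieces — but two steps are either wrong or left open.

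First, the step where you ``promote the quasi-action on a bounded-valence bushy tree to a genuine simplicial action'' is both misapplied and unnecessary. The Bass--Serre tree $T$ encoding the chamber/wall structure of $\widetilde M$ is \emph{not} bounded valence (a chamber generally has infinitely many walls), so the quasi-action-on-trees machinery of Mosher--Sageev--Whyte does not apply in the form you invoke. More importantly, you do not need it: by Theorem~\ref{main1:thm}, each quasi-isometry in the quasi-action on $\widetilde M$ already induces an honest automorphism of $T$, and since distinct chambers and distinct walls lie at infinite Hausdorff distance, the defect in the quasi-action axioms (bounded error in composition) is invisible at the level of $T$. Hence $\G\to\mathrm{Aut}(T)$ is a genuine homomorphism from the start, with no promotion step. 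One only needs to pass to an index $\le 2$ subgroup $\G^0$ to avoid inversions.

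Second — and this is the more serious gap — you correctly flag the final assembly as ``the main obstacle'' but do not resolve it, and the route you sketch (pass to a common finite-index torsion-free subgroup, then directly glue the pieces $P_v$ along tori) is not how the argument closes. Passing to a torsion-free finite-index subgroup destroys the control on the graph of groups and still leaves unaddressed whether the abstract peripheral matchings prescribed by the edge groups are topologically realizable. The paper's resolution exploits the extra structure established in the piece-level analysis: the unique maximal finite normal subgroup $F$ is the \emph{same} in every vertex and edge stabilizer, so it is the kernel of the $\G^0$-action on $T$; quotienting by $F$ yields a group $\G'$ whose vertex stabilizers are fundamental groups of hyperbolic or Seifert $3$-\emph{orbifolds} with boundary, and the peripheral identifications given by the edge stabilizers now \emph{do} assemble, according to the combinatorics of $T/\G^0$, into a $3$-orbifold with fundamental group $\G'$. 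One then invokes the McCullough--Miller theorem~\cite{MM} to pass to a finite manifold cover, producing the closed non-geometric irreducible $3$-manifold $M'$ with $\pi_1(M')$ virtually isomorphic to $\G$. Without this orbifold intermediary (or some equivalent device), the realizability of the gluings you need is precisely what is left unproved.
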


 Theorems~\ref{kaplee} and~\ref{KL2} will be the main object of this survey. Following Kapovich and Leeb, we will make an extensive use of asymptotic cones, a very useful quasi-isometry invariant
 of geodesic spaces introduced by Gromov and studied by a number of mathematicians in the last decades. Nowadays, asymptotic cones of groups themselves are the subject of 
 an independent research field. We will introduce them and briefly describe
 their properties in Section~\ref{asymptotic:sec}.

 The following result by Behrstock and Neumann shows that Theorem~\ref{kaplee} may be strengthened in the case of \emph{graph manifolds}, i.e.~of irreducible $3$-manifolds whose geometric decomposition is not trivial and does not contain any hyperbolic piece.
 
 \begin{thm}[\cite{BN1}]\label{BN1}
  Let $M,M'$ be closed non-geometric graph manifolds. Then $\pi_1(M)$ is quasi-isometric to $\pi_1(M')$. 
 \end{thm}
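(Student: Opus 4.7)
The plan is to establish the theorem by showing that the universal covers $\widetilde M$ and $\widetilde{M'}$ are bi-Lipschitz equivalent; by the Milnor--Svarc Lemma (Proposition~\ref{milsv}) this will force $\pi_1(M)$ and $\pi_1(M')$ to be quasi-isometric. I would first equip each graph manifold with a Riemannian metric so that every Seifert piece $N_i$ is locally isometric to $\mathbb{H}^2\times\mathbb{R}$ and the JSJ tori are flat totally geodesic. Then $\widetilde M$ inherits a natural tree-of-spaces decomposition whose vertex spaces are isometric to $(\mathbb{H}^2\setminus\bigcup_j\mathrm{int}\,B_j)\times\mathbb{R}$, with $\{B_j\}$ a $\pi_1(\Sigma_i)$-equivariant family of disjoint open horoballs (one per boundary component of the base surface $\Sigma_i$), and whose edge spaces are Euclidean planes. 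The underlying Bass-Serre tree $T$ has countably infinite valence at every vertex.

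The first substantive step is to reduce to a uniform local model by passing to finite covers. Every closed non-geometric graph manifold admits a finite cover that is a \emph{flip manifold}: each Seifert piece is a trivial circle bundle $\Sigma_i\times S^1$ over a compact hyperbolic surface with geodesic boundary, and across every JSJ torus the gluing interchanges the $S^1$-fiber direction on one side with a primitive boundary curve of the base surface on the other side. Since virtual isomorphism implies quasi-isometry (Remark~\ref{vi-qi}), it suffices to prove the theorem for pairs of flip manifolds, for which all vertex spaces of $\widetilde M$ and $\widetilde{M'}$ are pairwise isometric and all edge-space inclusions have the same combinatorial type.

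Next I would construct an explicit bi-Lipschitz equivalence $\widetilde M\to\widetilde{M'}$ by induction over the Bass-Serre tree. Both Bass-Serre trees are countable and uniformly of countably infinite valence, so a back-and-forth argument produces a combinatorial bijection between them. Starting from a basepoint vertex in each, I map the associated vertex spaces by the obvious isometry and then extend outward one vertex space at a time, each time precomposing by a bi-Lipschitz self-homeomorphism of $(\mathbb{H}^2\setminus\bigcup_j\mathrm{int}\,B_j)\times\mathbb{R}$ chosen to reconcile the parametrization of the newly attached edge plane with the image already specified on that plane by the preceding step.

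The main obstacle is controlling the \emph{shear} along JSJ planes: even after the flip reduction, the way an edge plane sits as the boundary of its two adjacent vertex spaces depends on gluing parameters that are not matched between $M$ and $M'$. Absorbing this discrepancy requires bi-Lipschitz self-maps of $(\mathbb{H}^2\setminus\bigcup_j\mathrm{int}\,B_j)\times\mathbb{R}$ that realize arbitrary $\mathbb{Z}^2$-shears on one prescribed boundary plane while being the identity on every other boundary plane, with a bi-Lipschitz constant independent of the shear. Constructing such a family of maps, and verifying that gluing countably many of them along $T$ yields a globally bi-Lipschitz map with uniform constants rather than distortion that accumulates as one moves outward in the tree, is the heart of the Behrstock--Neumann argument.
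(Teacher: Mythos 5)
The survey under review does not give a proof of this theorem; it simply cites Behrstock--Neumann~\cite{BN1}. So the comparison has to be against the original argument in~\cite{BN1}, and your outline does reproduce its broad strategy: pass to a finite \emph{flip} cover so that every Seifert piece is a product $\Sigma\times S^1$ and adjacent pieces interchange fiber and base directions, identify all vertex spaces of the tree of spaces with a common model, and build a bi-Lipschitz equivalence $\widetilde M\to\widetilde{M'}$ inductively over the Bass--Serre tree, using that both trees have countably infinite valence at every vertex. (One small inconsistency: you geometrize the base surfaces with geodesic boundary, in which case the vertex spaces are $\mathbb{H}^2$ minus an equivariant family of open \emph{half-planes}, not horoballs; the survey itself makes this point in its discussion of Seifert pieces. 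This is harmless for the argument.)

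The genuine gap is exactly where you flag it. The entire content of the theorem is concentrated in the lemma you state but do not prove: that there exist bi-Lipschitz self-homeomorphisms of the truncated $\mathbb{H}^2\times\mathbb{R}$ model which realize an arbitrary prescribed affine shear on a single chosen boundary plane, restrict to the identity on every other boundary plane, and --- crucially --- have bi-Lipschitz constant bounded independently of the shear. Without that uniformity, the inductive construction over the tree does not close: a priori the constants could degrade without bound as one moves away from the basepoint, and then the resulting map on $\widetilde M$ is merely a homeomorphism, not a quasi-isometry. Establishing this uniform shear-absorption lemma (Behrstock--Neumann do it via an explicit construction on what they call a fattened tree, and then take the product with $\mathbb{R}$) is the crux of~\cite{BN1}, and must also be paired with an argument that the countably many local corrections are compatible along the tree. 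As it stands, your proposal is a faithful sketch of the proof architecture, but the theorem's actual mathematical content remains a black box.
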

 
 Another big progress towards the classification of quasi-isometry types of non-geometric irreducible $3$-manifolds has been done in a subsequent paper by Behrstock and Neumann~\cite{BN2}, where the case when at least one of the hyperbolic
 pieces of the decomposition is \emph{not} arithmetic is completely addressed. Since their results cannot be properly stated without introducing a bit of terminology, and since
 our attention in this paper will be primarily concentrated on Theorems~\ref{kaplee} and~\ref{KL2} above, we address the reader to~\cite{BN2} for the precise statements. 

Before jumping into the realm of higher dimensional manifolds, let us stress the fact that all the theorems stated in this subsection make an essential use of the geometry supported by the pieces in which an irreducible manifold decomposes.
While in the case of the prime decomposition of $3$-manifolds the fact that quasi-isometries recognize summands can be deduced from more general facts regarding the quasi-isometry classification of free products,
no general theorem on amalgamated products is known which can be exploited to show that quasi-isometries also detect
the pieces of the geometric decomposition of irreducible $3$-manifolds. 
During the last 25 years the notion of JSJ decomposition has been extended from the case of (fundamental groups of) irreducible $3$-manifolds to the context of finitely presented groups. 
The JSJ decomposition was first defined by Sela for torsion-free word-hyperbolic groups~\cite{Sela}.
After Bowditch developed the theory for general word-hyperbolic groups \cite{Bow}, 
other notions of JSJ decomposition were introduced by 
Rips and Sela~\cite{RS}, Dunwoody and Sageev~\cite{DunSa}, Scott and Swarup~\cite{SS}, and 
  Fujiwara and Papasoglu~\cite{FP} in the general context of finitely presented groups.
However, uniqueness of such decompositions up to isomorphism is already  delicate (see e.g.~Forester \cite{Forester} and Guirardel and Levitt \cite{GL}), and no quasi-isometry invariance result may be directly applied
to the case we are interested in. Moreover,
in many versions of the theory the outcome of the decomposition is a graph of groups with two-ended (i.e.~infinite virtually cyclic) subgroups as edge groups, rather than rank-2 free abelian edge groups
like in the classical case. We refer the reader e.g.~to~\cite{Pap,Papas,MSW1,MSW2} for some approaches  to quasi-isometry invariance of decompositions of groups as amalgamated products.

\subsection{A glimpse to the higher-dimensional case}
In dimension greater than three generic manifolds do not admit decompositions into pieces with controlled geometry. Therefore, in order to study to what extent quasi-isometries capture the geometry of
the pieces of a manifold obtained by gluing ``geometric pieces'' one first need to define the appropriate class of objects to work with.
For example, Nguyen Phan defined in~\cite{Nguyen1,Nguyen2}
classes of manifolds obtained by gluing non-positively curved finite-volume locally symmetric spaces with cusps removed. In the cited papers, Nguyen Phan proved (smooth) rigidity results for manifolds obtained this way. 
In the context of non-positively curved manifolds, Leeb and Scott defined a canonical decomposition  along embedded flat manifolds which is meant to generalize to higher dimensions the JSJ decomposition of irreducible $3$-manifolds~\cite{LS}. 
A different class of manifolds, called \emph{higher dimensional graph manifolds}, was defined by Lafont, Sisto and the author in~\cite{FLS} as follows.

\begin{defn}
A compact smooth $n$-manifold $M$, $n\geq 3$, is a \emph{higher dimensional graph manifold} (or \emph{HDG manifold} for short) 
if it can be constructed in the following way:
\begin{enumerate}
\item
For every $i=1,\ldots, r$, take a complete finite-volume
non-compact hyperbolic $n_i$-manifold $N_i$ 
with toric cusps, where $3\leq n_i\leq n$.
\item
Denote by $\overline{N}_i$ the manifold obtained by ``truncating the cusps'' of $N_i$, i.e.~by removing
from $N_i$ a horospherical neighbourhood of each cusp.
\item
Take the product $V_i=\overline{N}_i\times T^{n-n_i}$, where $T^k=(S^1)^k$ is the $k$-dimensional torus. 
\item
Fix a pairing of some boundary components of the $V_i$'s and glue the paired
boundary components via diffeomorphisms, so as to obtain a connected manifold of
dimension $n$.
\end{enumerate}
Observe that $\partial M$ is either empty or consists of tori. The submanifolds $V_1,\ldots,V_r$ are
the \emph{pieces} of $M$. The manifold $\overline{N}_i$ is the \emph{base} of $V_i$, while every
subset of the form $\{\ast\}\times T^{n-n_i}\subseteq V_i$ is a \emph{fiber} of $V_i$. The boundary
tori which are identified together are the \emph{internal walls} of $M$ (so any two distinct pieces in $M$
are separated by a collection of internal walls), while the components of $\partial M$ are the \emph{boundary walls} of $M$.
\end{defn}

Therefore, HDG manifolds can be decomposed into pieces, each of which supports 
a finite-volume product metric locally modeled on some $\mathbb{H}^k \times \mathbb{R} ^{n-k}$ ($k\geq 3$).

\begin{rem}
In the original definition of HDG manifold the gluing diffeomorphisms between the paired tori are
required to be affine with respect to the canonical affine structure that is defined on the boundary components of the pieces. Without this restriction,
no smooth rigidity could hold for HDG manifolds. Nevertheless, the quasi-isometric rigidity results we are investigating here are not affected by allowing generic diffeomorphisms
as gluing maps.
\end{rem}

Probably the easiest (and somewhat less interesting) examples of HDG manifolds are the so-called \emph{purely hyperbolic} HDG manifolds, i.e.~those HDG manifolds whose pieces are all just truncated hyperbolic manifolds. 
Such manifolds enjoy additional nice properties, that are of great help in understanding
their geometry: for example, they support non-positively curved Riemannian metrics (at least when the gluings are affine),
and their fundamental groups are relatively hyperbolic~\cite{FLS}. Examples of purely hyperbolic HDG manifolds include both the classical ``double'' of a finite volume 
hyperbolic manifold with toric cusps, as well as twisted doubles of such manifolds (in the sense of 
Aravinda and Farrell \cite{ArFa}).

A number of rigidity results (like smooth rigidity \`a la Mostow within the class of HDG manifolds obtained via affine gluings and topological rigidity \`a la Farrell--Jones within the class of aspherical manifolds)
are proved in~\cite{FLS} for generic HDG manifolds. In order to get quasi-isometric rigidity, however, a further assumption is needed, which ensures that the (fundamental group of) each piece
is undistorted in the fundamental group of the manifold. 
Let $M$ be a HDG graph manifold, and $V^+, V^-$ a pair of adjacent (not necessarily distinct) pieces of $M$. 
We say that the two pieces have {\it transverse fibers} along the common internal wall $T$ provided that, under the gluing diffeomorphism 
$\psi: T^+ \rightarrow T^-$ of the paired boundary tori corresponding to $T$, the image of the fiber subgroup of $\pi_1(T^+)$ under $\psi_*$ intersects 
the fiber subgroup of $\pi_1(T^-)$ only in $\{0\}$.

\begin{defn}\label{irr:def}
A HDG manifold is {\it irreducible} if every pair of adjacent
pieces has transverse fibers along every common internal wall.
\end{defn}

In the case of $1$-dimensional fibers (and when restricting only to affine gluings between the pieces), a HDG graph manifold is irreducible
if and only if the $S^1$-bundle structure on each piece cannot be extended
to the union of adjacent pieces, a phenomemon which always occurs in $3$-dimensional graph manifolds built up by products of
hyperbolic surfaces times $S^1$. This suggests that irreducible HDG manifolds should provide a closer analogue to $3$-dimensional graph manifolds
than generic HDG manifolds.

Every purely hyperbolic HDG manifold
is  irreducible. However, the class of irreducible HDG manifolds is much richer than the class of purely hyperbolic ones:
for example, in each dimension $n\geq 4$, there exist infinitely many irreducible HDG $n$-manifolds
which do not support any locally CAT(0) metric, in contrast with the fact that every purely hyperbolic HDG manifold (obtained via affine gluings)
supports a non-positively curved Riemannian metric.

In this paper we will be mainly concerned with the behaviour of the fundamental groups of HDG manifolds with respect to quasi-isometries. Using the technology of asymptotic cones 
(together with other tools, of course), in Section~\ref{QIrigidity:sec} we will sketch the proof of the following results, which are taken from~\cite{FLS}.
The following result provides a higher-dimensional analogue of Theorem~\ref{kaplee}:

\begin{thm}\label{qi-preserve:thm}
Let $M_1$, $M_2$ be a pair of irreducible HDG manifolds, and $\G_i=\pi_1(M_i)$ their respective
fundamental groups.  Let $\Lambda_1 \leq \G_1$ be a subgroup conjugate to the fundamental
group of a piece in $M_1$, and $\varphi\colon: \G_1\rightarrow \G_2$ be a quasi-isometry. 
Then, the set $\varphi(\Lambda_1)$ is within finite Hausdorff distance from
a conjugate of $\Lambda_2 \leq \G_2$, where $\Lambda_2$ is the fundamental group of a piece  in $M_2$.
\end{thm}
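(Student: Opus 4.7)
The plan is to mimic the cone-level approach of Kapovich--Leeb from Theorem~\ref{kaplee}, upgraded to the higher-dimensional setting where the pieces are products of truncated hyperbolic manifolds with tori. By the Milnor--Svarc lemma, the quasi-isometry $\varphi$ is (coarsely) conjugate to a quasi-isometry $\widetilde\varphi\colon \widetilde M_1 \to \widetilde M_2$ between universal covers, and under this identification $\Lambda_1$ corresponds to a lift $\widetilde V_1\subseteq \widetilde M_1$ of a piece of $M_1$. Therefore, it suffices to show that $\widetilde\varphi(\widetilde V_1)$ lies within finite Hausdorff distance of the lift of some piece of $M_2$.

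First I would pass to asymptotic cones. Because $M_i$ is irreducible, the transverse fibers hypothesis ensures (via a normal-form argument on the Bass--Serre tree of pieces) that each lifted piece is undistorted in $\widetilde M_i$, so its ultralimit embeds bi-Lipschitzly in any asymptotic cone $\widetilde M_i^\omega$. The cone then acquires a natural tree-graded-type decomposition: the maximal ``piece subsets'' are ultralimits of piece lifts, of the form $C_\omega(\widetilde{\overline N}_j) \times \mathbb{R}^{n-n_j}$, where the first factor is itself a tree-graded space whose pieces are $(n_j-1)$-dimensional flats (coming from the Drutu--Sapir description of cones of relatively hyperbolic groups applied to $\pi_1(N_j)$). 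Adjacent piece subsets meet along $(n-1)$-dimensional flats which are ultralimits of the walls.

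The main technical step is an intrinsic characterization of the piece subsets inside $\widetilde M_i^\omega$ that is visibly preserved by any bi-Lipschitz homeomorphism. The walls should be identified as the maximal $(n-1)$-dimensional flats which locally separate $\widetilde M_i^\omega$, and each piece subset as a maximal connected region whose ``maximal Euclidean de Rham factor'' has a prescribed dimension $n-n_j$, with the non-flat factor carrying genuine tree-graded branching. Irreducibility enters crucially here: if the fibers of two adjacent pieces were parallel, the union of the corresponding piece subsets would assemble into a single larger product substructure with a strictly bigger Euclidean factor, and the characterization would fail. Granted this characterization, the induced bi-Lipschitz homeomorphism on cones must send piece subsets to piece subsets.

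Finally, translating back from cones to groups is routine: applying the cone-level statement for every rescaling sequence and every non-principal ultrafilter, together with a standard coarse Baire-type argument (as in Kapovich--Leeb), one concludes that $\widetilde\varphi(\widetilde V_1)$ is at finite Hausdorff distance from a single lift $\widetilde V_2$, which yields the theorem. The main obstacle is precisely this intrinsic characterization: one must show that irreducibility forces distinct piece subsets to meet only along walls rather than merging into a larger flat-like product, which is the higher-dimensional analogue of the separation arguments used by Kapovich--Leeb in their Seifert versus hyperbolic dichotomy.
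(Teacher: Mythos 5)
Your proposal follows the same broad strategy as the paper (Milnor--Svarc reduction to universal covers, pass to asymptotic cones, characterize a distinguished structure intrinsically, show bi-Lipschitz maps preserve it, and translate back). But the crucial step --- the intrinsic characterization in the cone --- is where you acknowledge the ``main obstacle,'' and the way you propose to resolve it (maximal $(n-1)$-flats that locally separate, plus a ``maximal Euclidean de Rham factor'' criterion for piece subsets) is not how the paper does it and leaves a genuine gap.

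The paper's resolution is cleaner and hinges on a dimension count you don't invoke. Since each piece is $\overline N_i\times T^{n-n_i}$ with $n_i\geq 3$, fibers have dimension $\leq n-3$, hence codimension $\geq 2$ inside any $(n-1)$-flat. Lemma~\ref{fundamental:lem} then uses Alexander duality (Dold's theorem that complements of homeomorphic closed subsets of $\mathbb{R}^{n-1}$ have the same homology) to conclude that a bi-Lipschitz $(n-1)$-flat $A\subseteq X_\omega$ cannot be disconnected by removing an $\omega$-fiber. Combined with the fiber-separation machinery (Lemma~\ref{fiber:intersection:lem}, where irreducibility enters via transversality of fibers, and Corollary~\ref{inters:cor}) and Proposition~\ref{utile}, this shows that \emph{every} bi-Lipschitz $(n-1)$-flat in $X_\omega$ is an $\omega$-wall (Proposition~\ref{wall:char:prop}), with no appeal to maximality, local separation, or de Rham-type product decompositions. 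Your de Rham-factor idea would require you to first prove that the relevant product structure is a bi-Lipschitz invariant of the cone, which is exactly the kind of statement you would need the flat-classification for in the first place. Also, the step back from cones to spaces (Theorem~\ref{wall:thm}) is not a Baire-type argument but a careful rechoosing of basepoints and rescaling sequences to produce a contradictory quasi-isometric embedding of a half-space into a fiber; the paper explicitly flags this as the one place where the dependence on ultrafilter/scaling choices matters, so it should not be dismissed as routine.
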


After one knows that
fundamental groups of the pieces are essentially mapped to fundamental group of the pieces under quasi-isometries, the next goal is
to understand the behavior of groups quasi-isometric to the fundamental group of a piece. 

\begin{thm}\label{product:thm}
Let $N$ be a complete finite-volume hyperbolic $m$-manifold, $m\geq 3$, and
let $\Gamma$ be a finitely generated group quasi-isometric to
$\pi_1 (N)\times\mathbb{Z}^d$, $d\geq 0$. 
Then there exist a finite-index subgroup $\Gamma'$ of $\Gamma$,
a finite-sheeted covering $N'$ of $N$, a group $\Delta$  and a finite group $F$ 
such that the following short exact sequences hold:
\begin{equation*}
\xymatrix{
1\ar[r] &\mathbb{Z}^d \ar[r]^j & \Gamma' \ar[r] & \Delta \ar[r] & 1,\\
}
\end{equation*}
\begin{equation*}
\xymatrix{
1\ar[r] & F \ar[r] & \Delta\ar[r] & \pi_1 (N')\ar[r] & 1 .
}
\end{equation*}
Moreover, $j(\mathbb{Z}^d)$ is contained in the center of $\Gamma'$.
In other words, $\Gamma'$ is a central extension by $\mathbb{Z}^d$ 
of a finite extension of $\pi_1 (N')$.
\end{thm}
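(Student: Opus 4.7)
My plan is to peel off a central $\mathbb{Z}^d$ subgroup from a finite-index subgroup of $\Gamma$, and then apply Schwartz's quasi-isometric rigidity theorem for non-uniform lattices in $\mathrm{Isom}(\mathbb{H}^m)$ (valid since $m\geq 3$) to the resulting quotient. Let $X$ denote the neutered space obtained from $\mathbb{H}^m$ by removing a $\pi_1(N)$-equivariant collection of open horoballs, so that $Y:=X\times\mathbb{R}^d$ is a proper geodesic space on which $\pi_1(N)\times\mathbb{Z}^d$ acts geometrically by the Milnor--Svarc lemma; combined with the given quasi-isometry, $\Gamma$ inherits a geometric quasi-action on $Y$.

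The central geometric step is a \emph{product rigidity} statement: every self-quasi-isometry of $Y$ is at bounded distance from a product $(q_1,q_2)$ of a self-QI $q_1$ of $X$ and a self-QI $q_2$ of $\mathbb{R}^d$. I would prove this at the level of asymptotic cones. The cone $\mathrm{Cone}_\omega(Y)$ splits isometrically as $\mathrm{Cone}_\omega(X)\times\mathbb{R}^d$, and $\mathrm{Cone}_\omega(X)$ is a tree-graded space whose pieces are copies of $\mathbb{R}^{m-1}$, arising from the horospherical boundary components of $X$. One checks that $\mathrm{Cone}_\omega(X)$ admits no nontrivial Euclidean direct factor, so $\mathbb{R}^d$ is the maximal Euclidean direct factor of $\mathrm{Cone}_\omega(Y)$. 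By uniqueness of this maximal factor, every bi-Lipschitz self-homeomorphism of $\mathrm{Cone}_\omega(Y)$ preserves the $\mathbb{R}^d$ summand up to an affine reparametrization, and de-ultrafiltering yields the desired product decomposition of self-QIs of $Y$.

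Granted product rigidity, the first-factor projection of the quasi-action of $\Gamma$ on $Y$ yields a geometric quasi-action $\rho_1\colon\Gamma\curvearrowright X$. Since $X$ is quasi-isometric to $\pi_1(N)$, Schwartz's QI-rigidity theorem~\cite{Schwartz,Schwartz2} provides a finite-index subgroup $\Gamma_0\leq\Gamma$, a finite normal subgroup $F\lhd\Gamma_0$, and a finite cover $N'$ of $N$ (after replacing $N'$ by a common finite cover if necessary) such that $\Gamma_0/F\cong\pi_1(N')$, giving the second short exact sequence with $\Delta:=\Gamma_0/F$. Let $Z\leq\Gamma_0$ be the subgroup of those elements whose action on the $X$-factor is bounded; a direct check using the uniform bi-Lipschitz constants of the quasi-action shows $Z$ is normal in $\Gamma_0$, the quotient $\Gamma_0/Z$ still quasi-acts geometrically on $X$, and $Z$ itself quasi-acts geometrically on $\mathbb{R}^d$, so $Z$ is virtually $\mathbb{Z}^d$. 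Passing to a deeper finite-index subgroup $\Gamma'\leq\Gamma_0$, we may assume $Z\cap\Gamma'\cong\mathbb{Z}^d$. The conjugation action of $\Gamma'$ on $Z\cap\Gamma'$ coincides with the linear part of the quasi-action of $\Gamma'$ on $\mathbb{R}^d$, and thus takes values in the discrete integral matrices $GL(d,\mathbb{Z})$ while simultaneously lying in a bounded subset of $GL(d,\mathbb{R})$ (by the uniform bi-Lipschitz constants of the quasi-action); it therefore has finite image, and passing to its kernel makes $Z\cap\Gamma'$ central, yielding the first short exact sequence $1\to\mathbb{Z}^d\to\Gamma'\to\Delta\to 1$.

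The main obstacle is the product rigidity step --- showing that the $\mathbb{R}^d$ factor of $Y$ is canonically detected at the QI level. The subtlety is that within any single piece of $\mathrm{Cone}_\omega(Y)$ (itself isometric to $\mathbb{R}^{m-1+d}$) the splitting $\mathbb{R}^{m-1}\times\mathbb{R}^d$ is not intrinsically visible; it must be recovered from the global tree-graded structure on $\mathrm{Cone}_\omega(X)$ and the fact that the $\mathbb{R}^d$ factor sits parallel across all pieces. Making this rigorous requires a uniqueness result for maximal Euclidean direct factors of the asymptotic cone, together with a careful matching of the geometric product decomposition with the algebraic central extension in $\Gamma'$.
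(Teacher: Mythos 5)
Your overall architecture mirrors the paper's: realize $\pi_1(N)\times\mathbb{Z}^d$ as acting on $B\times\mathbb{R}^d$ (neutered space times Euclidean factor), transfer a geometric quasi-action of $\Gamma$ onto that space, coarsely split the quasi-action along the product, project to $B$, invoke Schwartz's rigidity (Lemma 6.1 of~\cite{Schwartz}), and identify the kernel with a virtually-$\mathbb{Z}^d$ undistorted normal subgroup that is then made central on a finite-index subgroup. The algebraic tail of your argument is essentially the paper's: your $Z$ is the paper's $\ker\theta$, and your discrete-intersected-with-bounded argument in $GL(d,\mathbb{R})$ is one way of getting what the paper cites as~\cite[Proposition 9.10]{FLS} (abelian undistorted normal subgroups are virtually central), modulo the additional verification that conjugation on $Z\cap\Gamma'$ really does agree with the ``linear part'' of the quasi-action on the $\mathbb{R}^d$ factor.

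The genuine gap is in the step you yourself flag: product rigidity via ``uniqueness of the maximal Euclidean direct factor'' of the asymptotic cone. For bi-Lipschitz self-homeomorphisms this is not an available tool. Isometric de Rham--type uniqueness statements do not survive passage to bi-Lipschitz maps, and within a single piece of $\mathrm{Cone}_\omega(Y)$ (a Euclidean space $\mathbb{R}^{m-1+d}$) the splitting $\mathbb{R}^{m-1}\times\mathbb{R}^d$ is simply not a bi-Lipschitz invariant. What makes the global argument work, and what your sketch does not supply, is the characterization of $\omega$-walls proved in the paper: Lemma~\ref{fundamental:lem} uses a homological fact (complements of homeomorphic closed subsets of $\mathbb{R}^{n-1}$ have the same singular homology, after Dold) to show that a bi-Lipschitz $(n-1)$-flat cannot be disconnected by an $\omega$-fiber, and Proposition~\ref{wall:char:prop} then identifies bi-Lipschitz $(n-1)$-flats with $\omega$-walls; it is crucial here that the fibers have codimension at least $2$ in the flats, which is exactly why the hypothesis $m\geq 3$ matters. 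Once walls are preserved (Theorem~\ref{wall:thm}, obtained by de-ultrafiltering), fiber preservation follows because a fiber is a coarse intersection of two walls (Lemma~\ref{inters:lem}), and only then does one get the well-defined projected quasi-action on $B$. So the route is not through a direct-factor uniqueness theorem; it is through a flat-rigidity statement for the walls combined with the separation structure of the tree-graded cone. Absent that, your projection $\rho_1$ is not shown to exist, and the rest of the argument has nothing to project.

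A second, smaller point: after Schwartz gives you a homomorphism $\theta\colon\Gamma\to\mathrm{Comm}(G)$, identifying the image with a lattice commensurable with $\pi_1(N)$ is not immediate when $N$ is arithmetic (the commensurator is then dense). The paper handles this by showing $\theta(\Gamma)$ preserves a perturbed neutered space $\widehat B$ and is therefore discrete and of finite covolume; you should be aware that passing from ``Schwartz applies'' to ``the quotient is commensurable with $\pi_1(N')$'' requires this extra step rather than a direct citation.
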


In the case of purely hyperbolic pieces, Theorem~\ref{product:thm} reduces to
Schwartz's results on the quasi-isometric rigidity of non-uniform lattices in the group of isometries of real hyperbolic space~\cite{Schwartz}.
The analogous result in the case where $N$ is closed hyperbolic has been
proved by Kleiner and Leeb \cite{klelee2}.

Once Theorems~\ref{qi-preserve:thm} and~\ref{product:thm} are established, it is not difficult to put the pieces together to obtain the following:

\begin{thm}\label{qirigidity:thm}
Let $M$ be an irreducible HDG $n$-manifold obtained by gluing the pieces
$V_i=\overline{N}_i\times T^{d_i}$, 
$i=1,\ldots, k$. Let $\Gamma$ be a group quasi-isometric
to $\pi_1 (M)$. Then either $\Gamma$ itself or a subgroup of $\Gamma$ of index two is isomorphic
to the fundamental group of a graph of groups satisfying the following conditions:
\begin{itemize}
\item
every edge group contains $\mathbb{Z}^{n-1}$ as a subgroup
of finite index;
\item
for every vertex group $\G_v$ there exist $i\in\{1,\ldots, k\}$,
a finite-sheeted covering $N'$ of $N_i$ and a finite-index subgroup
$\G'_v$ of $\G_v$ that fits into the exact sequences
$$
\xymatrix{
1\ar[r] &\mathbb{Z}^{d_i} \ar[r]^j & \Gamma_v' \ar[r] & \Delta \ar[r]& 1,\\
}
$$
$$
\xymatrix{
1\ar[r] & F \ar[r] & \Delta\ar[r] & \pi_1 (N')\ar[r] & 1 ,
}
$$
where $F$ is a finite group, and $j(\mathbb{Z}^{d_i})$ is contained
in the center of $\Gamma'_v$.
\end{itemize}
\end{thm}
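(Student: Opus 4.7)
The plan is to promote the quasi-isometry $\varphi\colon \Gamma \to \pi_1(M)$ to a quasi-action of $\Gamma$ on $\pi_1(M)$ (equivalently, on the universal cover $\widetilde{M}$ via Milnor--Svarc), and then convert this into a genuine action on the Bass--Serre tree $T$ of the graph-of-groups decomposition of $\pi_1(M)$ dual to the piece decomposition of $M$. First, by a standard construction, translation on $\Gamma$ together with $\varphi$ produces a cobounded, proper, isometric quasi-action of $\Gamma$ on $\pi_1(M)$. Theorem~\ref{qi-preserve:thm} tells us that this quasi-action permutes, up to uniformly bounded Hausdorff distance, the left cosets of the conjugates of the piece subgroups $\pi_1(V_i)\leq \pi_1(M)$. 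Because the walls of $\widetilde M$ are precisely the coarse intersections of pairs of adjacent piece-lifts, the quasi-action of $\Gamma$ also permutes the collection of wall-cosets and preserves the incidence ``wall lies on piece''. This combinatorial data is exactly the vertex/edge structure of $T$, so we obtain a homomorphism $\rho\colon \Gamma \to \mathrm{Aut}(T)$.

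Next I would pass to an index-at-most-two subgroup $\Gamma_0\leq \Gamma$ to kill edge inversions in $\rho$ (each edge of $T$ separates two pieces, and swapping them can only be realized by an orientation-reversing symmetry of the wall; an index-two subgroup trivializes this $\mathbb{Z}/2$-issue). On $\Gamma_0$ the action $\rho$ is without inversions, so Bass--Serre theory realizes $\Gamma_0$ as the fundamental group of the quotient graph of groups $\Gamma_0\backslash T$, with vertex groups the $\rho$-stabilizers $\Gamma_v$ of vertices of $T$ and edge groups the stabilizers $\Gamma_e$ of edges. Cocompactness of the $\Gamma$-quasi-action on $\widetilde M$ translates into cocompactness of the action on $T$, so $\Gamma_0\backslash T$ is finite.

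The next task is to identify each stabilizer. A vertex stabilizer $\Gamma_v$ leaves invariant (coarsely) the coset corresponding to $v$, hence acts geometrically (properly and coboundedly) on the associated piece lift, which is quasi-isometric to $\pi_1(V_i) = \pi_1(N_i)\times \mathbb{Z}^{d_i}$. Theorem~\ref{product:thm} then yields the two exact sequences in the statement for a finite-index subgroup $\Gamma_v'\leq \Gamma_v$. Similarly, an edge stabilizer $\Gamma_e$ acts geometrically on a wall lift, which is quasi-isometric to $\mathbb{Z}^{n-1}$; by quasi-isometric rigidity of abelian groups (Gromov--Pansu), $\Gamma_e$ is virtually $\mathbb{Z}^{n-1}$, i.e.\ contains $\mathbb{Z}^{n-1}$ as a finite-index subgroup, as required.

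The main technical obstacle is step two: upgrading the quasi-action on the collection of pieces and walls to an honest simplicial action on $T$. One must verify that the induced assignment is truly a homomorphism (not merely a quasi-homomorphism), which is the reason we use the set-theoretic combinatorics of ``piece coset / wall coset up to bounded Hausdorff distance'' rather than any geometric approximation: the equivalence relation ``coarsely coincident'' on cosets is $\Gamma$-invariant and gives a well-defined action on the set of equivalence classes, i.e.\ on $V(T)\sqcup E(T)$. Preservation of the incidence relation is then automatic from Theorem~\ref{qi-preserve:thm}, and cocompactness plus properness are inherited from the geometric quasi-action on $\widetilde M$. Once this is in place, the remaining assembly via Bass--Serre theory and the identification of stabilizers via Theorem~\ref{product:thm} and quasi-isometric rigidity of $\mathbb{Z}^{n-1}$ is essentially formal.
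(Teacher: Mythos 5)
Your proposal is correct and takes essentially the same route as the paper: promote the quasi-isometry to a geometric quasi-action of $\Gamma$ on $\widetilde M$ (Proposition~\ref{quasiact:prop}), use invariance of the chamber/wall decomposition (Theorem~\ref{main1:thm}) to descend to a genuine simplicial action on the Bass--Serre tree $T$, pass to an index-$\leq 2$ subgroup acting without inversions, and identify vertex and edge stabilizers via Theorem~\ref{product:thm} and QI-rigidity of $\mathbb{Z}^{n-1}$. One small imprecision: the cleanest reason an index-$\leq 2$ subgroup eliminates inversions is that any tree $T$ is bipartite in a way unique up to swapping sides, so the subgroup of $\Gamma$ preserving the bipartition has index at most two and cannot invert an edge; the ``orientation-reversing symmetry of the wall'' heuristic you sketch would need a bit more justification to show that the non-inverting elements actually form a subgroup.
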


\subsection{Plan of the paper}
In Section~\ref{definitions:sec} we recall the definition of quasi-isometries, we briefly discuss the fundamental Milnor-Svarc Lemma and we state a well-known characterization
of quasi-isometry among groups based on the notion of quasi-action. 
In Section~\ref{asymptotic:sec} we introduce asymptotic cones, and we discuss an asymptotic characterization of (relatively) hyperbolic groups due to Drutu and Sapir that will prove useful in the subsequent sections.
 Being the fundamental groups of manifolds which decompose into pieces, the groups we are interested in are quasi-isometric to trees of spaces, that
 are introduced in Section~\ref{trees:sec}, where their asymptotic cones are also carefully analyzed.
 We come back from homeomorphisms between asymptotic cones to quasi-isometries of the original spaces in Section~\ref{main1:sec}, where we give the proofs of Theorems~\ref{kaplee} and~\ref{qi-preserve:thm}.
 QI-rigidity of the classes of fundamental groups of irreducible non-geometric $3$-manifolds and of irreducible HDG manifolds is discussed in Section~\ref{QIrigidity:sec}.
 In Section~\ref{open} we collect some open questions.
 Finally, the appendix is devoted to the description of the behaviour of quasi-isometries with respect to 
free decompositions of groups as exhaustively illustrated in~\cite{PapaWhyte}, and to the proofs of Theorems~\ref{prime:thm} and~\ref{prime2:thm}.

\section*{Acknowledgments}
Many thanks go to Elia Fioravanti for several interesting conversations on Kapovich and Leeb's proof of the invariance of the JSJ decomposition under quasi-isometries.
The author is indebted to Yves de Cornulier for suggesting a quick proof of the quasi-isometric rigidity of $Nil$ (see Proposition~\ref{8geom:rigidity}).

\section{Quasi-isometries and quasi-actions}\label{definitions:sec}
Let $(X,d),(Y,d')$ be metric spaces and $k\geq 1$, $c\geq 0$ be real numbers. 
A (not necessarily continuous) map $f\colon X\to Y$
is a $(k,c)$-\emph{quasi-isometric embedding} if for every $p,q\in X$ the following inequalities hold:
$$
\frac{d(p,q)}{k}-c\leq d'(f(p),f(q))\leq k\cdot d(p,q)+c.
$$
Moreover, 
a $(k,c)$-quasi-isometric embedding $f$ is a $(k,c)$-\emph{quasi-isometry} if there exists
a $(k,c)$-quasi-isometric embedding $g\colon Y\to X$ such that $d'(f(g(y)),y)\leq c$, 
$d(g(f(x)),x)\leq c$ for every $x\in X$, $y\in Y$. Such a map $g$ is called a \emph{quasi-inverse}
of $f$. It is easily seen that 
a $(k,c)$-quasi-isometric embedding $f\colon X\to Y$ is a $(k',c')$-quasi-isometry for some $k'\geq 1$, 
$c'\geq 0$ if and only if its image is
$r$-dense for some $r\geq 0$, \emph{i.e.}~if every point in $Y$ is at distance at most $r$ from
some point in $f(X)$ (and in this case $k',c'$ only depend on $k,c,r$).

In the introduction we recalled the definition of Cayley graph of a group with respect to a finite set of generators, and 
it is immediate to check that different finite sets of generators for the same group define quasi-isometric 
Cayley graphs, so that every finitely generated group is endowed with a metric which is well-defined
up to quasi-isometry. 
For later reference we recall that left translations induce a well-defined isometric action of every group on any of its Cayley graphs.

The following fundamental result relates the quasi-isometry type of a group to
the quasi-isometry type of a metric space on which the group acts geometrically.
A metric space $X$ is \emph{geodesic} if every two points in it can be joined by a geodesic, i.e.~an isometric embedding of a closed interval. 
An isometric action $\Gamma\times X\to X$ of a group $\Gamma$ on a metric space $X$ is
\emph{proper} if for every bounded subset $K\subseteq X$
the set $\{g\in\Gamma\, |\, g\cdot K\cap K\neq \emptyset \}$ is finite, and \emph{cobounded}
if there exists a bounded subset $Y\subseteq X$ such that $\Gamma\cdot Y=X$ (or, equivalently, if one or equivalently every orbit of $\Gamma$ in $X$ is 
$r$-dense for some $r>0$, which may depend on the orbit). An isometric action is \emph{geometric} if it is proper and cobounded.

\begin{thm}[Milnor-Svarc Lemma]\label{milsv}
Suppose $\Gamma$ acts geometrically on a 
geodesic space $X$. Then $\Gamma$ is finitely generated and quasi-isometric to $X$, a quasi-isometry
being given by the map
$$
\psi\colon\Gamma\to X,\qquad \psi(\gamma)=\gamma(x_0),
$$
where $x_0\in X$ is any basepoint.
\end{thm}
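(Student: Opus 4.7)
The plan is to carry out the standard Milnor–Svarc argument: exploit coboundedness to produce a candidate finite generating set for $\Gamma$ via the proper action, then use a chain-of-translates argument along a geodesic to show simultaneously that this set generates $\Gamma$ and that the orbit map $\psi$ is a quasi-isometric embedding. The whole argument pivots on one picture: subdivide a geodesic in $X$ into segments of bounded length and replace each subdivision point by a nearby orbit point.

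First I would fix a basepoint $x_0\in X$ and use coboundedness to find $R>0$ such that every $x\in X$ satisfies $d(x,\Gamma\cdot x_0)\leq R$. Setting
$$
S=\bigl\{\gamma\in\Gamma\ :\ \gamma\neq 1,\ d(x_0,\gamma x_0)\leq 2R+1\bigr\},
$$
properness of the action applied to the bounded set $\overline{B}(x_0,2R+1)$ guarantees $S$ is finite; note also that $S=S^{-1}$ since $d$ is symmetric. Put $D=\max_{s\in S} d(x_0,sx_0)\leq 2R+1$.

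Next I would prove that $S$ generates $\Gamma$ and simultaneously extract the upper Lipschitz estimate for the word metric. Given $\gamma\in\Gamma$ with $d(x_0,\gamma x_0)=\ell$, pick a geodesic $\sigma\colon[0,\ell]\to X$ from $x_0$ to $\gamma x_0$, and choose points $x_0=p_0,p_1,\dots,p_n=\gamma x_0$ along $\sigma$ with $d(p_i,p_{i+1})\leq 1$ and $n\leq \ell+1$. By $R$-density choose $\gamma_i\in\Gamma$ with $d(p_i,\gamma_i x_0)\leq R$, and $\gamma_0=1$, $\gamma_n=\gamma$. Then
$$
d(x_0,\gamma_i^{-1}\gamma_{i+1}x_0)=d(\gamma_i x_0,\gamma_{i+1}x_0)\leq 2R+1,
$$
so each $s_i:=\gamma_i^{-1}\gamma_{i+1}$ lies in $S\cup\{1\}$, and $\gamma=s_0s_1\cdots s_{n-1}$. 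This shows $\Gamma=\langle S\rangle$ and gives the bound $|\gamma|_S\leq n\leq \ell+1=d(x_0,\gamma x_0)+1$ in the word metric induced by $S$.

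For the lower Lipschitz estimate, from the definition of $D$ and the triangle inequality applied to any word $\gamma=s_1\cdots s_m$ in $S$, one gets $d(x_0,\gamma x_0)\leq Dm$, i.e.\ $d(x_0,\gamma x_0)\leq D\,|\gamma|_S$. Combining both bounds and using that $d_\Gamma(\gamma_1,\gamma_2)=|\gamma_1^{-1}\gamma_2|_S$ while $d(\psi(\gamma_1),\psi(\gamma_2))=d(x_0,\gamma_1^{-1}\gamma_2 x_0)$ (by isometry of the $\Gamma$-action), one obtains
$$
\frac{1}{D}\,d(\psi(\gamma_1),\psi(\gamma_2))\leq d_\Gamma(\gamma_1,\gamma_2)\leq d(\psi(\gamma_1),\psi(\gamma_2))+1,
$$
so $\psi$ is a quasi-isometric embedding with explicit constants. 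Finally, $\psi(\Gamma)=\Gamma\cdot x_0$ is $R$-dense in $X$ by coboundedness, hence $\psi$ is a quasi-isometry.

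The only delicate point is the chain-of-translates step, where one must be careful that $S$ really is finite (this is where properness enters, not merely freeness of the action) and that the chain can be closed up with $\gamma_0=1$, $\gamma_n=\gamma$; everything else is a bookkeeping exercise with the triangle inequality. Note that the proof uses geodesicity of $X$ only to subdivide: if $X$ were merely a length space one would approximate by a near-geodesic path and adjust constants accordingly.
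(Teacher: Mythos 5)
Your proof is correct and is exactly the standard chain-of-translates argument for the Milnor--\v{S}varc Lemma. The paper does not give its own proof but refers to \cite[Chapter I.8.19]{BH}, which presents essentially this same argument (finite generating set from properness applied to a ball of radius $2R+1$, subdivision of a geodesic into unit steps, and the two Lipschitz estimates), so your proposal matches the paper's intended proof.
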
 

A proof of this result can be found e.g.~in
\cite[Chapter I.8.19]{BH}. As a corollary, if $M$ is a compact Riemannian manifold with Riemannian universal covering $\widetilde{M}$,
then the fundamental group of $M$ is quasi-isometric to $\widetilde{M}$. Another interesting consequence of Milnor-Svarc Lemma is the fact that virtual isomorphism implies quasi-isometry:

\begin{rem}\label{vi-qi}
Suppose that the groups $\G,\G'$ are such that there exists a short exact sequence
$$
\xymatrix{
1 \ar[r] & F \ar[r] & \G \ar[r]^\varphi & \G'' \ar[r] & 1\ ,
}
$$
where $\G''$ is a finite index subgroup of $\G'$, and let $X$ be any Cayley graph of $\G'$. Then every $\gamma\in \G$ isometrically acts on $X$
via the left translation by $\varphi(\gamma)$. This action is cocompact because $\G''$ has finite index in $\G'$, and proper because $F$ is finite. By Milnor-Svarc Lemma,
this implies that $\G$ is quasi-isometric to $\G'$.
\end{rem}

Thanks to Milnor-Svarc Lemma, two groups acting geometrically on the same proper geodesic space are quasi-isometric.
The converse implication does not hold: namely, there are examples of quasi-isometric groups that do not share any geometric model, i.e.~for which there exist
no geodesic space on which both groups can act geometrically. In fact, it is proved in~\cite{MSW1} that, if $p,q$ are distinct odd primes, then the groups 
$\G=\mathbb{Z}_p * \mathbb{Z}_q$ and $\G'=\mathbb{Z}_p * \mathbb{Z}_p$ are quasi-isometric (since they are virtually isomorphic to the free group on two generators) but
do not share any geometric model. Things get easier if one weakens the notion of action into the one of \emph{quasi-action}:

\begin{defn}
Suppose $(X,d)$ is a geodesic metric space, let ${\rm QI} (X)$ be the set of quasi-isometries of $X$ into itself, and let $\Gamma$ be a group. 
For $k\geq 1$, 
a $k$-\emph{quasi-action} of $\Gamma$ on $X$
is a map $h\colon \Gamma\to {\rm QI} (X)$ such that the following conditions hold:
\begin{enumerate}
\item
$h(\gamma)$ is a $(k,k)$-quasi-isometry with $k$-dense image 
for every $\gamma\in\Gamma$;
\item
$d(h(1)(x),x)\leq k$ for every $x\in X$;
\item
the composition $h(\gamma_1)\circ h(\gamma_2)$ is at distance bounded by $k$ from the quasi-isometry
$h(\gamma_1 \gamma_2)$, \emph{i.e.}
$$
d\big(h(\gamma_1 \gamma_2)(x),h(\gamma_1)(h(\gamma_2)(x))\big)\leq k\quad {\rm for\ every}\ x\in X,\ 
\gamma_1,\gamma_2\in\Gamma .
$$
\end{enumerate}
A $k$-quasi-action $h$ as above is \emph{cobounded} if one orbit of $\Gamma$ in $X$ is $k'$-dense for some $k'>0$ (or, equivalently, if every orbit of $\Gamma$ in $X$ is $k'$-dense,
for a maybe bigger $k'$), and proper if for every bounded subset $K\subseteq X$
the set $\{g\in\Gamma\, |\, g\cdot K\cap K\neq \emptyset \}$ is finite.
A quasi-action is  \emph{geometric} if it is cobounded and proper.
\end{defn}
Throughout the whole paper, by an abuse of notation, when $h$ is a quasi-action as above
we do not distinguish between $\gamma$ and $h(\gamma)$.

We have already observed that the property that the group $\G$ be quasi-isometric to $X$ is not sufficient to guarantee
that $\G$ acts geometrically on $X$. On the contrary,  we can ask for geometric \emph{quasi}-actions:

\begin{prop}\label{quasiact:prop}
Let $\varphi\colon\G\to X$ be a quasi-isometry between a group and a geodesic metric space
with quasi-inverse
$\psi\colon X\to\Gamma$. 
Then the formula
$$h(\gamma)(x)=\varphi(\gamma\cdot\psi(x))\qquad {\rm for\ every}\ x\in X $$
defines a geometric quasi-action
$h(\gamma)\colon X\to X$.
\end{prop}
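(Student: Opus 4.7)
The plan is to choose constants $K \geq 1$, $C \geq 0$ so that $\varphi$ and $\psi$ are both $(K,C)$-quasi-isometric embeddings and $d(\varphi\psi(x),x)\leq C$, $d(\psi\varphi(\gamma),\gamma)\leq C$ for every $x\in X$, $\gamma\in\Gamma$. Observe that the map $h(\gamma)\colon X\to X$ factors as the composition $h(\gamma)=\varphi\circ L_\gamma\circ\psi$, where $L_\gamma\colon\Gamma\to\Gamma$ is left multiplication by $\gamma$, which is an isometry of the Cayley graph (with respect to the fixed symmetric generating set). All the estimates below will follow from this factorization together with the QI bounds for $\varphi,\psi$, so a single constant $k$ depending only on $K,C$ will suffice for the entire quasi-action.

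Next I would verify the three defining axioms of a quasi-action. Since each of $\varphi$, $L_\gamma$, $\psi$ is a $(K,C)$-quasi-isometric embedding (with $L_\gamma$ actually isometric), their composition is a $(K^2, KC+C)$-quasi-isometric embedding; its image is $\varphi(\gamma\cdot\psi(X))$, and since $\psi(X)$ is $C$-dense in $\Gamma$ (because $\psi\varphi$ is close to $\mathrm{id}_\Gamma$), the set $\gamma\cdot\psi(X)$ is $C$-dense in $\Gamma$, so its image under $\varphi$ is $(KC+2C)$-dense in $X$ (using that $\varphi(\Gamma)$ is $C$-dense in $X$). For axiom~(2) one simply notes $h(1)(x)=\varphi\psi(x)$, which satisfies $d(h(1)(x),x)\leq C$. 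For axiom~(3), write
$$h(\gamma_1)(h(\gamma_2)(x))=\varphi\bigl(\gamma_1\cdot\psi\varphi(\gamma_2\psi(x))\bigr),\qquad h(\gamma_1\gamma_2)(x)=\varphi(\gamma_1\gamma_2\psi(x)).$$
Since $d(\psi\varphi(\gamma_2\psi(x)),\gamma_2\psi(x))\leq C$ and $L_{\gamma_1}$ is an isometry, applying $\varphi$ gives a bound of $KC+C$ on the distance between the two expressions above, uniformly in $x$, $\gamma_1$, $\gamma_2$.

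Finally I would establish that the quasi-action is geometric. For coboundedness, pick any basepoint $x_0\in X$; the orbit $\{h(\gamma)(x_0):\gamma\in\Gamma\}=\varphi(\Gamma\cdot\psi(x_0))=\varphi(\Gamma)$ is $C$-dense in $X$ since $\varphi\psi$ is close to $\mathrm{id}_X$. For properness, suppose $K\subseteq X$ is bounded and $\gamma\in\Gamma$ satisfies $h(\gamma)(x)=y$ for some $x,y\in K$. Then $d(\varphi(\gamma\psi(x)),\varphi\psi(y))\leq C$, so the lower QI bound for $\varphi$ yields $d(\gamma\psi(x),\psi(y))\leq 2KC$. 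Hence $\gamma$ lies in the bounded subset $\psi(K)\cdot\psi(K)^{-1}\cdot B_{2KC}(1_\Gamma)$ of $\Gamma$; since the Cayley graph of a finitely generated group has bounded subsets that contain only finitely many group elements, the stabilizing set is finite. The only part requiring any care is this last estimate — essentially a bookkeeping exercise — since one must convert the coincidence $h(\gamma)(x)=y$ back into a statement in $\Gamma$ through the QI inequalities; everything else is formal consequences of the near-inverse relation between $\varphi$ and $\psi$.
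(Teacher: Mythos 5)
Your proof is correct and follows the same route as the paper, which simply observes that the quasi-action axioms follow from left translations being isometries of $\Gamma$ and that properness and coboundedness are routine; you have filled in the explicit estimates. One trivial bookkeeping slip at the end: from $d(\gamma\psi(x),\psi(y))\le 2KC$ the correct factorization (using left-invariance of the word metric) is $\gamma=\psi(y)\,w\,\psi(x)^{-1}$ with $|w|\le 2KC$, so $\gamma\in\psi(K)\cdot B_{2KC}(1_\Gamma)\cdot\psi(K)^{-1}$ rather than $\psi(K)\cdot\psi(K)^{-1}\cdot B_{2KC}(1_\Gamma)$; either way the set is finite, so the argument stands.
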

\begin{proof}
The fact that $h$ is indeed a quasi-action readily follows from the fact that left translations are isometries of $\G$.
Properness and coboundedness of $h$ are easily checked.
\end{proof}

The usual proof of Milnor-Svarc's Lemma may easily be adapted to the context of quasi-actions to yield the following:

\begin{lemma}[\cite{FLS}, Lemma 1.4]\label{milsv+:lem}
Let $X$ be a geodesic space with basepoint $x_0$, and let $\Gamma$ be a group.
Let $h\colon \Gamma\to {\rm QI} (X)$ be a geometric quasi-action 
of $\Gamma$ on $X$.
Then $\Gamma$ is finitely generated and the map 
$\varphi\colon \Gamma\to X$ defined by $\varphi(\gamma)=\gamma(x_0)$ is a quasi-isometry.
\end{lemma}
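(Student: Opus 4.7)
The plan is to adapt the standard proof of the Milnor-Svarc lemma, carefully tracking the defects of the quasi-action: the element $h(1)$ is only $k$-close to the identity of $X$ by property~(2), and $h$ is only a quasi-homomorphism by property~(3).

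By coboundedness I fix $R>0$ such that $\Gamma\cdot x_0$ is $R$-dense in $X$, and for a constant $M=M(k,R)$ to be chosen below I set
\[
  S=\{\gamma\in\Gamma : d(\gamma(x_0),x_0)\le M\}.
\]
Applying properness to $K$ the closed $M$-ball around $x_0$ shows that $S$ is finite. A short computation combining properties~(2)--(3) with the $(k,k)$-quasi-isometry estimate for $h(\gamma)$ gives $d(\gamma^{-1}(x_0),x_0)\le k\,d(\gamma(x_0),x_0)+3k^2$, so by mildly enlarging $M$ I may assume $S=S^{-1}$. To show that $S$ generates $\Gamma$, pick $\gamma\in\Gamma$, join $x_0$ to $\gamma(x_0)$ by a geodesic, and subdivide it into points $x_0=p_0,\dots,p_n=\gamma(x_0)$ with $d(p_i,p_{i+1})\le R$ and $n\le d(x_0,\gamma(x_0))/R+1$. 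Coboundedness produces $\gamma_i\in\Gamma$ with $d(\gamma_i(x_0),p_i)\le R$, normalized so that $\gamma_0=1$ and $\gamma_n=\gamma$. Writing $s_i=\gamma_i^{-1}\gamma_{i+1}$, a single application of property~(3) followed by the $(k,k)$-quasi-isometry estimate on $h(\gamma_i)$ yields
\[
 d(s_i(x_0),x_0)\le k\bigl(d(\gamma_i(x_0),\gamma_{i+1}(x_0))+k\bigr)+k^2\le 3kR+2k^2,
\]
so for $M\ge 3kR+2k^2$ each $s_i$ lies in $S$, the identity $\gamma=s_0\cdots s_{n-1}$ holds, and $d_S(1,\gamma)\le d(x_0,\gamma(x_0))/R+1$. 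In particular $\Gamma$ is finitely generated.

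For the matching lower bound on $d_S$, I would induct on word length: given $\gamma=s_1\cdots s_n$ with $s_i\in S$, peeling off the last letter via one application of property~(3) and the $(k,k)$-Lipschitz estimate for $h(s_1\cdots s_{n-1})$ applied to the point $h(s_n)(x_0)$ (which lies within distance $M$ of $x_0$) gives $d(x_0,\gamma(x_0))\le(kM+2k)n+k$. The same two tools transfer both estimates from the identity to an arbitrary pair $\gamma_1,\gamma_2\in\Gamma$: one checks that $d(\gamma_1(x_0),\gamma_2(x_0))$ and $d(x_0,(\gamma_1^{-1}\gamma_2)(x_0))$ are comparable up to multiplicative factor $k$ and additive constant $O(k^2)$. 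Combined with the $R$-density of the image of $\varphi$, these two inequalities say precisely that $\varphi$ is a quasi-isometry.

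The main obstacle is the bookkeeping of errors: each use of property~(3) or of the $(k,k)$-quasi-isometry estimate on some $h(\gamma)$ introduces both multiplicative and additive defects, and the induction must be arranged so that the multiplicative constants remain uniform in the word length and the additive ones grow at most linearly in it. This works here precisely because at every step of the induction the Lipschitz estimate is applied to a point kept within distance $M$ of the basepoint, so each peeling step contributes only a bounded additive error.
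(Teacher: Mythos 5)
Your proof is correct and follows exactly the route the paper indicates, namely adapting the standard Milnor--Svarc argument to quasi-actions (the paper does not spell out the proof, deferring to [FLS, Lemma 1.4] and noting that the usual proof adapts). Your bookkeeping of the quasi-homomorphism defect (property (3)) and the quasi-Lipschitz constants is the right thing to track, and the key point you highlight — that each peeling step in the induction applies the Lipschitz estimate only to points kept within distance $M$ of the basepoint — is precisely what keeps the additive errors linear in word length.
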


\begin{cor}
 Let $\G,\G'$ be finitely generated groups. Then $\G$ is quasi-isometric to $\G'$ if and only if there exists a geodesic metric space
 $X$ such that both $\G$ and $\G'$ geometrically quasi-act on $X$.
\end{cor}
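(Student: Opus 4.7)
The plan is to establish the two implications separately, both of which follow quickly from the machinery already developed in this section. The backward direction is essentially a direct consequence of Lemma~\ref{milsv+:lem}, while the forward direction requires us to exhibit a concrete common geodesic space, and the natural candidate is a Cayley graph of one of the two groups.

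For the $(\Leftarrow)$ direction, suppose that both $\G$ and $\G'$ admit geometric quasi-actions on a common geodesic space $X$, and fix a basepoint $x_0\in X$. Applying Lemma~\ref{milsv+:lem} to each quasi-action, I obtain that the orbit maps $\varphi\colon\G\to X$, $\varphi(\gamma)=\gamma(x_0)$, and $\varphi'\colon\G'\to X$, $\varphi'(\gamma')=\gamma'(x_0)$, are both quasi-isometries. Composing $\varphi$ with a quasi-inverse of $\varphi'$ then produces the desired quasi-isometry $\G\to\G'$.

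For the $(\Rightarrow)$ direction, suppose that $\G$ is quasi-isometric to $\G'$. Fix a finite symmetric generating set $S$ of $\G'$ and set $X:=C_S(\G')$, endowed with the path metric in which every edge is isometric to the unit interval; with this metric $X$ is a geodesic space. The group $\G'$ acts on $X$ by isometries via left multiplication; the action is free on vertices and the $\G'$-orbit of the identity vertex is the whole vertex set of $X$, so this action is both proper and cobounded, and in particular qualifies as a geometric quasi-action (for a suitable constant $k$). On the other hand, the inclusion $\G'\hookrightarrow X$ is a quasi-isometry, so the hypothesis provides a quasi-isometry $\G\to X$; applying Proposition~\ref{quasiact:prop} to any such quasi-isometry and a chosen quasi-inverse $X\to\G$ yields a geometric quasi-action of $\G$ on $X$. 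Thus both groups geometrically quasi-act on the same geodesic space $X$.

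There is no serious obstacle in either direction: once Lemma~\ref{milsv+:lem} and Proposition~\ref{quasiact:prop} are available, the corollary is a formal consequence. The only tiny point worth noting is that a genuine geometric isometric action is automatically a geometric quasi-action, which is what allows us to use the Cayley graph of $\G'$ as a ready-made common model in the forward direction.
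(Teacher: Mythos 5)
Your proof is correct and follows exactly the same route as the paper: the backward implication is a direct application of Lemma~\ref{milsv+:lem} (composing the two orbit-map quasi-isometries), and the forward implication takes $X$ to be a Cayley graph of $\G'$, on which $\G'$ acts geometrically by left translations and $\G$ quasi-acts geometrically via Proposition~\ref{quasiact:prop}. You merely spell out the small observations (composition with a quasi-inverse, and that a genuine geometric action is a geometric quasi-action) that the paper leaves implicit.
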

\begin{proof}
 The ``if'' implication follows from Lemma~\ref{milsv+:lem}. On the other hand, if $\G$ is quasi-isometric to $\G'$ and $X$ is a fixed Cayley graph for $\G'$, then
 $\G$  geometrically quasi-acts on $X$ by Proposition~\ref{quasiact:prop}, and $G'$ geometrically acts on $X'$ by left translations.
\end{proof}

\section{Asymptotic cones}\label{asymptotic:sec}
Roughly speaking, 
the asymptotic cone of a metric space gives a picture of the metric
space as ``seen from infinitely far away''. It was introduced by Gromov in~\cite{Gro-nilpotent},
and formally defined in~\cite{wilkie}. Being uninfluenced by finite errors, the asymptotic cone is a quasi-isometry invariant, and turns 
discrete objects into continuous spaces. Citing Gromov, ``This space [the Cayley graph of a group $\Gamma$] may appear boring and uneventful to a geometer's eye since it is discrete and the traditional
local (e.g.~topological and infinitesimal) machinery does not run in $\G$. To regain the geometric perspective one has to change his/her position and move the observation point far away from $\G$. Then the metric in $\G$ seen from the distance $d$ becomes the original distance
divided by $d$ and for $d\to\infty$ the points in $\G$ coalesce into a connected continuous solid unity which occupies the visual horizon without any gaps or holes and fills our geometer's hearth with joy''~\cite{Gromov-asymptotic}.

Gromov himself provided a characterization of word hyperbolicity in terms of asymptotic cones~\cite{Gromov-hyperbolic, Gromov-asymptotic} (see also~\cite{Drutu1} 
and~\cite{FS}). 
Also \emph{relative} hyperbolic groups admit a neat (and very useful) characterization via asymptotic cones~\cite{DS}. It should maybe worth mentioning that, while having nice metric 
properties (e.g., asymptotic cones of Cayley graphs of groups are complete, homogeneous and geodesic), asymptotic cones are quite wild from the topological point of view. 
They are often not locally compact (for example, a group is virtually nilpotent if and only if all its asymptotic cones are locally compact~\cite{Gro-nilpotent, Drutu1} if and only if one of its asymptotic cone
is locally compact~\cite{Sapirnew, Hru, sisto2}), 
and their homotopy type can be quite complicated: Gromov himself~\cite{Gromov-asymptotic} conjectured that the fundamental group of any asymptotic cone of any group
either is simply connected, or has an uncountable fundamental group. While this conjecture eventually turned out to be false in general~\cite{OS2}, usually non-simply connected asymptotic cones of groups are rather complicated: Erschler and Osin showed
that every countable group is a subgroup of the fundamental group of an asymptotic cone of a finitely generated
group~\cite{EO} (this result was then sharpened in~\cite{DS}, where it is shown that, for every countable group $G$, there exists an
asymptotic cone of a finitely generated group whose fundamental group is the free product of uncountably
many copies of $C$). Moreover, Gromov's conjectural dichotomy was proved to hold in several cases
(see e.g.~\cite{Kent1, CK} for results in this direction).

Let us recall the definition of asymptotic cone of a space.
A \emph{filter} on $\mathbb{N}$ is a set $\omega\subseteq \mathcal{P}(\mathbb{N})$ satisfying the following conditions:
\begin{enumerate}
\item
$\emptyset\notin \omega$;
\item
$A,B\in \omega\ \Longrightarrow\  A\cap B\in \omega$;
\item 
$A\in \omega,\ B\supseteq A\ \Longrightarrow\ B\in\omega$.
\end{enumerate}
For example, the set of complements of finite subsets of $\mathbb{N}$
is a filter on $\mathbb{N}$, known as the \emph{Fr\'echet filter} on $\mathbb{N}$.

A filter $\omega$ is a \emph{ultrafilter} if for every $A\subseteq\mathbb{N}$
we have either $A\in\omega$ or $A^c\in\omega$, where $A^c :=\mathbb{N}\setminus A$.
For example, fixing an element $a\in \mathbb{N}$, we can take the associated
\emph{principal ultrafilter} to consist of all subsets of $\mathbb{R}bb N$ which 
contain $a$. An ultrafilter is \emph{non-principal} if it does not contain any 
finite subset of $\mathbb{N}$. 

It is readily seen that a filter is an ultrafilter if and only if it is maximal
with respect to inclusion. Moreover, an easy application of Zorn's Lemma
shows that any filter is contained in a maximal one. Thus, non-principal
ultrafilters exist (just take any maximal filter containing the Fr\'echet filter). 

From this point on, let us fix a non-principal ultrafilter $\omega$ on $\mathbb{N}$.
As usual, we say that a statement $\mathcal{P}_i$ depending on $i\in\mathbb{N}$ holds 
$\omega$-a.e.~if the set of indices such that $\mathcal{P}_i$ holds belongs to $\omega$. 
If $X$ is a topological space, and $(x_i)\subseteq X$ is a  
sequence in $X$, 
we say that $\omega$-$\lim x_i =x_\infty$ if 
$x_i\in U$ $\omega$-a.e. for every neighbourhood $U$ of $x_\infty$.
When $X$ is Hausdorff, an $\omega$-limit of a sequence, if it
exists, is unique. Moreover, any sequence in any compact space admits
an $\omega$-limit. For example, any sequence $(a_i)$ in
$[0,+\infty]$ admits a unique $\omega$-limit.

Now let $(X_i,x_i,d_i)$, $i\in\mathbb{N}$, be a sequence of pointed metric spaces. 
Let $\mathcal{C}$ be the set of sequences $(y_i), y_i\in X_i$,
such that $\omega$-$\lim {d_i(x_i,y_i)}<+\infty$, and consider the 
following equivalence
relation on $\mathcal{C}$:
$$
(y_i)\sim (z_i)\quad \Longleftrightarrow \quad
\omega\text{-}\lim {d_i(y_i,z_i)}=0.
$$
We set $\omega$-$\lim (X_i,x_i,d_i)=\mathcal{C}/_\sim$, and we endow 
$\omega$-$\lim (X_i,x_i,d_i)$ with the well-defined distance given by
$d_\omega \big([(y_i)],[(z_i)]\big)=\omega$-$\lim {d_i(y_i,z_i)}$. 
The pointed metric space $(\omega$-$\lim (X_i,x_i,d_i),d_\omega)$ is called the 
\emph{$\omega$-limit} of the pointed metric spaces $X_i$.

Let $(X,d)$ be a metric space, $(x_i)\subseteq X$ a sequence of
base-points, and $(r_i)\subset \mathbb{R}^+$ a sequence of rescaling factors
diverging to infinity. We introduce the notation $(X_\omega ((x_i),(r_i)),d_\omega):=
\omega$-$\lim (X_i,x_i,{d}/{r_i})$.

\begin{defn}\label{cone:def}
The metric space $\big(X_\omega\big((x_i),(r_i)\big),d_\omega\big)$ 
is the \emph{asymptotic cone}
of $X$ with respect to the ultrafilter $\omega$, 
the basepoints $(x_i)$ and the rescaling factors $(r_i)$. 
For conciseness, we will occasionally just write $X_\omega\big((x_i),(r_i)\big)$
for the asymptotic cone, the distance being implicitly understood to
be $d_\omega$, or even $X_\omega$ when basepoints and rescaling factors are fixed. 
\end{defn}

If $\omega$ is fixed and $(a_i)\subseteq \mathbb{R}$ is any sequence, we say that $(a_i)$
is $o(r_i)$ (resp.~$O(r_i)$) if $\omega$-$\lim {a_i}/{r_i}=0$ (resp.~$\omega$-$\lim {|a_i|}/{r_i}<\infty$).
Let $(x_i)\subseteq X$, $(r_i)\subseteq \mathbb{R}$ be fixed sequences of basepoints
and rescaling factors, and set $X_\omega=(X_\omega((x_i),(r_i)),d_\omega)$.
Sequences of subsets in $X$ give rise
to subsets of $X_\omega$: if for every $i\in\mathbb{N}$ we are given a subset
$\emptyset\neq A_i\subseteq X$, we set
$$
\omega\text{-}\lim A_i=\{[(p_i)]\in X_\omega\, |\, p_i\in A_i\ {\rm for\ every}\ i\in\mathbb{N}\}.
$$
It is easily seen that for any choice of the $A_i$'s, the set $\omega$-$\lim A_i$
is closed in $X_\omega$. Moreover, $\omega$-$\lim A_i\neq\emptyset$ if and only
if the sequence $(d(x_i,A_i))$ is $O(r_i)$.

It is usually quite difficult to describe asymptotic cones of spaces. Nevertheless, in some cases the situation is clear: for example, if the metric of $X$ is homogeneous and scale-invariant (meaning that
$(X,d)$ is isometric to $(X,d/r)$ for every $r>0$) then every asymptotic cone of $X$ is isometric to $X$ (this is the case e.g.~for $X=\mathbb{R}^n$). We will see below that also asymptotic cones of hyperbolic spaces are easily understood.

\subsection{Asymptotic cones and quasi-isometries}
Quasi-isometries are just maps that are bi-Lipschitz up to a finite error. Therefore, it is not surprising that
they asymptotically define
bi-Lipschitz homeomorphisms. In fact, it is an easy exercise to check that,
once a non-principal ultrafilter $\omega$ is fixed,
if $(X_i,x_i,d_i)$
$(Y_i,y_i,d'_i)$, $i\in\mathbb{N}$ are pointed metric spaces, 
$(r_i)\subset\mathbb{R}$ is a sequence of rescaling factors, and $f_i\colon X_i\to Y_i$ are $(k_i,c_i)$-quasi-isometries such that
$k=\omega$-$\lim k_i<\infty$, $c_i=o(r_i)$ and $d_i' (f_i (y_i),x_i)=O(r_i)$, 
then the formula $[(p_i)]\mapsto
[f_i(p_i)]$ provides a well-defined $k$-bi-Lipschitz embedding $f_\omega\colon \omega$-$\lim (X_i,x_i,d_i/r_i)\to
\omega-\lim (Y_i,y_i,d'_i/r_i)$. 

As a corollary, quasi-homogeneous spaces (i.e.~metric spaces whose isometry group admits $r$-dense orbits for some $r>0$) have homogeneous
asymptotic cones, whose isometry type does not depend on the choice of basepoints. Moreover,
quasi-isometric metric spaces have bi-Lipschitz homeomorphic asymptotic cones. In particular, quasi-isometric groups have homogeneous bi-Lipschitz homeomorphic
asymptotic cones.

Actually, the last sentence of the previous paragraph should be stated more precisely as follows: once a non-principal ultrafilter
and a sequence of rescaling factors are fixed, any quasi-isometry between two groups induces a bi-Lipschitz homeomorphism
between their asymptotic cones. Here and in what follows we will not focus on the dependence of asymptotic cones
on the choice of ultrafilters and rescaling factors. In fact, in many applications (and in all the arguments described in this paper) the role played
by such choices is very limited (but see the proof of Theorem~\ref{wall:thm}). Let us just mention here that, answering to a question of Gromov~\cite{Gromov-hyperbolic}, Thomas and 
Velickovic exhibited a finitely generated group with two non-homeomorphic asymptotic cones~\cite{TV}
(the first finitely presented example of such a phenomenon is due to Ol'shanskii and Sapir~\cite{OS}).

\subsection{Asymptotic cones of (relatively) hyperbolic groups}\label{treegr:subsec}
Recall that a geodesic metric space $X$ is $\delta$-\emph{hyperbolic}, where $\delta$ is a non-negative constant, if every side of every geodesic triangle in $X$
is contained in the $\delta$-neighbourhood of the union of the other two sides (in this case one says that the triangle is $\delta$-fine). A space is hyperbolic if
it is $\delta$-hyperbolic for some $\delta\geq 0$. A fundamental (and not completely trivial) fact is that being hyperbolic is a quasi-isometry invariant, so it makes sense
to speak of hyperbolic groups. 
A \emph{real tree} is a $0$-hyperbolic geodesic metric space. Simplicial trees are real trees, but many wilder examples of real trees can be easily constructed (and 
naturally arise as asymptotic cones of groups!).

Every pinched negatively curved simply connected Riemannian manifold is hyperbolic, so by Milnor-Svarc Lemma the fundamental group of every compact negatively curved Riemannian
manifold is hyperbolic. In fact, such groups were probably the motivating examples that lead Gromov to the definition of hyperbolicity. Of course, 
by dividing the metric by a factor $r_n$ we can turn a $\delta$-hyperbolic space into a $\delta/r_n$-hyperbolic space, and this implies that
any asymptotic cone of a hyperbolic space is a real tree (after one proves that geodesics in $X_\omega$  are $\omega$-limits of geodesics in $X$: this is false for generic geodesic
spaces,
and true if $X$ is hyperbolic). More precisely, if $\G$ is a non-virtually cyclic hyperbolic group, then every asymptotic cone of $\G$
is a homogeneous real tree each point of which is a branching point
with uncountably many branches. In particular, all the asymptotic cones of all non-virtually cyclic hyperbolic groups are pairwise isometric~\cite{DP}. 

As we mentioned above, being asymptotically $0$-hyperbolic indeed characterizes hyperbolic spaces: if every asymptotic cone of $X$ is a real tree, then $X$ 
is hyperbolic~\cite{Gromov-hyperbolic, Gromov-asymptotic, Drutu1,FS}. This implies, for example,
the non-trivial fact that if every triangle of diameter $D$ in $X$ is $\delta(D)$-fine, where $\delta(D)$ grows sublinearly with respect to $D$, then $X$ is hyperbolic (so $\delta$ is 
bounded). It is maybe worth mentioning that indeed one needs \emph{all} the asymptotic cones of $X$ to be real trees in order to get hyperbolicity of $X$:
groups having at least one asymptotic cone which is a real tree are known as \emph{lacunary hyperbolic}, and were studied in~\cite{OS}; note however that a result of Gromov~\cite{Gromov-hyperbolic}
ensures that a finitely presented lacunary hyperbolic group is indeed hyperbolic.

Just as hyperbolic groups generalize fundamental groups of negatively curved compact manifolds, the notion of \emph{relatively hyperbolic} group was introduced by Gromov
to extend the class of fundamental groups of pinched negatively curved complete  manifolds of finite volume (see e.g.~the fundamental paper of Farb~\cite{Farb2}). Here we will
define relative hyperbolic groups by means of a later asymptotic characterization which is due to Dru\c{t}u and Sapir~\cite{DS}.
 If $X$ is a set, then we denote by $|X|$ the cardinality of $X$.

\begin{defn}
A geodesic metric space $X$ is said to be \emph{tree-graded} 
with respect to a collection of closed subsets $\{P_i\}_{i\in I}$, 
called \emph{pieces}, if
\begin{enumerate}
\item
$\bigcup P_i=X$,
\item
$|P_i\cap P_j|\leq 1$ if $i\neq j$,
\item
any simple geodesic triangle in $X$  is contained in a single piece.
\end{enumerate}
\end{defn}

In fact, it is proved in~\cite[Lemma 2.15]{DS} that if 
 $A$ is a path-connected subset of a tree-graded space $Y$ and $A$ has no cut-points, then $A$ is contained in a piece.
 In order to avoid trivialities, we will always assume that no piece is equal to the whole tree-graded space. 

\begin{defn}\label{as-tree:def}
A geodesic metric space $X$ is \emph{asymptotically tree-graded} with respect to a collection of subsets
$A=\{H_i\}_{i\in I}$ if the following conditions hold: 
\begin{enumerate}
\item 
for each choice of basepoints $(x_i)\subseteq X$ and rescaling factors $(r_i)$, 
the associated asymptotic cone $X_\omega=X_\omega((x_i),(r_i))$ is tree-graded
with respect to the collection of subsets 
$\mathcal{P}=\{\omega$-$\lim H_{i(n)}\, |\, H_{i(n)}\in A\}$, and
\item if $\omega$-$\lim H_{i(n)}= 
\omega$-$\lim H_{j(n)}$, where $i(n),j(n)\in I$, then $i(n)=j(n)$ $\omega$-a.e.
\end{enumerate}
\end{defn} 

We are now ready to give the definition of relatively hyperbolic group.

\begin{defn}\label{drusap:def}
 Let $\G$ be a group and let $\{H_1,\ldots,H_k\}$ be a finite collection of proper infinite subgroups of $\G$. Then 
 $\G$ is hyperbolic relative to $\{H_1,\ldots,H_k\}$ if it is asymptotically tree-graded with respect
 to the family of all left cosets of the $H_i$'s. If this is the case, the $H_i$'s are called \emph{peripheral subgroups} of $\G$.
 \end{defn}

In some sense, the asymptotic cone of a relatively hyperbolic group is obtained from a real tree by blowing up points into pieces where the non-tree-like behaviour
is concentrated.

Definition~\ref{drusap:def} implies that a group admitting an asymptotic cone without cut-points cannot be relatively hyperbolic (such a group is said to be \emph{unconstricted})~\cite{DS}.
In fact, the following stronger result holds:

\begin{prop}[\cite{DS,BDM}]\label{unconstricted}
 If $\G$ is unconstricted, then every $(k,c)$-quasi-isometric embedding of $\G$ into a relatively hyperbolic group lies in a $D$-neighbourhood of a coset of a peripheral subgroup,
where $D$ only depends on $k,c$ (and on the geometry of the relatively hyperbolic group).
\end{prop}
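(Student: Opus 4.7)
The plan is to argue by contradiction, combining the asymptotic-cone characterization of relative hyperbolicity with the fact that bi-Lipschitz images of cut-point-free spaces are themselves cut-point-free, and then invoking the ``all or nothing'' principle for path-connected subsets of tree-graded spaces.

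Suppose for contradiction that no such uniform $D$ exists. Then one can produce a $(k,c)$-quasi-isometric embedding $f\colon\Gamma\to G$ together with sequences $x_n,y_n\in\Gamma$ and scale factors $r_n\to\infty$ with $d_\Gamma(x_n,y_n)=O(r_n)$, such that no sequence of peripheral cosets $(g_n H_{i(n)})$ in $G$ satisfies both $d_G(f(x_n),g_n H_{i(n)})=o(r_n)$ and $d_G(f(y_n),g_n H_{i(n)})=o(r_n)$. This is a precise quantitative way to negate the conclusion: the failure of a bounded neighbourhood bound forces, via a diagonal argument exploiting the uniformly bounded coarse intersection of distinct peripheral cosets in $G$, the existence of such recalcitrant pairs at arbitrarily large scales.

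Next, fix a non-principal ultrafilter $\omega$ and form the asymptotic cones $\Gamma_\omega=\Gamma_\omega((x_n),(r_n))$ and $G_\omega=G_\omega((f(x_n)),(r_n))$. Because $c=o(r_n)$, the map $f$ induces a $k$-bi-Lipschitz embedding $f_\omega\colon\Gamma_\omega\to G_\omega$, as described in Section~\ref{asymptotic:sec}. By Definition~\ref{drusap:def}, $G_\omega$ is tree-graded with pieces given by $\omega$-limits of peripheral cosets. Since $\Gamma$ is unconstricted, $\Gamma_\omega$ has no cut-points; and since $f_\omega$ is a bi-Lipschitz homeomorphism onto its image, $f_\omega(\Gamma_\omega)$ is a path-connected subset of $G_\omega$ containing no cut-points of itself. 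By the cited Lemma 2.15 of~\cite{DS}, $f_\omega(\Gamma_\omega)$ is contained in a single piece $P=\omega\text{-}\lim g_n H_{j(n)}$.

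In particular the distinguished points $f_\omega([(x_n)])$ and $f_\omega([(y_n)])$ both lie in $P$, which translates back to $d_G(f(x_n),g_n H_{j(n)})=o(r_n)$ and $d_G(f(y_n),g_n H_{j(n)})=o(r_n)$ for the same sequence of cosets $(g_n H_{j(n)})$. This directly contradicts the defining property of the sequences $(x_n)$ and $(y_n)$. The hardest part of the argument is the calibration in the first step: one must extract sequences whose asymptotic behaviour genuinely prevents any peripheral coset from tracking them on scale $r_n$, and one must verify that the rescaling $r_n$ can be chosen so that the asymptotic-cone conclusion is incompatible with the way the sequences were built. Once this setup is in place, the remainder of the proof is an essentially formal consequence of the tree-graded structure of $G_\omega$ and the cut-point-free property of $\Gamma_\omega$.
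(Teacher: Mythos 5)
The paper does not give its own proof of this proposition (it only cites~\cite{DS,BDM}), so I am comparing your attempt to the argument the citation points to, and to the closely analogous argument that the paper does spell out, namely the proof of Theorem~\ref{wall:thm}. Your high-level strategy (pass to asymptotic cones, use that $G_\omega$ is tree-graded, use that a bi-Lipschitz image of a space without cut-points has no cut-points, and invoke the ``contained in a piece'' lemma) is indeed the core of the Dru\c{t}u--Sapir and Behrstock--Dru\c{t}u--Mosher argument. However, the first step, which you yourself flag as the hardest part, is where the proof actually lives, and you have not done it. Two concrete problems. First, you announce a \emph{single} $(k,c)$-quasi-isometric embedding $f$; but the negation of the uniform statement produces a \emph{sequence} $f_n$ of $(k,c)$-embeddings with $f_n(\Gamma)\nsubseteq N_n(gH)$ for every coset. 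Once one knows the non-uniform version (a bounded neighbourhood exists for each fixed $f$, which is exactly what your Step~3--5 proves), a single $f$ cannot produce your sequences $(x_n,y_n,r_n)$: the one coset that works at $D=D_f$ tracks both $f(x_n)$ and $f(y_n)$ at scale $o(r_n)$ for any $r_n\to\infty$. So Step~1 is not merely underspecified, it is false as literally stated. Second, even after replacing $f$ by $f_n$, the assertion that the failure of a uniform bound ``forces, via a diagonal argument exploiting the uniformly bounded coarse intersection of distinct peripheral cosets,'' sequences whose cone image escapes every $\omega$-limit of cosets is neither justified nor obviously correct: the negation only provides, for each coset, some bad point $\gamma_n$, with no control on $d_\Gamma(e,\gamma_n)$, so that point need not survive in the cone at the scale you chose.

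Compare this with the paper's proof of Theorem~\ref{wall:thm}, which deals with exactly this calibration issue and is \emph{two-stage}: first cone at scale $(n)$ to single out a candidate limit $P=\omega\textrm{-}\lim g_nH_{i(n)}$; then pick $y_n$ minimal with $d(f_n(y_n),g_nH_{i(n)})\ge n$, set $\rho_n=d(e,y_n)$, prove $\rho_n/n\to\infty$, and re-cone with basepoints $(y_n)$ but \emph{the same scale} $(n)$. At this new cone the limit of the $\rho_n$-ball is a genuine half-space $A_\omega$, its image lies within distance~$1$ of $\overline{P}=\omega\textrm{-}\lim g_nH_{i(n)}$ and inside a second piece $P'\ne\overline P$ (since $[(f_n(y_n))]\in P'\setminus\overline{P}$), and the tree-graded ``gate point'' for the pair $(P',\overline{P})$ traps the image of $A_\omega$ in a bounded set, contradicting that $A_\omega$ is unbounded. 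That second cone, the change of basepoints, and the gate-point trap are exactly the ideas missing from your outline. A smaller point worth flagging: for the cone $\Gamma_\omega((x_n),(r_n))$ with the $(r_n)$ forced on you by the argument to have no cut-points, you actually need every asymptotic cone of $\Gamma$ to lack cut-points (the ``wide'' condition of~\cite{BDM}), not merely the existence of one such cone; the paper's wording of ``unconstricted'' as an existential condition does not quite license the step, so you should either strengthen the hypothesis to wide or explain why the relevant scales can be reconciled with the one good cone.
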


Recall that the geometric pieces
of the manifolds we are interested in (i.e.~irreducible $3$-manifolds and HDG manifolds) each consist of the product of a cusped hyperbolic manifold $N$
with a torus (where the cusped manifold may be just a surface in the case of pieces modeled on $\mathbb{H}^2\times\mathbb{R}$, and the torus may be reduced to a circle or to a point).
The fundamental groups of cusped hyperbolic manifolds are hyperbolic relative to the cusp subgroups (see e.g.~\cite{Farb2} for a much more general result dealing with variable negative curvature). 
As usual, in order to better understand the geometry of the group one would like to make use of Milnor-Svarc Lemma,
and to this aim we need to replace our cusped $n$-manifold $N$ with the one obtained by truncating the cusps, which will be denoted by $\overline{N}$. The universal covering of $\overline{N}$
is  the complement in hyperbolic space of an equivariant collection of disjoint horoballs whose union is just the preimage of the cusps under the universal covering map $\mathbb{H}^n\to N$. 
Such a space $B$ is usually called a \emph{neutered space}, and will be always endowed with the path metric induced by its Riemannian structure.
Its boundary is given by an equivariant collection of disjoint horospheres, each of which is totally geodesic (in a metric sense) and flat.
Now the quasi-isometry provided by Milnor-Svarc Lemma sends each coset of a peripheral subgroups of $\pi_1(N)$
into a component of $\partial B$ (this is true as stated if $N$ has one cusp and the basepoint of $B$ is chosen on $\partial B$, and true only up to finite errors
otherwise). Together with the invariance of asymptotic cones with respect to quasi-isometries, we may conclude that $B$ is asymptotically tree--graded with respect to
the family of the connected components of $\partial B$.

We recall that an \emph{$n$-flat} is a totally geodesic embedded subspace isometric to $\mathbb{R}^n$, and that whenever $X,Y$ are metric spaces we understand
that $X\times Y$ is endowed with the induced $\ell^2$-metric (which coincides with the usual product metric in the case of Riemannian manifolds).
Since the asymptotic cone of a product is the product of the asymptotic cones of the factors,
the following result summarizes the previous discussion:

\begin{prop}\label{omegachambers}
 Let $M=\overline{N}\times (S^1)^k$, where $\overline{N}$ is a complete finite volume hyperbolic $n$-manifold with cusps removed and $(S^1)^k$ is a flat $k$-dimensional torus, let $X$ be the metric universal covering of $M$
and let $X_\omega$ be any asymptotic cone of $X$. Then $X_\omega$ is isometric to $Y\times \mathbb{R}^k$, where $Y$ is a tree-graded space and every piece of $Y$ is an $(n-1)$-flat. In particular, if $\dim N=2$, then
$X_\omega$ is isometric to the product of a real tree with $\mathbb{R}^k$.
 \end{prop}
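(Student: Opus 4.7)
My plan is to exploit two basic facts about asymptotic cones: their compatibility with $\ell^2$-products, and the fact that any asymptotic cone of $\mathbb{R}^k$ is isometric to $\mathbb{R}^k$ itself, since the Euclidean metric is homogeneous and scale-invariant. Writing $X=B\times\mathbb{R}^k$, where $B$ denotes the universal Riemannian cover of $\overline{N}$ (i.e.~a neutered hyperbolic $n$-space), I obtain $X_\omega\cong B_\omega\times\mathbb{R}^k$ directly from the definition of $\omega$-limits. It therefore suffices to show that $Y:=B_\omega$ is tree-graded with pieces isometric to $\mathbb{R}^{n-1}$.

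Next I would identify the tree-graded structure on $B_\omega$ via relative hyperbolicity. By the Milnor-Svarc Lemma applied to the geometric action of $\pi_1(\overline{N})=\pi_1(N)$ on $B$, any orbit map is a quasi-isometry $\pi_1(N)\to B$, and moreover it sends each coset of a cusp subgroup to within bounded Hausdorff distance of a horosphere component of $\partial B$. Since $N$ is a complete finite-volume hyperbolic manifold, $\pi_1(N)$ is hyperbolic relative to its cusp subgroups (cf.~\cite{Farb2}); by Definition~\ref{drusap:def}, this means precisely that $\pi_1(N)$ is asymptotically tree-graded with respect to the family of cosets of its cusp subgroups. Since asymptotic tree-gradedness is formulated purely in terms of asymptotic cones and therefore transfers across quasi-isometries, $B$ itself is asymptotically tree-graded with respect to the family of its horospheres. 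Hence $Y=B_\omega$ is tree-graded, with pieces given by $\omega$-limits of sequences of horospheres in $B$.

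The key geometric step, and the place requiring some care, is to verify that each such $\omega$-limit is isometric to $\mathbb{R}^{n-1}$. For this one argues that every horosphere $H\subset B$ is convex in $B$ and inherits from $B$ the flat Euclidean metric of $\mathbb{R}^{n-1}$: the hyperbolic geodesic in $\mathbb{H}^n$ between two points of $H$ would enter the corresponding (open, removed) horoball, and a direct computation in the upper half-space model shows that any path in $B$ between two points of $H$ has hyperbolic length at least the intrinsic Euclidean distance, with equality realized by the straight path along $H$. Since any asymptotic cone of $\mathbb{R}^{n-1}$ is isometric to $\mathbb{R}^{n-1}$, each $\omega$-limit of horospheres is thus an $(n-1)$-flat.

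Finally, when $\dim N=2$ the pieces are $1$-flats, i.e.~isometric to $\mathbb{R}$. A tree-graded space all of whose pieces are isometric to $\mathbb{R}$ is automatically a real tree: by definition, any simple geodesic triangle must lie in a single piece, but no non-degenerate simple triangle fits inside $\mathbb{R}$, so $Y$ contains no such triangles and is $0$-hyperbolic. The main obstacle in the whole plan is the transfer of tree-gradedness from $\pi_1(N)$ to $B$, which relies on the robustness of the asymptotic characterization of relative hyperbolicity due to Dru\c{t}u and Sapir~\cite{DS} under Milnor-Svarc-type quasi-isometries; however, both ingredients are well-established, making the whole argument more conceptual than technical.
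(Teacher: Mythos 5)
Your proposal is correct and follows essentially the same route as the paper: decompose $X=B\times\mathbb{R}^k$ and use compatibility of asymptotic cones with $\ell^2$-products; invoke relative hyperbolicity of $\pi_1(N)$ with respect to cusp subgroups and the Dru\c{t}u--Sapir asymptotic characterization; transfer tree-gradedness across the Milnor--\v{S}varc quasi-isometry to the neutered space $B$, with peripheral cosets matching horospheres; and observe that horospheres are flat so their $\omega$-limits are $(n-1)$-flats, while in the surface case the pieces are lines and a tree-graded space with real-tree pieces is itself a real tree. The only difference is that you spell out the metric convexity and flatness of the boundary horospheres in $B$ via the upper-half-space computation, which the paper simply asserts; this is a welcome elaboration rather than a different argument.
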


 The last sentence of the previous proposition can be explained in two ways. If $\dim \overline{N}=2$, then $\pi_1(\overline{N})$ is free, whence hyperbolic. On the other hand,
 $\pi_1(\overline{N})$ is also hyperbolic relative to the peripheral subgroups, which are isomorphic to $\mathbb{Z}$, so its asymptotic cone is tree-graded with pieces isometric to lines;
 and it is easy to see that a tree-graded space with pieces that are real trees is itself a real tree.
 
 \begin{cor}\label{aaa}
 Let $M_1,M_2$ be a Seifert $3$-manifold with non-empty boundary and a non-compact complete finite-volume hyperbolic $3$-manifold, respectively. Then
 $\pi_1(M_1)$ is not quasi-isometric to $\pi_1(M_2)$.
 \end{cor}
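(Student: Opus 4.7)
The plan is to distinguish the two fundamental groups via a topological invariant of their asymptotic cones, namely the existence of cut points.

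First, I would pass to finite covers so as to put both manifolds into a standard form. Every orientable Seifert $3$-manifold with boundary is finitely covered by a product $\Sigma\times S^1$ with $\Sigma$ a compact orientable surface with non-empty boundary (the Euler class of the $S^1$-bundle vanishes once the base orbifold has been resolved, since $H^2(\Sigma;\mathbb{Z})=0$), and since virtual isomorphism implies quasi-isometry by Remark~\ref{vi-qi}, it is enough to treat the case $M_1=\Sigma\times S^1$. When $\chi(\Sigma)\geq 0$ the group $\pi_1(M_1)$ is virtually abelian and so of polynomial growth, while $\pi_1(M_2)$ has exponential growth (it contains a non-abelian free subgroup); so growth, a quasi-isometry invariant, disposes of this case. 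Assume then $\chi(\Sigma)<0$, so that $\pi_1(\Sigma)$ is a non-abelian free group. By Milnor--Svarc and Proposition~\ref{omegachambers} (with $n=2$, $k=1$), every asymptotic cone of $\pi_1(M_1)$ is isometric to $T\times\mathbb{R}$ with $T$ a real tree; moreover, as recalled in Subsection~\ref{treegr:subsec} for non-virtually-cyclic hyperbolic groups like $\pi_1(\Sigma)$, the tree $T$ is homogeneous and each of its points is a branch point.

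For $M_2$, applying Milnor--Svarc to the metric universal cover of the truncation $\overline{M}_2$ and Proposition~\ref{omegachambers} with $n=3$, $k=0$, any asymptotic cone $Y_\omega$ of $\pi_1(M_2)$ is tree-graded with pieces isometric to $\mathbb{R}^2$. The crux of the argument then reduces to two claims. (a) $T\times\mathbb{R}$ contains no cut points: given any $(t_0,s_0)$, pick any $t'\neq t_0$ in $T$ and observe that the line $\{t'\}\times\mathbb{R}$ together with the two slabs $T\times\{s_0\pm 1\}$ can be used to route any path whose endpoints lie in $(T\times\mathbb{R})\setminus\{(t_0,s_0)\}$, each leg of this route avoiding $(t_0,s_0)$ either because it runs at a height different from $s_0$ or at a $T$-coordinate different from $t_0$. (b) $Y_\omega$ contains a cut point: by Lemma 2.15 of~\cite{DS}, recalled in the excerpt, any path-connected cut-point-free subset of a tree-graded space is contained in a single piece, so if $Y_\omega$ had no cut points it would coincide with a single $2$-flat, forcing $\pi_1(M_2)$ to be quasi-isometric to $\mathbb{Z}^2$; this contradicts the exponential growth of $\pi_1(M_2)$.

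Since any quasi-isometry between $\pi_1(M_1)$ and $\pi_1(M_2)$ would induce a bi-Lipschitz, hence topological, homeomorphism between their asymptotic cones, and the presence of a cut point is a topological invariant, claims (a) and (b) together preclude such a quasi-isometry. The main subtlety I anticipate is claim (b): ensuring that the tree-graded structure on $Y_\omega$ is genuinely non-trivial; contradicting via growth against the non-abelian free subgroup of $\pi_1(M_2)$ seems to be the most economical way to handle this point.
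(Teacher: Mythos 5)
Your proof is correct and uses exactly the same strategy as the paper: by Proposition~\ref{omegachambers} the two asymptotic cones are, respectively, $T\times\mathbb{R}$ and a tree-graded space with $2$-flat pieces, and these are distinguished by the presence or absence of cut points. The paper's version is terser (it simply asserts that a tree-graded space has cut points, invoking the standing convention that no piece equals the whole space), whereas you additionally spell out the covering reduction to $\Sigma\times S^1$, the explicit routing showing $T\times\mathbb{R}$ has no cut points, and the growth argument guaranteeing non-triviality of the tree-graded structure on the cone of $\pi_1(M_2)$ --- useful details, but the same underlying approach.
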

\begin{proof}
By Proposition~\ref{omegachambers}, it is sufficient to observe that the product  of a real tree with the real line does not contain cut-points, while
any tree-graded space does.
\end{proof}

  Of course free abelian groups are unconstricted, so Proposition~\ref{unconstricted} (together with Milnor-Svarc Lemma) readily implies the following:
 
 \begin{prop}\label{neutered1}
  Let $B\subseteq\mathbb{H}^n$ be an $n$-dimensional neutered space as above. Then the image of any $(k,c)$-quasi-isometric embedding of $\mathbb{R}^{n-1}$
  into $B$ lies in the $D$-neighbourhood of a component of $\partial B$, where $D$ only depends on $(k,c)$ (and on the geometry of $B$).
 \end{prop}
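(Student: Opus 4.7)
The plan is to deduce the statement from Proposition~\ref{unconstricted} applied to the unconstricted group $\mathbb{Z}^{n-1}$, transferring the result back to the neutered space $B$ via the Milnor--Svarc quasi-isometry.

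First, let $N$ be the complete finite-volume hyperbolic $n$-manifold whose truncation $\overline{N}$ has $B$ as its universal cover. By Milnor--Svarc (Theorem~\ref{milsv}) applied to the geometric action of $\G=\pi_1(\overline{N})=\pi_1(N)$ on $B$, there is a quasi-isometry $\psi\colon B\to \G$ with quasi-inverse $\varphi\colon \G\to B$, with constants depending only on the geometry of $B$. As discussed in the paragraph preceding Proposition~\ref{omegachambers}, $\G$ is hyperbolic relative to its cusp subgroups, and the quasi-isometry $\varphi$ sends each left coset of a peripheral subgroup into a bounded neighbourhood of a unique component of $\partial B$, and conversely $\psi$ sends each component of $\partial B$ into a bounded neighbourhood of a unique coset; the Hausdorff error is uniform and depends only on the geometry of $B$.

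Second, given a $(k,c)$-quasi-isometric embedding $f\colon \mathbb{R}^{n-1}\to B$, restrict $f$ to the integer lattice $\mathbb{Z}^{n-1}\subseteq \mathbb{R}^{n-1}$ and compose with $\psi$ to obtain a $(k',c')$-quasi-isometric embedding
\[
F=\psi\circ f|_{\mathbb{Z}^{n-1}}\colon \mathbb{Z}^{n-1}\longrightarrow \G,
\]
where $(k',c')$ depends only on $(k,c)$ and on the geometry of $B$. Since $\mathbb{Z}^{n-1}$ is a free abelian group, it is unconstricted (every asymptotic cone of $\mathbb{Z}^{n-1}$ is isometric to $\mathbb{R}^{n-1}$, which has no cut-points). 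Proposition~\ref{unconstricted} then yields a constant $D'=D'(k',c')$ and a left coset $gH$ of some peripheral subgroup such that $F(\mathbb{Z}^{n-1})$ is contained in the $D'$-neighbourhood of $gH$ in $\G$.

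Third, pushing this back through $\varphi$, the set $\varphi(F(\mathbb{Z}^{n-1}))$ lies in a uniform neighbourhood of the component $\partial_0 B$ of $\partial B$ associated to $gH$. Since $\varphi\circ\psi$ is at bounded distance from the identity of $B$, we conclude that $f(\mathbb{Z}^{n-1})$ lies in a $D''$-neighbourhood of $\partial_0 B$ for some $D''$ depending only on $(k,c)$ and on the geometry of $B$. Finally, because every point of $\mathbb{R}^{n-1}$ lies at distance at most $\tfrac{\sqrt{n-1}}{2}$ from $\mathbb{Z}^{n-1}$ and $f$ is $(k,c)$-Lipschitz in the large, the full image $f(\mathbb{R}^{n-1})$ lies in a $D$-neighbourhood of $\partial_0 B$ with $D=D(k,c,B)$.

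The argument is essentially bookkeeping once Proposition~\ref{unconstricted} is in hand; the only point that requires attention is the uniform control of the Hausdorff distance between peripheral cosets in $\G$ and components of $\partial B$, but this is a standard consequence of Milnor--Svarc applied to the action on the neutered space and does not depend on the particular embedding $f$.
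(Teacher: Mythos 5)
Your proof is correct and follows exactly the route the paper has in mind: the paper's ``proof'' is the single sentence preceding the statement (``Of course free abelian groups are unconstricted, so Proposition~\ref{unconstricted} (together with Milnor-Svarc Lemma) readily implies the following''), and your argument is a careful unwinding of that sentence — apply Proposition~\ref{unconstricted} to the unconstricted group $\mathbb{Z}^{n-1}$ mapped into the relatively hyperbolic group $\pi_1(N)$, then transfer back to $B$ via Milnor--\v{S}varc. (One small point worth making explicit: the unconstrictedness of $\mathbb{Z}^{n-1}$ uses $n-1\geq 2$, i.e.\ $n\geq 3$, which is the standing assumption in this discussion.)
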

 
 Proposition~\ref{neutered1} was first proved by Schwartz in~\cite{Schwartz}, where it provided one of the key steps in the proof of the quasi-isometric rigidity of non-uniform lattices in the isometry group
 of real hyperbolic $n$-space, $n\geq 3$.
 In fact, Proposition~\ref{neutered1} implies that any quasi-isometry  $f\colon B\to B'$ between neutered spaces must coarsely send $\partial B$ into $\partial B'$. As a consequence, the map $f$ can be extended to
a quasi-isometry $\overline{f}$ of the whole of $\mathbb{H}^n$. An additional argument exploiting the fact that $\overline{f}$ sends an equivariant family of horospheres into another equivariant family of horospheres 
and involving a fine analysis  of the trace of $\overline f$ on $\partial \mathbb{H}^n$ allows to conclude that $\overline f$ is uniformly close to an isometry $g$. Then it is 
not difficult to show 
that $g$ almost conjugates the isometry group of $B$
 into the isometry group of $B'$, so that these isometry groups (which are virtually isomorphic to the fundamental groups of any compact quotient of $B,B'$, respectively) are virtually isomorphic
 (see the discussion in Subsection~\ref{subseccita}).
 This argument should somewhat clarify the statement we made in the introduction, according to which 
the existence of patterns with a distinguished geometric behaviour in a space $X$ usually imposes some rigidity on geometric maps of $X$ into itself. In the case just described the pattern is given by a family of flats, whose 
intrinsic geometry already make them detectable in the hyperbolic context where they lie. In other situations the single objects of the pattern do not 
enjoy peculiar intrinsic features. This is the case,
for example, for the boundary components of the universal covering of a compact hyperbolic $n$-manifold with geodesic boundary. Such boundary components are themselves hyperbolic hyperplanes in $\mathbb{H}^n$
(in particular, their asymptotic cones are \emph{not} unconstricted), so in order to prove that they are recognized by quasi-isometries one cannot rely on general results
on relatively hyperbolic groups. Nevertheless, the general strategy described by Schwartz still applies to get quasi-isometric rigidity~\cite{Fri}, 
once one replaces Proposition~\ref{unconstricted} with an argument making use of a suitable notion of \emph{coarse separation}. Such a notion was also at the hearth of Schwartz's original argument for
Proposition~\ref{neutered1}.
 
 Schwartz's strategy to prove QI-rigidity of non-uniform lattices may be in fact pursued also in the more general context of relatively hyperbolic groups. In fact, building on Proposition~\ref{unconstricted} it is possible to show that,
 if $\G$ is hyperbolic relative to $\{H_1,\ldots,H_k\}$, where each $H_i$ is unconstricted, and $\G'$ is quasi-isometric to $\G$, then
 also $\G'$ is hyperbolic relative to a a finite collection of subgroups each of which is quasi-isometric to one of the $H_i$'s~\cite{DS,BDM}. In fact,
 the hypothesis on the peripheral subgroups of $\G$ being unconstricted may be weakened into the request that each $H_i$ be not relatively hyperbolic with respect to any finite collection of proper subgroups.
 However, this is not sufficient to conclude that the class of relatively hyperbolic groups is quasi-isometrically rigid, since there exist relatively hyperbolic groups that have no list of peripheral subgroups composed uniquely
 of non-relatively hyperbolic groups (indeed, the inaccessible group constructed by Dunwoody~\cite{Dun2} is also an example of such a relatively hyperbolic group, see~\cite{BDM}; see
 the appendix for a brief discussion of (in)accessibility of groups). Nevertheless, it eventually turns out that the
 whole class of relatively hyperbolic groups is QI-rigid:
 
 \begin{thm}[\cite{Drutu3}]
  The $\G$ be a group hyperbolic relative to a family of subgroups $\{H_1,\ldots,H_k\}$, and let $\G'$ be a group quasi-isometric to $\G$. Then $\G'$
  is hyperbolic relative to a family $\{H_1',\ldots,H_s'\}$ of subgroups, where each $H_i'$ can be quasi-isometrically embedded
  in $H_j$ for some $j=j(i)$.
 \end{thm}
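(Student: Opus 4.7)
The plan is to leverage the asymptotic characterization of relative hyperbolicity (Definition~\ref{drusap:def}) to transfer the tree-graded structure from $\Gamma$ to $\Gamma'$, and then to reconstruct a peripheral family of subgroups of $\Gamma'$ out of the induced geometric pattern.

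First I would fix a quasi-isometry $\varphi\colon \Gamma\to\Gamma'$ with quasi-inverse $\psi$. For any non-principal ultrafilter $\omega$, any basepoint sequence and any rescaling factors, $\varphi$ induces a bi-Lipschitz homeomorphism $\varphi_\omega\colon \Gamma_\omega\to\Gamma'_\omega$. By hypothesis $\Gamma_\omega$ is tree-graded with pieces $\mathcal{P}=\{\omega\text{-}\lim g_n H_{i(n)}\}$, so transporting this structure via $\varphi_\omega$ yields a tree-graded structure on $\Gamma'_\omega$ whose pieces are $\varphi_\omega(P)$, $P\in\mathcal{P}$. This is a formal step. The key geometric content is that such pieces can be read off from $\Gamma'$ itself: for each coset $gH_i$, its image $\varphi(gH_i)$ is a subset of $\Gamma'$ whose ultralimits are precisely the pieces of $\Gamma'_\omega$, and tree-gradedness forces any two such subsets to coarsely intersect in at most a bounded set (otherwise two pieces in $\Gamma'_\omega$ would meet in more than one point, contradicting the definition of tree-graded).

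Next I would define a candidate peripheral family in $\Gamma'$ by coarse stabilizers. Call a subset $A\subseteq\Gamma'$ \emph{peripheral-like} if it lies at finite Hausdorff distance from some $\varphi(gH_i)$, and let $\mathcal{A}$ denote the collection of all such $A$. The group $\Gamma'$ acts on $\mathcal{A}$ by left translation, because left translation in $\Gamma'$ is an isometry and hence preserves ``Hausdorff-closeness to an element of $\mathcal{A}$''. For each $A\in\mathcal{A}$, let $H'(A)<\Gamma'$ be the stabilizer $\{\gamma'\in\Gamma'\,|\,d_H(\gamma' A,A)<\infty\}$. A Milnor--Svarc style argument, together with the observation that $\varphi$ conjugates left translation by an element of the coarse stabilizer of $gH_i$ in $\Gamma$ (which is a conjugate of $H_i$) to a coarse isometry of $A$, should show that $H'(A)$ acts coboundedly on $A$, so that $A$ is at finite Hausdorff distance from $H'(A)$ and $H'(A)$ quasi-isometrically embeds into $H_i$ via $\psi$. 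Picking one representative per $\Gamma'$-orbit in $\mathcal{A}$ yields the finite list $\{H_1',\ldots,H_s'\}$; finiteness is guaranteed because there are only $k$ orbits of peripheral cosets in $\Gamma$ under left translation, and the QI matches orbits up to bounded error. The asymptotically tree-graded structure of $\Gamma'_\omega$ with respect to ultralimits of cosets of the $H_j'$ (conditions (1) and (2) of Definition~\ref{as-tree:def}) is inherited from that of $\Gamma_\omega$ via $\varphi_\omega$.

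The main obstacle I anticipate is the identification step in which the \emph{coarsely} defined peripheral-like subsets $A\in\mathcal{A}$ are shown to be at finite Hausdorff distance from genuine cosets of subgroups of $\Gamma'$, and to assemble into only finitely many $\Gamma'$-orbits. For this one needs genuine rigidity of peripheral cosets: a peripheral coset cannot be Hausdorff-close to a non-peripheral subset, and two distinct peripheral cosets diverge. In $\Gamma$ this is built into Definition~\ref{drusap:def}(2); transporting it to $\Gamma'$ requires that $\varphi_\omega$ send pieces to pieces \emph{bijectively}, which depends on the fact that cut-points in $\Gamma'_\omega$ outside the pieces detect the non-piece part of the tree-graded structure. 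Once this separation property is secured, the construction of $H'(A)$ as a coarse stabilizer and the verification that $H'(A)\hookrightarrow H_{j(A)}$ via $\psi$ is a quasi-isometric embedding are essentially routine consequences of the coboundedness of the action, and the theorem follows.
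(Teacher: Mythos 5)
The paper does not prove this theorem; it is cited verbatim from Dru\c{t}u's paper, so there is no ``paper's proof'' to compare against. That said, your asymptotic-cone approach is the natural first attempt, and it has a genuine gap that is worth naming precisely.

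The gap is in the sentence ``The group $\Gamma'$ acts on $\mathcal{A}$ by left translation, because left translation in $\Gamma'$ is an isometry and hence preserves Hausdorff-closeness to an element of $\mathcal{A}$.'' This is a non sequitur. Left translation by $\gamma'\in\Gamma'$ carries $A\in\mathcal{A}$ to an isometric copy $\gamma' A$, but nothing in what you have established forces $\gamma' A$ to lie at finite Hausdorff distance from some $\varphi(g'H_{i'})$. That would follow only if the collection $\{\varphi(gH_i)\}$ were \emph{intrinsically} characterized by the metric of $\Gamma'$ up to bounded error, and establishing exactly that is the hard content of the theorem. The discussion in the paper immediately before the statement flags this: if each $H_i$ is unconstricted, Proposition~\ref{unconstricted} supplies the intrinsic characterization (the $\varphi(gH_i)$ are, up to bounded error, the only subsets in which an unconstricted group quasi-isometrically embeds), and your argument then goes through --- but that is the earlier, weaker statement cited from~\cite{DS,BDM}, not Dru\c{t}u's general theorem. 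A general $H_i$ may itself be relatively hyperbolic, in which case its images cannot be recognized from the tree-graded structure of $\Gamma'_\omega$ alone; a tree-graded structure on a space is far from unique, and the ultralimits of the $\varphi(gH_i)$ are only one of many piece-collections that make $\Gamma'_\omega$ tree-graded. The paper even notes that one cannot in general re-choose peripherals to be non-relatively-hyperbolic (Dunwoody's inaccessible group), so the special-case reduction is unavailable.

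A second symptom that the argument as written cannot be right: if $H'(A)$ acted coboundedly on $A$, then $A$ would be at finite Hausdorff distance from an $H'(A)$-orbit, so $H'(A)$ would be \emph{quasi-isometric} to $H_{j(A)}$. The theorem only asserts a quasi-isometric \emph{embedding}, and that weakening is essential: the peripheral structure one is forced to put on $\Gamma'$ may contain subgroups that sit properly inside images of peripheral cosets. Dru\c{t}u's actual proof circumvents both problems by first reformulating ``$X$ is asymptotically tree-graded with respect to $\mathcal{A}$'' as a finite list of purely metric, explicitly quasi-isometry-invariant conditions on the pair $(X,\mathcal{A})$ (roughly: uniform quasi-convexity of the $A$'s, a bounded-penetration property for geodesics, and the property that sufficiently fat simplices lie near a single $A$), verifying these directly for $\{\varphi(gH_i)\}\subseteq\Gamma'$, and only then producing a bona fide peripheral family of subgroups with the stated embedding property. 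None of these steps follows formally from the bi-Lipschitz homeomorphism of asymptotic cones, which is where your proposal stops.
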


 Before concluding the section, let us come back to what asymptotic cones can and cannot distinguish. We have already seen that
 the asymptotic cones of hyperbolic groups are all quasi-isometric to each other, so 
they cannot be exploited to distinguish non-quasi-isometric hyperbolic groups. However, asymptotic cones can tell apart the quasi-isometry classes of the eight $3$-dimensional geometries:

\begin{prop}\label{8models}
With the exception of the case of $\mathbb{H}^2\times \mathbb{R}$ and $\widetilde{SL_2}$, the eight $3$-dimensional geometries can be distinguished by looking at their asymptotic cones.
In particular, with the exception of the case of $\mathbb{H}^2\times \mathbb{R}$ and $\widetilde{SL_2}$, distinct geometries are not quasi-isometric.
\end{prop}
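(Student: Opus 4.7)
The plan is to attach, to each of the seven quasi-isometry classes $\{S^3\}$, $\{S^2\times\mathbb{R}\}$, $\{\mathbb{R}^3\}$, $\{Nil\}$, $\{\mathbb{H}^3\}$, $\{\mathbb{H}^2\times\mathbb{R},\widetilde{SL_2}\}$, $\{Sol\}$, a feature of their asymptotic cones that is not shared by any other class. Since the bi-Lipschitz homeomorphism type of the cone is a quasi-isometry invariant, this will force the seven classes to be pairwise non-quasi-isometric.

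First I would dispose of the two ``small'' geometries. A closed manifold modeled on $S^3$ has compact universal cover, so every asymptotic cone is a point, while the universal cover of a closed manifold modeled on $S^2\times\mathbb{R}$ is quasi-isometric to $\mathbb{R}$ (the $S^2$ factor being bounded), so every asymptotic cone is isometric to $\mathbb{R}$. These two cones are topologically distinct from each other and, by topological dimension, from the cones of the six remaining geometries.

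Next, virtual nilpotency splits off two more cases. By the dichotomy recalled in Section~\ref{asymptotic:sec}, a finitely generated group has locally compact asymptotic cones if and only if it is virtually nilpotent. Hence the cones of $\mathbb{R}^3$ and of $Nil$ are the only locally compact ones among the six remaining geometries. To separate the pair $\{\mathbb{R}^3,Nil\}$ I would invoke Pansu's theorem, which identifies the Hausdorff dimension of the cone of a virtually nilpotent group with its degree of polynomial growth: $3$ for $\mathbb{R}^3$ and $4$ for $Nil$, a fact already used in the proof of Theorem~\ref{8geom:rigidity}.

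For the remaining four classes, Rieffel's theorem (cited in the proof of Theorem~\ref{8geom:rigidity}) identifies $\widetilde{SL_2}$ with $\mathbb{H}^2\times\mathbb{R}$. Now $\mathbb{H}^3$ is Gromov hyperbolic, so every asymptotic cone is a real tree: topologically $1$-dimensional, with every point a cut point. The cone of $\mathbb{H}^2\times\mathbb{R}$ is, by the product rule for $\omega$-limits, of the form $T\times\mathbb{R}$ with $T$ a real tree; it is topologically $2$-dimensional, has no global cut points, and contains bi-Lipschitz copies of $\mathbb{R}^2$ (a geodesic line of $T$ crossed with the $\mathbb{R}$ factor). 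To distinguish $Sol$, I would rely on the classical absence of quasi-isometrically embedded Euclidean planes in $Sol$: no asymptotic cone of $Sol$ contains a bi-Lipschitz $\mathbb{R}^2$, ruling out quasi-isometry to $\mathbb{H}^2\times\mathbb{R}$; meanwhile, the cone of $Sol$ is not a real tree, as its two transverse horocyclic foliations yield a $2$-dimensional limit, ruling out quasi-isometry to $\mathbb{H}^3$. The main obstacle is precisely this last step: the absence of quasi-flats in $Sol$ is not a formal consequence of the material of Section~\ref{asymptotic:sec} and must be imported from the Eskin--Fisher--Whyte analysis of $Sol$, or established by a direct geometric argument on the exponential expansion-contraction along the two horocyclic directions.
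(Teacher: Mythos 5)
Your overall sieve is the same as the paper's: dispose of the two ``small'' geometries, isolate $\mathbb{R}^3$ and $Nil$ via local compactness, and then distinguish the remaining non-virtually-nilpotent geometries. But the final and hardest step --- separating $Sol$ from $\mathbb{H}^3$ and $\mathbb{H}^2\times\mathbb{R}$ --- is handled by a genuinely different invariant. The paper uses Burillo's result that $Sol$ is the unique $3$-dimensional geometry whose asymptotic cones are \emph{not simply connected}: the cones of $S^2\times\mathbb{R}$, $\mathbb{H}^3$ and $\mathbb{H}^2\times\mathbb{R}$ are respectively $\mathbb{R}$, the homogeneous real tree $T$, and $T\times\mathbb{R}$, all simply connected, so $\pi_1$ of the cone does the work in one stroke. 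You instead combine two separate facts about $Sol$: its cones contain no bi-Lipschitz $\mathbb{R}^2$ (importing absence of quasi-flats from Eskin--Fisher--Whyte, or a direct expansion-contraction argument), and its cones are not real trees. This works, but as you note it is not self-contained in the material developed in the paper, and the ``two transverse horocyclic foliations'' argument for the second fact would need fleshing out --- whereas Burillo gives it for free, and also immediately separates $Sol$ from $S^2\times\mathbb{R}$. Your treatment of $\mathbb{R}^3$ versus $Nil$ is also slightly different: you invoke Pansu's Hausdorff-dimension formula directly at the level of cones, while the paper observes that both spaces admit nontrivial $1$-parameter families of non-isometric similarities (hence coincide with their own cones) and then falls back on growth rates. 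Both are correct; yours is arguably more faithful to the goal of distinguishing the cones themselves. Overall the proposal is correct modulo the imported fact about quasi-flats in $Sol$, and the chief gain of the paper's route is that a single citation to Burillo replaces your two-pronged argument for $Sol$.
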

\begin{proof}
Every asymptotic cone of the sphere is a point. The Euclidean $3$-space and $Nil$ are the unique geometries such that each of their asymptotic cones is unbounded and
locally compact.
Moreover, both these spaces admit a nontrivial 1-parameter group of non-isometric similarities, so they are isometric to their asymptotic cones. However, it is not diffult to show that 
$\mathbb{R}^3$ is not bi-Lipschitz equivalent to $Nil$ (in fact, they are not even quasi-isometric, since they have different growth rates). 
Every asymptotic cone of $S^2\times\mathbb{R}$, of $\mathbb{H}^3$ and of $\mathbb{H}^2\times\mathbb{R}$ is isometric to
the real line, to the homogeneous real tree $T$ with uncountably many branches at every point, and 
to the product $T\times\mathbb{R}$, respectively. In particular, they are all simply connected and pairwise non-bi-Lipschitz equivalent. Finally, $Sol$ is the unique $3$-dimensional
geometry admitting non-simply connected asymptotic cones~\cite{Burillo}.
\end{proof}

 \section{Trees of spaces and their asymptotic cones}\label{trees:sec}
 Let now $M$ be either an irreducible non-geometric $3$-manifold, or an irreducible $n$-dimensional HDG manifold, $n\geq 3$. For the sake of simplicity, in the case when $M$ is not a HDG manifold,
 henceforth we will assume that $M$ is closed.
 It follows by the very definitions that $M$ decomposes into pieces of the form $\overline{N}\times T^k$, where $\overline{N}$ is a truncated hyperbolic manifold and
 $T^k$ is a $k$-dimensional torus. Moreover, $\dim \overline {N}\geq 3$ unless $\dim M=3$, while $k$ may be any non-negative integer. We put on $M$ a Riemannian metric (in the $3$-dimensional case, we will carefully choose one   
 later), and we denote by $X$ the Riemannian universal covering of $M$. The decomposition of $M$ into pieces lifts to a decomposition of $X$ into \emph{chambers}. Since
 the fundamental group of each piece injects into $\pi_1(M)$, every chamber is the universal covering of a geometric piece of $M$. Chambers are adjacent along \emph{walls}, where a wall is the lift to $X$ of a torus
 (or Klein bottle)
 of the decomposition of $M$. This decomposition of $X$ can be encoded by a graph $T$ whose vertices (resp.~edges) correspond to chambers (resp.~walls), and the edge corresponding to the wall $W$ joins the vertices corresponding to
 the chambers adjacent to $W$. Moreover, each chamber is foliated by \emph{fibers}, which are just the lifts to $X$ of the fibers of the pieces of $M$ (where fibers of a purely hyperbolic piece are understood to be points).
  Since $X$ is simply connected, the graph $T$ is a (simplicial, non-locally finite) tree. In fact, the decomposition of $M$ into pieces realizes $\pi_1(M)$ as the fundamental group of a graph of groups
 having the fundamental groups of the pieces  as vertex groups and the fundamental pieces of the splitting tori (or Klein bottles)  as edge groups. The tree $T$ is exactly the Bass-Serre tree associated to this graph of groups
 (see~\cite{serre,SW} for the definition and the topological interpretation of the fundamental group of a graph of groups). 

 By Milnor-Svarc Lemma, Theorems~\ref{kaplee} and~\ref{qi-preserve:thm} readily follow from the following:
 
 \begin{thm}\label{main1:thm}
  Let $M,M'$ be either irreducible non-geometric graph manifolds or irreducible HDG manifolds, and denote by $X,X'$ the universal coverings of $M,M'$ respectively. Let also
  $f\colon X\to X'$ be a quasi-isometry. Then for every chamber $C$ of $X$ there exists a chamber $C'$ of $X'$ such that
  $f(C)$ lies within finite Hausdorff distance from $C'$ (as a consequence, $C$ is quasi-isometric to $C'$). Moreover, $f$ preserves the structures of $X,X'$ as trees of spaces,
  i.e.~it induces an isomorphism between the trees encoding the decomposition of $X,X'$ into chambers and walls.
 \end{thm}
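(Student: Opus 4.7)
The strategy is to lift the problem to asymptotic cones, where the rigidity of the tree-graded structure can be exploited, and then to translate the conclusion back. Fix a non-principal ultrafilter $\omega$, a sequence of basepoints $(x_i)\subset X$, and a diverging sequence of rescaling factors $(r_i)$; the quasi-isometry $f$ induces a bi-Lipschitz homeomorphism $f_\omega\colon X_\omega\to X'_\omega$. The decomposition of $X$ into chambers meeting along walls passes to the limit, giving a decomposition of $X_\omega$ into \emph{$\omega$-chambers} meeting along \emph{$\omega$-walls}. By Proposition~\ref{omegachambers}, the $\omega$-chamber arising from a chamber of type $\overline{N}_i \times T^{k_i}$ is isometric to $Y_i \times \mathbb{R}^{k_i}$ with $Y_i$ tree-graded whose pieces are $(n_i{-}1)$-flats; since $n_i+k_i=n$, every $\omega$-wall (being the intersection of two adjacent $\omega$-chambers) is an $(n{-}1)$-flat. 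The combinatorics of this decomposition is encoded by an $\mathbb{R}$-tree obtained as the $\omega$-limit of the Bass-Serre tree $T$, and the analogous picture holds for $X'_\omega$.

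The heart of the proof is to show that this decomposition is intrinsic to the bi-Lipschitz geometry of $X_\omega$, so that $f_\omega$ must permute $\omega$-chambers and $\omega$-walls. I would first characterize $\omega$-walls as distinguished $(n{-}1)$-flats. Every $\omega$-wall is an $(n{-}1)$-flat that separates $X_\omega$; conversely, Proposition~\ref{unconstricted} applied to $\mathbb{Z}^{n-1}$ (which is unconstricted) forces any bi-Lipschitz $(n{-}1)$-flat in $X'_\omega$ to sit inside a single $\omega$-chamber, and within such a chamber its projection to the tree-graded factor must land in a single piece, which is itself a flat. A maximality and separation argument then singles out the $\omega$-walls as precisely those maximal $(n{-}1)$-flats that locally disconnect $X_\omega$. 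Once the $\omega$-walls are identified, the $\omega$-chambers are recovered as closures of connected components of the complement of the union of $\omega$-walls. At this point the \emph{irreducibility} hypothesis becomes essential: transversality of fibers across internal walls prevents the product structures of two adjacent $\omega$-chambers from combining into a single product structure extending across the shared wall, so the decomposition is canonical and $f_\omega$ must preserve it.

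Finally, one translates the statement back to $X$ and $X'$. If $f(C)$ failed to lie within uniformly bounded Hausdorff distance of a single chamber of $X'$, then by varying basepoints inside $C$ one could extract sequences whose images cross infinitely many walls of $X'$ at linear speed; taking the $\omega$-limit, one would obtain an $\omega$-chamber of $X_\omega$ whose image under $f_\omega$ does not lie in a single $\omega$-chamber of $X'_\omega$, contradicting the previous step. The induced bijection between chambers of $X$ and $X'$ preserves adjacency because walls are preserved, producing the desired isomorphism between the Bass-Serre trees $T$ and $T'$. The main obstacle throughout is the intrinsic characterization step: isolating $\omega$-walls among all bi-Lipschitz $(n{-}1)$-flats, and ruling out that the product structures of two adjacent $\omega$-chambers combine across a wall, requires a delicate blend of the cut-point analysis of tree-graded spaces from~\cite{DS}, the relatively hyperbolic geometry of each $\omega$-chamber, and the irreducibility hypothesis; in the 3-dimensional non-geometric case one must additionally handle hyperbolic and Seifert pieces in a uniform framework, essentially reproducing the argument of Kapovich and Leeb.
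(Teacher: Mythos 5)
Your overall strategy -- pass to asymptotic cones, characterize $\omega$-walls intrinsically so that $f_\omega$ must permute them, then translate back to the original spaces by a rescaling argument -- is exactly the route the paper takes. However, several of the key steps are not correct as you describe them.

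First, the step where you conclude that any bi-Lipschitz $(n-1)$-flat in $X'_\omega$ sits inside a single $\omega$-chamber by invoking Proposition~\ref{unconstricted} does not work: that proposition concerns quasi-isometric embeddings of an unconstricted \emph{group} into a \emph{relatively hyperbolic group}, whereas here you have a bi-Lipschitz embedding of $\mathbb{R}^{n-1}$ into the asymptotic cone $X'_\omega$, which is neither a relatively hyperbolic group nor tree-graded as a whole (distinct $\omega$-chambers meet along $(n-1)$-flats, not along single points as in Dru\c{t}u--Sapir pieces). The actual mechanism in the paper is topological: Lemma~\ref{fundamental:lem} shows, via the Jordan--Alexander complement theorem, that a bi-Lipschitz $(n-1)$-flat minus any $\omega$-fiber remains Lipschitz-path-connected, precisely because $\omega$-fibers have dimension at most $n-3$. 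Combined with Corollary~\ref{inters:cor} (any Lipschitz path between the two sides of an $\omega$-wall passes through a fiber) and Proposition~\ref{utile}, this forces the flat into a single $\omega$-chamber, and the tree-graded structure of the chamber plus invariance of domain finish the characterization. Your proposal masks this codimension argument, which is the genuine content of the higher-dimensional case.

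Second, your characterization ``$\omega$-walls are exactly the bi-Lipschitz $(n-1)$-flats'' is simply false when $\dim M=3$: a Seifert $\omega$-chamber is bi-Lipschitz to (tree)$\times\mathbb{R}$ and is full of $2$-flats that are not walls. The paper therefore runs a separate, more intricate argument in dimension three (Proposition~\ref{primifatti} and Corollary~\ref{omegawalls3dim}), distinguishing hyperbolic from Seifert $\omega$-chambers via the pattern of intersections of flats and the behaviour of bi-Lipschitz copies of (branching tree)$\times\mathbb{R}$, and relying on the non-positively curved model from~\cite{kaplee}. Writing this off as ``essentially reproducing the argument of Kapovich and Leeb'' conceals that your stated characterization of $\omega$-walls breaks down here. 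Two smaller points: you omit the preliminary fact that chambers and walls are quasi-isometrically embedded in $X$ (this is where irreducibility first enters in the HDG case, via~\cite[Theorem 0.16]{FLS}, and where non-positive curvature is used in dimension three); without it, $\omega$-chambers are not bi-Lipschitz to asymptotic cones of chambers. Also, ``$\omega$-chambers are the closures of components of the complement of the union of $\omega$-walls'' is vacuous, since every point of $X_\omega$ lies on some $\omega$-wall; the paper instead recovers chambers from walls via separation properties and Proposition~\ref{utile}.
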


 We would like to study the coarse geometry of $X$ starting from what we know about the coarse geometry of its chambers. This strategy can be 
 more easily pursued provided that chambers are quasi-isometrically embedded in $X$.
We first observe that, if the manifold $M$ carries a Riemannian
metric of non-positive curvature, then by the Flat Torus Theorem (see e.g.~\cite{BH})
the decomposition of $M$ into pieces can be realized geometrically by cutting along totally geodesic
embedded flat tori (and Klein bottles in the $3$-dimensional case). This readily implies that walls and chambers (endowed with their intrinsic path metric) are isometrically embedded in $X$.
Now it turns out that non-geometric irreducible $3$-manifolds ``generically''
 admit metrics of non-positive curvature: namely, this is true for every manifold containing at least one hyperbolic piece~\cite{leeb}, while it can fail for
 graph manifolds (Buyalo and Svetlov \cite{BS} have a complete criterion for deciding whether or not a $3$-dimensional graph  manifold supports a 
non-positively curved Riemannian metric). 
Nevertheless, it is proved in~\cite{kaplee} that  for every non-geometric irreducible $3$-manifold $M$, there exists a 
non-positively curved non-geometric irreducible $3$-manifold $M'$ 
such that the universal covers $X$ of $M$ and $X'$ of $M'$
are bi-Lipschitz homeomorphic by a homeomorphism which
preserves their structures as trees of spaces (in particular, walls and chambers are quasi-isometrically embedded in $X$). In fact,
thanks to this result it is not restrictive (with respect to our purposes) 
to consider only non-positively curved non-geometric irreducible $3$-manifolds.

In the higher dimensional case it is not possible to require non-positive curvature. Indeed, it is easy to construct HDG manifolds whose fundamental group cannot be quasi-isometric to the fundamental
group of any non-positively curved closed Riemannian manifold~\cite[Remark 2.23]{FLS}. 
At least from the coarse geometric point of view, things get a bit better when restricting to irreducible manifolds:

\begin{thm}[Theorem 0.16 in~\cite{FLS}]
 Suppose that $M$ is an irreducible HDG manifold with universal covering $X$. Then walls and chambers are quasi-isometrically embedded in $X$.
\end{thm}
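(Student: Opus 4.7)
The plan is to split the statement into two subtasks: walls are quasi-isometrically embedded in each adjacent chamber (call this ``Step 1''), and chambers are quasi-isometrically embedded in $X$ (``Step 2''). Step 2 is then organized around the Bass--Serre tree $T$ of the graph-of-groups decomposition.

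For Step 1, a wall $W$ adjacent to a chamber $C=\widetilde{\overline{N}}\times \mathbb{R}^k$ has the form $H\times\mathbb{R}^k$, where $H$ is a boundary horosphere of the neutered space $\widetilde{\overline{N}}\subseteq\mathbb{H}^{n-k}$. The path metric on $\widetilde{\overline{N}}$ restricted to $H$ is quasi-isometric to the flat metric on $H$ (a standard property of neutered spaces, closely related to Proposition~\ref{neutered1}), so $H\hookrightarrow \widetilde{\overline{N}}$ is a uniform QI-embedding. Taking products with $\mathbb{R}^k$ preserves this, giving Step~1. Irreducibility is not needed at this stage.

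For Step 2, fix $x,y\in C$ and let $\gamma$ be a geodesic in $X$ from $x$ to $y$. The combinatorial trace of $\gamma$ on $T$ is a loop at the vertex corresponding to $C$, so every wall crossed by $\gamma$ is traversed an even number of times. Hence $\gamma$ decomposes into sub-arcs contained in $C$ alternating with \emph{excursions} that leave $C$ through some wall $W$ and return through the same $W$. Writing $p,q\in W$ for the endpoints of such an excursion, Step~1 reduces the task to bounding $d_W(p,q)$ by a uniform multiple of the excursion length. The natural strategy is induction on the $T$-depth of the excursion: at depth one, when the excursion is contained in a single adjacent chamber $C'$, Step~1 applied to $W\subseteq C'$ gives $d_W(p,q)\lesssim d_{C'}(p,q)\leq \mathrm{length}(\gamma|_{[p,q]})$; at greater depth, each sub-excursion into deeper chambers is inductively replaced by an intrinsic path in $C'$, after which the depth-one argument applies inside $C'$.

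The principal obstacle is ensuring the QI-constants remain \emph{uniform} across arbitrary depths: naively each recursive step introduces a multiplicative factor that would eventually ruin the estimate. This is precisely where irreducibility (Definition~\ref{irr:def}) is essential. Along a common wall, the fiber directions inherited from the two adjacent chambers meet only at $0$, so no ``matched'' direction lets one zigzag between chambers to save distance cheaply. Without irreducibility, a deep excursion exploiting such a matched direction would produce paths of unbounded distortion and the induction would collapse (morally, the two pieces could be amalgamated into a fatter Seifert-type region in which $C$ is genuinely distorted). With irreducibility, transversality provides a uniform lower bound on the excursion length in terms of the wall displacement of its endpoints, which closes the induction with constants depending only on $M$.
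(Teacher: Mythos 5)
Your Step 1 (walls quasi-isometrically embedded in adjacent chambers) is correct, and your framework for Step 2 --- decomposing a geodesic between points of a chamber $C$ into arcs in $C$ interleaved with excursions that exit and re-enter through a common wall, driven by the observation that the trace on the Bass--Serre tree is a loop at $C$ --- is the right shape. But the proposal has a genuine gap at exactly the point you flag. You correctly observe that naive induction on excursion depth compounds the QI constant of Step~1 at each level, producing a bound of order $\lambda^{k}$ at depth $k$, which is useless because geodesics may a priori make excursions of unbounded depth. You then assert that ``transversality provides a uniform lower bound on the excursion length in terms of the wall displacement of its endpoints, which closes the induction.'' This is not an auxiliary fact: a depth-independent estimate $d_W(p,q)\lesssim \mathrm{length}(\gamma)$ for arbitrary excursions $\gamma$ from $p$ to $q$ through $W$ into the far side of $X\setminus W$ is precisely the assertion that $W$ is undistorted in the closed half of $X$ on that side --- that is, it is the theorem itself, restricted to one side of a wall. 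Asserting it does not close the induction; it presupposes what the induction was supposed to establish.

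To genuinely exploit irreducibility one has to do quantitative work: analyze how a displacement in a shared wall $W'$ decomposes into fiber and base components relative to each of the two adjacent chambers, use that the neutered hyperbolic base never shortens horospherical distances and that the flat fiber factor cannot absorb the base-direction displacement of the other chamber (this is where transversality enters, via a uniform ``angle'' between the two fiber subspaces, which is controlled because $M$ has only finitely many walls), and then package these estimates so that the constants do \emph{not} degrade when propagated along a chain of walls. That propagation argument is the substantive content of Theorem~0.16 in~\cite{FLS}, and it is what your single appeal to transversality leaves unproved. Without it you are left with exactly the exponential bound you already dismissed.
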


Nevertheless, it is possible to construct irreducible HDG manifolds which do not support any non-positively curved metric~\cite[Theorem 0.20]{FLS}.
It is still an open question whether the fundamental group of every irreducible HDG manifold is quasi-isometric to
the fundamental group of a non-positively curved HDG manifold.

 \subsection{Irreducible $3$-manifolds versus HDG manifolds}\label{versus:sub}
 If $C$ is any chamber of the tree of spaces $X$ introduced above, then the walls of
$C$ are $r$-dense in $C$. Using this fact, it is easy to realize that 
 the key step in the proof of
 Theorem~\ref{main1:thm} consists in showing that quasi-isometries must preserve walls (and send walls that do not separate each other to walls that do not separate each other).
 In the case of irreducible HDG $n$-manifolds, using that the codimension of the fibers is big enough  one can show that walls are the unique $(n-1)$-dimensional
quasi-isometrically embedded copies of $\mathbb{R}^{n-1}$ in $X$ (see Theorem~\ref{wall:thm}), and this provides the key step towards Theorem~\ref{main1:thm}
in that case. On the contrary, chambers corresponding to Seifert pieces of irreducible $3$-manifolds contain a lot of $2$-dimensional flats coming from the lifts to $X$
of the product of  closed simple geodesics in the $2$-dimensional base with the $S^1$-fiber. As a consequence, some more work is needed to provide a quasi-isometric characterization
of walls. On the positive side, in the $3$-dimensional case one can use the peculiar features of non-positive curvature, that are not available in higher dimensions.

\subsection{Asymptotic cones of trees of spaces}
Let $M$ be either a  $3$-dimensional irreducible non-geometric $3$-manifold or an irreducible HDG manifold. For the sake of simplicity, in the first case we also assume that $M$ is closed, and endowed with a non-positively curved metric
(see the previous section). We have seen that the universal covering of $M$ decomposes as a tree of spaces into chambers separated by walls. Moreover, chambers and walls are quasi-isometrically embedded in $X$ (and even isometrically embedded
when $\dim M=3$). 

Let $\omega$ be a fixed non-principal ultrafilter on $\mathbb{N}$, let $(x_i)\subseteq X$,
$(r_i)\subseteq \mathbb{R}$ be fixed sequences of basepoints and rescaling factors,
and set $X_\omega=(X_\omega, (x_i),(r_i))$.

\begin{defn}
An $\omega$-\emph{chamber} (resp.~$\omega$-wall, $\omega$-fiber)
in $X_\omega$ is a subset $Y_\omega\subseteq X_\omega$
of the form $Y_\omega=\omega\text{-}\lim Y^i$,
where each $Y^i\subseteq X$ is a chamber (resp.~a wall, a fiber). 
An $\omega$-wall $W_\omega=\omega{\textrm -}\lim W_i$ is a \emph{boundary}
(resp.~\emph{internal}) $\omega$-wall if $W_i$ is a boundary (resp.~internal)
wall $\omega$-a.e.
\end{defn}

The decomposition of a tree-graded space into its chambers induces a 
decomposition of $X_\omega$ into its $\omega$-chambers. 
Indeed, since
a constant $k$ exists such that each point of $X$ has distance
at most $k$ from some wall, every point of $X_\omega$ lies in some $\omega$-wall.
Recall that, in a tree-graded space, subspaces
homeomorphic to Euclidean spaces of dimension bigger than one (which do not have cut-points) are contained in pieces.
We would like to prove that
a similar phenomenon occurs in our context, and this is indeed the case
when dealing with HDG manifolds. In fact, in that case
$\omega$-walls can be characterized as the only subspaces of
$X_\omega$ which are bi-Lipschitz homeomorphic to $\mathbb{R}^{n-1}$, where $n=\dim M$ (see Proposition~\ref{wall:char:prop}). As a consequence, 
every bi-Lipschitz homeomorphism of $X_\omega$ preserves the decomposition
of $X_\omega$ into $\omega$-walls. Together
with an argument which allows us to recover quasi-isometries
of the original spaces from bi-Lipschitz homeomorphisms of asymptotic cones, 
this will imply
Theorem~\ref{main1:thm} in the case of HDG manifolds. As anticipated above, extra work is needed in the case of $3$-manifolds, where the presence of Seifert pieces
implies the existence of $2$-dimensional flats in $X_\omega$ that are not $\omega$-walls.

We begin by collecting some facts that will prove useful for the proof of Theorem~\ref{main1:thm}. Henceforth we denote by $n\geq 3$ the dimension
of $M$ (and of its unviersal covering $X$).
Being a quasi-isometric embedding, the inclusion of any chamber in $X$ induces a bi-Lipschitz embedding of the asymptotic cone of the chamber into $X_\omega$. Therefore, from Proposition~\ref{omegachambers}
we deduce that 
for any $\omega$-chamber $C_\omega$ there exists a bi-Lipschitz homeomorphism
$\varphi\colon C_\omega\to Y\times 
\mathbb{R}^{l}$, where $Y$ is a tree-graded space whose pieces are bi-Lipschitz homeomorphic
to $\mathbb{R}^{n-l-1}$, such that the following conditions hold:
\begin{enumerate}
 \item 
For every $p\in Y$, the subset
$\varphi^{-1}(\{p\}\times \mathbb{R}^l)$ is an $\omega$-fiber of $X_\omega$.
\item
For every piece $P$ of $Y$, the set
$\varphi^{-1}(P\times \mathbb{R}^l)$ is an $\omega$-wall of $X_\omega$. 
\end{enumerate}
In particular, 
every $\omega$-wall of $X$ is bi-Lipschitz homeomorphic to
$\mathbb{R}^{n-1}$, and
every $\omega$-fiber of $X$ is bi-Lipschitz homeomorphic to
$\mathbb{R}^{h}$ for some  $h\leq n-3$.
A \emph{fiber of $C_\omega$} is an $\omega$-fiber 
of $X_\omega$ of the form described
in item (1) above.
A \emph{wall of $C_\omega$} is an $\omega$-wall of $X_\omega$ of the form described
in item (2) above.
If $W_\omega$ is a wall of $C_\omega$, then we also
say that $C_\omega$ is \emph{adjacent} to $W_\omega$.

\begin{defn}\label{side}
 Let $W_\omega=\omega{\textrm -}\lim W_i$ be an $\omega$-wall. A \emph{side} $S(W_\omega)$
of $W_\omega$ is a subset $S(W_\omega)\subseteq X_\omega$ which is defined as follows.
For every $i$, let $\Omega_i$ be a connected component of $X\setminus W_i$.
Then
$$
S(W_\omega)=\left(\omega{\textrm -}\lim \Omega_i\right)\setminus W_\omega\ .
$$
\end{defn}

An internal wall has exactly
two sides $S(W_\omega)$, $S'(W_\omega)$
and is adjacent to exactly two $\omega$-chambers $C_\omega$, $C'_\omega$. Up to reordering them,
we also have $S(W_\omega)\cap C'_\omega=S'(W_\omega)\cap C_\omega=\emptyset$,
$C_\omega\setminus W_\omega\subseteq S(W_\omega)$ and $C'_\omega\setminus W_\omega\subseteq S'(W_\omega)$.
Moreover, 
any Lipschitz path joining points contained in distinct
sides of $W_\omega$ must pass through $W_\omega$.

\begin{defn}\label{esssep:def}
 A subset $A\subseteq X_\omega$ is \emph{essentially separated} by an internal $\omega$-wall $W_\omega\subseteq X_\omega$
 if $A$ intersects both sides of $W_\omega$.
\end{defn}

\begin{prop}\label{utile}
 Let $A\subseteq X_\omega$ be a subset which is not essentially separated by any $\omega$-wall. Then $A$ is contained in an $\omega$-chamber.
\end{prop}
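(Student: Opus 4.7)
The plan is to argue by contradiction: assuming $A$ is not contained in any $\omega$-chamber, I will produce an internal $\omega$-wall that essentially separates $A$. A Helly-type argument on the tree-like combinatorics of $\omega$-chambers and $\omega$-walls reduces the claim to the following separation statement: if $p,q\in X_\omega$ share no common $\omega$-chamber, then there is an internal $\omega$-wall $W_\omega$ with $p\in S(W_\omega)$ and $q\in S'(W_\omega)$. Granting this statement, the proposition follows, because such a wall automatically intersects both sides along $A$, since $p,q\in A$.

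To prove the separation statement, write $p=[p_i]$ and $q=[q_i]$, with $p_i$ in the chamber $C^i$ of $X$ and $q_i$ in the chamber $C_q^i$. The assumption that $p,q$ share no $\omega$-chamber forces, via a standard ultrafilter argument, the bounds $d(q_i,C^i)\geq c r_i$ and $d(p_i,C_q^i)\geq c r_i$ $\omega$-a.e.\ for some $c>0$: otherwise a small perturbation of one representative would land in the chamber of the other, producing a common $\omega$-chamber. Pick a geodesic $\sigma^i$ in $X$ from $p_i$ to $q_i$, of length $L^i:=d(p_i,q_i)$ with $L^i/r_i\to L:=d_\omega(p,q)>0$. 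The geodesic traverses in order the walls $W_1^i,\ldots,W_{k_i}^i$ lying on the Bass--Serre path from $C^i$ to $C_q^i$, at parameter values $t_1^i<\cdots<t_{k_i}^i$. Because each $W_j^i$ separates $p_i$ from $q_i$, one has $d(p_i,W_j^i)+d(q_i,W_j^i)\geq L^i$; combined with the upper bounds supplied by $\sigma^i$, this forces the equalities $d(p_i,W_j^i)=t_j^i$ and $d(q_i,W_j^i)=L^i-t_j^i$. Fix $\epsilon:=\min(c,L)/4$. If some index $j=j(i)$ satisfies $t_j^i\in[\epsilon r_i,L^i-\epsilon r_i]$ $\omega$-a.e., then $W_\omega:=\omega\text{-}\lim W_{j(i)}^i$ is an internal $\omega$-wall with $d_\omega(p,W_\omega),d_\omega(q,W_\omega)\geq\epsilon>0$, with $p$ and $q$ strictly on opposite sides, and the separation statement is proved.

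The main obstacle is the gap case, where no such index $j(i)$ exists $\omega$-a.e. By monotonicity of $(t_j^i)$ combined with the bounds $t_1^i\leq L^i-c r_i$ and $t_{k_i}^i\geq c r_i$ derived from the inequalities $d(q_i,W_1^i)\geq d(q_i,C^i)\geq c r_i$ and $d(p_i,W_{k_i}^i)\geq d(p_i,C_q^i)\geq c r_i$, there must be a transition index $j^*=j^*(i)\in\{1,\ldots,k_i-1\}$ with $t_{j^*}^i<\epsilon r_i$ and $t_{j^*+1}^i>L^i-\epsilon r_i$. The intermediate chamber $D^i$ between $W_{j^*}^i$ and $W_{j^*+1}^i$ then has its two separating walls at mutual distance at least $L^i-2\epsilon r_i$, and in the asymptotic cone the point $p$ lies within $\epsilon$ of $W_{j^*,\omega}:=\omega\text{-}\lim W_{j^*}^i$, while $q$ lies within $\epsilon$ of $W_{j^*+1,\omega}$. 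A case analysis concludes: if $d_\omega(p,W_{j^*,\omega})>0$, then $W_{j^*,\omega}$ already essentially separates $p$ from $q$, since $d_\omega(q,W_{j^*,\omega})\geq L-\epsilon>0$; symmetrically, $d_\omega(q,W_{j^*+1,\omega})>0$ yields the separating wall $W_{j^*+1,\omega}$. Otherwise $p\in W_{j^*,\omega}\subseteq D_\omega$ and $q\in W_{j^*+1,\omega}\subseteq D_\omega$, so $D_\omega:=\omega\text{-}\lim D^i$ is a common $\omega$-chamber for $p$ and $q$, contradicting the hypothesis of the separation statement.
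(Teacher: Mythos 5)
The paper does not give a direct proof of this statement: it simply invokes Lemma~3.4 from Kapovich and Leeb's paper on quasi-isometries of Haken manifolds and notes that the argument carries over verbatim to irreducible HDG manifolds. You instead attempt a self-contained proof by tracking the walls crossed by a geodesic from $p_i$ to $q_i$, which is a natural idea, but the key estimate in your argument is wrong.

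You claim that because $W_j^i$ separates $p_i$ from $q_i$ one has $d(p_i,W_j^i)+d(q_i,W_j^i)\geq L^i$, and hence the equalities $d(p_i,W_j^i)=t_j^i$ and $d(q_i,W_j^i)=L^i-t_j^i$. The inequality is reversed: since $\sigma^i(t_j^i)\in W_j^i$, all that follows is $d(p_i,W_j^i)\leq t_j^i$ and $d(q_i,W_j^i)\leq L^i-t_j^i$, hence $d(p_i,W_j^i)+d(q_i,W_j^i)\leq L^i$. Separation by a codimension-one wall gives nothing in the other direction, and indeed $d(p_i,W_j^i)$ can be far smaller than $t_j^i$: the geodesic may run nearly parallel to the wall for a long time (for instance along the Euclidean fiber direction of a chamber) before crossing it, keeping $p_i$ uniformly close to $W_j^i$ even though $t_j^i$ is large. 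Consequently, the implication from $t_{j(i)}^i\in[\epsilon r_i,L^i-\epsilon r_i]$ to $d_\omega(p,W_\omega)\geq\epsilon$ and $d_\omega(q,W_\omega)\geq\epsilon$ is unjustified, and you cannot conclude that $p$ and $q$ lie strictly on opposite sides of $W_\omega$. The same missing lower bound reappears in the gap case, where you assert $d_\omega(q,W_{j^*,\omega})\geq L-\epsilon$ from nothing more than an upper bound on $d(q_i,W_{j^*}^i)$; as a result the closing case analysis is incomplete, since the situation $p\in W_{j^*,\omega}$ with $d_\omega(q,W_{j^*+1,\omega})>0$ is not covered (the ``symmetric'' case invokes the very same unavailable estimate). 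A secondary concern is that the ``Helly-type argument'' reducing the proposition to the two-point separation statement is only asserted: a point of $X_\omega$ can lie in infinitely many $\omega$-chambers, and the relevant combinatorics sits in the ultralimit of the Bass--Serre trees rather than in a fixed simplicial tree, so this reduction needs its own justification.
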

\begin{proof}
 In the $3$-dimensional case, this is Lemma 3.4 in Kapovich and Leeb's paper~\cite{kapleenew}. Their proof applies verbatim also to the case of irreducible HDG manifolds (thus providing a positive
 answer to the problem posed at page 122 in~\cite{FLS}).
\end{proof}

As already mentioned in the introduction, a key ingredient for the analysis of the geometry of
$X_\omega$ (or of $X$) is the understanding of which subspaces separate (or coarsely separate) some relevant subsets
of $X_\omega$ (or of $X$). 
Let $S(W_\omega)$ be a side of $W_\omega$, and let $C_\omega$
be the unique $\omega$-chamber of $X_\omega$ which intersects $S(W_\omega)$
and is adjacent to $W_\omega$. 
 A fiber of $W_\omega$ associated
to $S(W_\omega)$ is a fiber of $C_\omega$
that is contained in $W_\omega$. 
The following observation  follows from the fact that the gluings defining our manifold $M$ are transverse, and it is crucial to our purposes:

\begin{lemma}\label{fiber:intersection:lem}
Let $S^+(W_\omega)$ and $S^-(W_\omega)$ be the sides of 
the internal $\omega$-wall $W_\omega$,
and let $F^+_\omega$, $F^-_\omega$ be  fibers of $W_\omega$ associated respectively to
$S^+(W_\omega)$, $S^-(W_\omega)$. Then $|F^+_\omega\cap F^-_\omega|\leq 1$.
\end{lemma}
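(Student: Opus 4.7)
The plan is to argue by contradiction: suppose $p,q \in F^+_\omega \cap F^-_\omega$ with $D := d_\omega(p,q) > 0$, and derive $D = 0$ from the transversality built into Definition~\ref{irr:def}. First I would pick representatives from both fibers: write $F^\pm_\omega = \omega\text{-}\lim F^\pm_i$ with $F^\pm_i$ a fiber of $C^\pm_i \subset W_i$, and choose $p^\pm_i, q^\pm_i \in F^\pm_i$ so that $p = [(p^+_i)] = [(p^-_i)]$ and $q = [(q^+_i)] = [(q^-_i)]$. By definition of the equivalence underlying the $\omega$-limit,
$$
\omega\text{-}\lim \frac{d(p^+_i, p^-_i)}{r_i} = 0 \qquad \text{and} \qquad \omega\text{-}\lim \frac{d(q^+_i, q^-_i)}{r_i} = 0,
$$
while $\omega\text{-}\lim d(p^+_i, q^+_i)/r_i = D$.

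The core step is to analyze, $\omega$-a.e., a single wall $W_i$. The wall $W_i$ is (quasi-)isometric to $\mathbb{R}^{n-1}$, and $F^\pm_i$ are affine translates of linear subspaces $V^\pm_i \subseteq \mathbb{R}^{n-1}$, where $V^+_i$ and $V^-_i$ are the fiber directions coming from the two adjacent pieces. Definition~\ref{irr:def} (and, in dimension $3$, the JSJ hypothesis discussed in Subsection~\ref{JSJ:subsec}) says precisely that $V^+_i \cap V^-_i = \{0\}$. Since $M$ is compact, only finitely many isomorphism types of (piece, wall, gluing) configurations occur up to the deck action, so there is a uniform constant $C > 0$, independent of $i$, such that
$$
\|v^+\| \leq C\,\|v^+ - v^-\| \qquad \text{for all } v^+ \in V^+_i,\ v^- \in V^-_i.
$$
This is just the statement that the angle between $V^+_i$ and $V^-_i$ is bounded below away from $0$ uniformly in $i$.

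Applying this inequality with $v^+ := p^+_i - q^+_i \in V^+_i$ (this vector lies in $V^+_i$ because $F^+_i$ is an affine copy of $V^+_i$) and $v^- := p^-_i - q^-_i \in V^-_i$, and using the triangle inequality
$$
\|v^+ - v^-\| \leq d(p^+_i, p^-_i) + d(q^+_i, q^-_i),
$$
I would obtain
$$
d(p^+_i, q^+_i) \leq C\bigl(d(p^+_i, p^-_i) + d(q^+_i, q^-_i)\bigr) = o(r_i).
$$
Dividing by $r_i$ and passing to the $\omega$-limit forces $D = d_\omega(p,q) = 0$, contradicting our assumption.

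The only genuinely delicate ingredient is the uniform lower bound on the angle between $V^+_i$ and $V^-_i$; this is where irreducibility of $M$ and compactness of $M$ are both used in an essential way (pointwise transversality on each wall is not by itself enough to run the limit argument, but finitely many orbits of walls turn pointwise transversality into the uniform bound). Every other step is routine linear algebra in the Euclidean wall $W_i \cong \mathbb{R}^{n-1}$ together with the standard manipulation of representatives in an ultralimit.
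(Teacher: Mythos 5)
Your proof is correct, and since the paper merely asserts this lemma (``follows from the fact that the gluings\ldots are transverse'') without writing out an argument, your write-up fills in exactly the details the paper leaves implicit. The two real ingredients are both present: (i) the algebraic transversality condition $\psi_*(\mathbb{Z}^{d^+})\cap\mathbb{Z}^{d^-}=\{0\}$ in Definition~\ref{irr:def} does imply that the associated \emph{linear spans} $V^\pm_i\subset\mathbb{R}^{n-1}$ intersect trivially (since two rational subspaces whose spans meet non-trivially must have lattices intersecting non-trivially), and (ii) cocompactness of the deck action gives finitely many orbits of internal walls, upgrading pointwise transversality to a uniform angle bound $\|v^+\|\leq C\|v^+-v^-\|$ independent of $i$. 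The linear-algebra step and the passage to the $\omega$-limit then force $D=0$, as you say. One small point worth making explicit: in the general HDG case the wall is only \emph{quasi}-isometrically embedded in $X$, so the representatives $p^\pm_i$ have $d_{W_i}(p^+_i,p^-_i)=o(r_i)$ because the ambient $X$-distance is $o(r_i)$ and the embedding constants are uniform; this is exactly the kind of $O(1)$ error that vanishes in the rescaled limit, so your estimate goes through unchanged.
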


If $P,P'$ are distinct pieces of a tree-graded space $Y$, then there exist $p\in P$, $p'\in P'$
such that,
for any continuous path $\gamma\colon [0,1]\to Y$ with $\gamma (0)\in P$ and $\gamma (1)\in P'$,
we have $p, p'\in {\rm Im}\, \gamma$ (see e.g.~\cite[Lemma 8.8]{FLS}). Since $\omega$-chambers are 
(bi-Lipschitz homeomorphic to) products
of tree-graded spaces with Euclidean factors, this immediately implies that,
if $W_\omega$ and $W'_\omega$ are distinct $\omega$-walls of the $\omega$-chamber
$C_\omega$, then there exists an $\omega$-fiber $F_\omega\subseteq W_\omega$ 
of $C_\omega$ such that
every continuous path in $C_\omega$ joining a point in $W_\omega$ with a point
in $W'_\omega$ has to pass through $F_\omega$.  With some work it is possible to extend this result
to pairs of $\omega$-walls
which are not contained in the same $\omega$-chamber:

\begin{lemma}[Lemma 8.24 in \cite{FLS}]\label{inters:lem}
Let $W_\omega,W'_\omega$ be distinct $\omega$-walls, and
let
$S(W_\omega)$ be the side of $W_\omega$ containing $W'_\omega\setminus W_\omega$.
Then
there exists an $\omega$-fiber $F_\omega$ of $W_\omega$ such that
\begin{enumerate}
 \item $F_\omega$ is associated to $S(W_\omega)$, and
 \item every Lipschitz path joining a point in $W'_\omega$ with
a point in $W_\omega$ passes through $F_\omega$. 
\end{enumerate}
\end{lemma}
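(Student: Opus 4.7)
The plan is to reduce to the single-chamber statement preceding the lemma, by identifying a single wall of the chamber adjacent to $W_\omega$ on side $S(W_\omega)$ through which every Lipschitz path from $W'_\omega$ to $W_\omega$ must pass. Let $C_\omega$ be the unique $\omega$-chamber on $S(W_\omega)$ adjacent to $W_\omega$. I define an auxiliary wall $W''_\omega$ of $C_\omega$, distinct from $W_\omega$, as follows: if $W'_\omega$ is itself a wall of $C_\omega$, set $W''_\omega = W'_\omega$; otherwise, $W'_\omega$ is a wall of two chambers strictly farther from $C_\omega$ in the Bass-Serre tree associated to the decomposition of $X_\omega$ into $\omega$-chambers, and I let $W''_\omega$ be the unique wall of $C_\omega$ through which the Bass-Serre geodesics from $C_\omega$ to those two chambers exit. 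This $W''_\omega$ is well defined and lies on $S(W_\omega)$ since both chambers do.

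Next, I apply the single-chamber statement preceding the lemma to the distinct walls $W_\omega, W''_\omega$ of $C_\omega$, obtaining an $\omega$-fiber $F_\omega$ of $C_\omega$ contained in $W_\omega$ (hence associated to $S(W_\omega)$) such that every continuous path in $C_\omega$ from $W_\omega$ to $W''_\omega$ meets $F_\omega$. Concretely, in the product decomposition $C_\omega \cong Y \times \mathbb{R}^l$ of Proposition~\ref{omegachambers} with $W_\omega, W''_\omega$ corresponding to distinct pieces $P, P''$ of the tree-graded space $Y$, this $F_\omega$ has the form $\{p\} \times \mathbb{R}^l$, where $p \in P$ is the unique point through which every path in $Y$ from $P$ to $P''$ must pass. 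In particular $p$ is a cut-point of $Y$, so $Y \setminus \{p\}$ splits into open subsets $Y_1 \supseteq P \setminus \{p\}$ and $Y_2 \supseteq P'' \setminus \{p\}$, yielding a splitting $C_\omega \setminus F_\omega \supseteq (Y_1 \times \mathbb{R}^l) \sqcup (Y_2 \times \mathbb{R}^l)$ into disjoint open subsets.

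The heart of the argument is to upgrade this local separation: every Lipschitz path in $X_\omega \setminus F_\omega$ joining a point on the $Y_2$-side to a point on the $Y_1$-side must cross $F_\omega$. Indeed, each chamber of $X_\omega$ on side $S(W_\omega)$ other than $C_\omega$ is attached to $C_\omega$ through a unique wall of $C_\omega$, which corresponds to a piece of $Y$ contained entirely in $Y_1 \cup \{p\}$ or in $Y_2 \cup \{p\}$. Since the Bass-Serre tree of $X_\omega$ contains no loops, any excursion from the $Y_2$-chambers to the $Y_1$-chambers must re-enter $C_\omega$; but inside $C_\omega$ the two sides are disjoint in $C_\omega \setminus F_\omega$, so crossing $F_\omega$ is unavoidable. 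Applied to $\gamma$: the choice of $W''_\omega$ places $\gamma(0) \in W'_\omega$ either in $F_\omega$ or on the $Y_2$-side, while $\gamma(1) \in W_\omega$ lies either in $F_\omega$ or in $(P \setminus \{p\}) \times \mathbb{R}^l \subseteq Y_1 \times \mathbb{R}^l$; in the non-degenerate case the separation forces $\gamma$ to meet $F_\omega$, as desired.

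The main obstacle is the simultaneous bookkeeping of the two tree structures at play—the Bass-Serre tree organizing the $\omega$-chambers of $X_\omega$ and the tree-graded structure of the factor $Y$ inside $C_\omega$—needed to ensure that $F_\omega$ acts as a bottleneck for both, and that no ``shortcut'' through distant chambers bypasses it. Once the no-loop property of the Bass-Serre tree and the cut-point property of $p$ in $Y$ are properly combined, everything else is an organized application of the single-chamber case.
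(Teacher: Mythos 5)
Your reduction to the single-chamber case, followed by propagation of the separation across the whole space via the Bass–Serre tree, is the right idea and matches the approach the survey indicates (``with some work it is possible to extend this result''); modulo the points below the argument goes through. A few things need shoring up, though. First, the dichotomy $Y\setminus\{p\} = Y_1 \sqcup Y_2$ is not quite right: a tree-graded space minus a cut-point typically has infinitely many components, so you should take $Y_1$ to be the component containing $P\setminus\{p\}$ and collect everything else on the other side. Second, you only discuss chambers on the side $S(W_\omega)$, but the path may wander into $S'(W_\omega)$ as well; the point to make is that those chambers are attached to $C_\omega$ via $W_\omega = P\times\mathbb{R}^l$, and since $P\setminus\{p\}$ is connected all of $W_\omega\setminus F_\omega$ lies on the $Y_1$-side, so excursions into $S'(W_\omega)$ also preserve the side (or force the path through $F_\omega$).

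Third, and most importantly, the implicit ``side function'' on $X_\omega\setminus F_\omega$ is only well-defined and locally constant because, for each piece $Q$ of $Y$, the set $Q\setminus\{p\}$ is connected, so that each wall $Q\times\mathbb{R}^l$ of $C_\omega$, with $F_\omega$ removed, lies on a single side and every chamber beyond it in the Bass–Serre tree is consistently assigned a side. You never state why $Q\setminus\{p\}$ is connected; this requires $\dim Q = n-l-1 \ge 2$, i.e.\ that fibers have codimension at least $2$ in walls. This holds in the HDG setting of the cited Lemma~8.24 of~\cite{FLS}, but it is exactly what fails for Seifert pieces of $3$-manifolds (where $\dim Q=1$), so the hypothesis should be made explicit rather than left tacit.
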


As every point in $X_\omega$ is contained in an $\omega$-wall, Lemma~\ref{inters:lem} implies the following:

\begin{cor}\label{inters:cor}
Let $W_\omega$ be an $\omega$-wall, 
let $p\in X_\omega\setminus W_\omega$, and let 
$S(W_\omega)$ be the side of $W_\omega$ containing 
$p$.
Then
there exists an $\omega$-fiber $F_\omega$ of $W_\omega$ associated to $S(W_\omega)$
such that 
every Lipschitz path joining $p$ with
$W_\omega$ passes through $F_\omega$. 
\end{cor}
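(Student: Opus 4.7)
The plan is to deduce this corollary directly from Lemma~\ref{inters:lem} by choosing an auxiliary $\omega$-wall through the point $p$ and then verifying that the fiber produced by the lemma does the job. The key observation, explicitly recorded in the text immediately before the corollary, is that \emph{every} point of $X_\omega$ lies on some $\omega$-wall; this is what allows us to turn a statement about pairs of walls into a statement about walls and points.

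First, I would pick an $\omega$-wall $W'_\omega$ with $p\in W'_\omega$. Since $p\notin W_\omega$, the walls $W_\omega$ and $W'_\omega$ are distinct, so Lemma~\ref{inters:lem} applies to the pair $(W_\omega,W'_\omega)$. Next, I need to identify the side of $W_\omega$ that contains $W'_\omega\setminus W_\omega$ with the given side $S(W_\omega)$. The point $p$ witnesses that $W'_\omega\setminus W_\omega$ meets $S(W_\omega)$; since $W'_\omega$ is path-connected (indeed bi-Lipschitz homeomorphic to $\mathbb{R}^{n-1}$), any Lipschitz path in $W'_\omega\setminus W_\omega$ between points on different sides would have to pass through $W_\omega$ (by the discussion following Definition~\ref{side}), which is impossible. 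Hence $W'_\omega\setminus W_\omega$ sits entirely inside $S(W_\omega)$, so the hypothesis of Lemma~\ref{inters:lem} is met with precisely this side $S(W_\omega)$.

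Now Lemma~\ref{inters:lem} supplies an $\omega$-fiber $F_\omega$ of $W_\omega$ that is associated to $S(W_\omega)$ and through which every Lipschitz path joining a point of $W'_\omega$ to a point of $W_\omega$ must pass. In particular, since $p\in W'_\omega$, every Lipschitz path from $p$ to $W_\omega$ passes through $F_\omega$, which is exactly the claim.

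The only step requiring any thought is the verification that $W'_\omega\setminus W_\omega$ lies in a single side of $W_\omega$; apart from this, the corollary is a direct specialization of the lemma. I do not anticipate a real obstacle here, since the separation property of $\omega$-walls together with the path-connectedness of the Euclidean-like $\omega$-wall $W'_\omega$ makes the argument essentially automatic.
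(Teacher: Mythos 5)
Your approach is the same as the paper's: since every point of $X_\omega$ lies on some $\omega$-wall, pick one through $p$ and apply Lemma~\ref{inters:lem}. The paper derives the corollary in exactly this one-line fashion.

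One remark on the intermediate step you added. You try to verify that $W'_\omega\setminus W_\omega$ lies entirely in one side of $W_\omega$, arguing that since $W'_\omega$ is path-connected, a Lipschitz path in $W'_\omega\setminus W_\omega$ between points on opposite sides would have to cross $W_\omega$, a contradiction. As written this does not go through: path-connectedness of $W'_\omega$ does not yield path-connectedness of $W'_\omega\setminus W_\omega$ (paths in $W'_\omega$ between the two points might simply traverse $W_\omega\cap W'_\omega$), and in the $3$-dimensional case $W'_\omega\cong\mathbb{R}^2$ can genuinely be disconnected by removing a $1$-dimensional subset. However, this verification is not needed: the phrasing of Lemma~\ref{inters:lem} ("let $S(W_\omega)$ be the side of $W_\omega$ containing $W'_\omega\setminus W_\omega$") already asserts that $W'_\omega\setminus W_\omega$ lies in a single well-defined side, and since $p\in W'_\omega\setminus W_\omega$ that side must be the one containing $p$. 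Dropping the extra argument and citing the lemma's implicit assertion makes your proof correct and identical to the paper's.
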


\subsection{A characterization of bi-Lipschitz flats in higher dimension}
Throughout this subsection we assume that $n=\dim M\geq 4$.
As already mentioned in Subsection~\ref{versus:sub}, in this case  the fact that fibers have higher codimension
allows us to provide an easy characterization of $\omega$-walls. 

A \emph{bi-Lipschitz $m$-flat} in $X_\omega$ is the image of a 
bi-Lipschitz embedding $f\colon \mathbb{R}^m\to X_\omega$.
This section is aimed at proving that $\omega$-walls are the only
bi-Lipschitz $(n-1)$-flats in $X_\omega$. 
We say that a metric space is L.-p.-connected if
any two points in it may be joined by a Lipschitz path. The following lemma 
provides a fundamental step towards the desired characterization of $\omega$-walls, so we give a complete
proof of it. It
breaks down in the $3$-dimensional case. 

\begin{lemma}\label{fundamental:lem}
Let $A\subseteq X_\omega$ be a bi-Lipschitz $(n-1)$-flat. Then for every $\omega$-fiber $F_\omega$
the set $A\setminus F_\omega$ is L.-p.-connected.
\end{lemma}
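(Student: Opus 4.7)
The plan is to reduce the statement to a purely Euclidean fact: removing a subset of Hausdorff dimension at most $n-3$ from $\mathbb{R}^{n-1}$ leaves a space which is connected by polygonal (hence Lipschitz) arcs. Since we are in dimension $n\geq 4$, we have $n-3\leq n-2<n-1$, and this dimension inequality is what will do the work.

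First, I would invoke the structural description of $\omega$-chambers recalled earlier in the section: every $\omega$-fiber $F_\omega$ is bi-Lipschitz homeomorphic to $\mathbb{R}^h$ for some $h\leq n-3$. Fix a bi-Lipschitz homeomorphism $\phi\colon\mathbb{R}^{n-1}\to A$ and set $E:=\phi^{-1}(A\cap F_\omega)\subseteq\mathbb{R}^{n-1}$. Since bi-Lipschitz maps preserve Hausdorff dimension and $A\cap F_\omega$ sits inside $F_\omega$, we get $\dim_H E\leq h\leq n-3$. Thus it suffices to show that $\mathbb{R}^{n-1}\setminus E$ is Lipschitz-path-connected: pushing any such Lipschitz path back via $\phi$ produces a Lipschitz path in $A\setminus F_\omega$.

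For the Euclidean statement, given $\tilde p,\tilde q\in\mathbb{R}^{n-1}\setminus E$, I would construct a two-segment polygonal arc $\tilde p\to m\to\tilde q$ avoiding $E$. Consider the set $B_{\tilde p}:=\{m\in\mathbb{R}^{n-1} : [\tilde p,m]\cap E\neq\emptyset\}$, and define $B_{\tilde q}$ analogously. The set $B_{\tilde p}$ is the image of $E\times[1,+\infty)$ under the map $(x,t)\mapsto \tilde p+t(x-\tilde p)$; this map is Lipschitz on each $E\times[1,N]$, so writing $B_{\tilde p}$ as a countable union of Lipschitz images and using that Lipschitz maps do not increase Hausdorff dimension yields $\dim_H B_{\tilde p}\leq \dim_H E+1\leq n-2$. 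In particular $B_{\tilde p}$ and $B_{\tilde q}$ have $(n-1)$-dimensional Lebesgue measure zero, so their union does not fill $\mathbb{R}^{n-1}$; picking any $m$ outside $B_{\tilde p}\cup B_{\tilde q}$, both segments $[\tilde p,m]$ and $[m,\tilde q]$ avoid $E$, and their concatenation is the desired Lipschitz arc.

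The main technical point, and the only step where something could go wrong, is the dimension count for the cones $B_{\tilde p}, B_{\tilde q}$. This is exactly where $n\geq 4$ enters: the slack $\dim_H B_{\tilde p}\leq n-2<n-1$ is what ensures that we can find a good intermediate point $m$. In the three-dimensional setting one has $n-1=2$, and an $\omega$-fiber coming from a Seifert piece is bi-Lipschitz to $\mathbb{R}$, so $E$ may be one-dimensional and the corresponding cone $B_{\tilde p}$ can sweep out, say, a half-plane; the polygonal-path construction then breaks down, consistent with the remark in the excerpt that the lemma fails in dimension three and the $3$-dimensional case demands a separate treatment using non-positive curvature.
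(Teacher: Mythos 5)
Your argument is correct, and it takes a genuinely different route from the paper's. The paper proves the lemma via algebraic topology: writing $f\colon\mathbb{R}^{n-1}\to A$ for a bi-Lipschitz parametrization, the set $f^{-1}(F_\omega)$ is a closed subset of $\mathbb{R}^{n-1}$ homeomorphic to a closed subset $S$ of $\mathbb{R}^l\subseteq\mathbb{R}^{n-1}$ with $l\leq n-3$; by the Jordan--Alexander complement theorem (the homology of the complement of a closed subset of $\mathbb{R}^{n-1}$ depends only on its homeomorphism type, citing Dold), $\mathbb{R}^{n-1}\setminus f^{-1}(F_\omega)$ has the same $H_0$ as $\mathbb{R}^{n-1}\setminus S$, which is connected since $S$ has codimension at least two; being a connected open subset of Euclidean space, this complement is piecewise-linearly connected, and $f$ transports such paths to Lipschitz paths in $A\setminus F_\omega$. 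You instead reduce to the metric fact that bi-Lipschitz maps preserve Hausdorff dimension, bound $\dim_H E\leq n-3$, and construct the polygonal arc explicitly by showing the two shadow cones $B_{\tilde p}, B_{\tilde q}$ have Hausdorff dimension at most $n-2$, hence Lebesgue measure zero, so an intermediate point $m$ exists. Both arguments pivot on the same codimension $\geq 2$ phenomenon and both collapse when $n=3$; the trade-off is that the paper's proof is shorter once one accepts the Jordan--Alexander input, while yours is self-contained, avoids any appeal to duality theory, does not need $f^{-1}(F_\omega)$ to be closed, and produces the Lipschitz path constructively rather than by first establishing connectedness and then invoking local polygonal connectivity of open sets.
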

\begin{proof}
Let $f\colon \mathbb{R}^{n-1}\to C_\omega$ be a bi-Lipschitz embedding
such that $f(\mathbb{R}^{n-1})=A$, 
and let $l\leq n-3$ be such that $F_\omega$ is bi-Lipschitz homeomorphic
to $\mathbb{R}^l$.
The set $f^{-1}(F_\omega)$ is a closed subset
of $\mathbb{R}^{n-1}$ which is bi-Lipschitz homeomorphic to a subset
of $\mathbb{R}^l$. But it is known that the complements
of two homeomorphic closed subsets of $\mathbb{R}^{n-1}$ have the same singular homology
(see e.g.~\cite{dold}), so $\mathbb{R}^{n-1}\setminus f^{-1}(F_\omega)$ is
path-connected. It is immediate to check that any two points in a connected
open subset of $\mathbb{R}^{n-1}$ are joined by a piecewise linear path,
so $\mathbb{R}^{n-1}\setminus f^{-1}(F_\omega)$ is L.-p.-connected.
The conclusion follows from the fact that $f$ takes Lipschitz paths
into Lipschitz paths.
\end{proof}

Let now $A\subseteq X_\omega$ be a bi-Lipschitz $(n-1)$-flat. We first observe that $A$ is not essentially separated by any $\omega$-wall of $X_\omega$.
In fact, if this were not the case, then there would exist 
an $\omega$-wall $W_\omega$ and points  $p,q\in A$ on opposite sides of $W_\omega$.
Then 
the fiber $F_\omega$ of $W_\omega$ such that every path joining $p$ with
$W_\omega$ passes through $F_\omega$ (see Corollary~\ref{inters:cor}) would disconnect $A$, against
Lemma~\ref{fundamental:lem}.
By Proposition~\ref{utile} we can then suppose that $A$ is contained in an $\omega$-chamber
$C_\omega$. 
 Recall that $C_\omega$ is homeomorphic to a product $Y\times \mathbb{R}^l$,
where $Y$ is a tree-graded space and $l\leq n-3$. 
Lemma~\ref{fundamental:lem} implies that the projection of $A$ onto $Y$ does not have cut-points, so it is contained
in a piece of $Y$. This is equivalent to say that $A$ is contained in an $\omega$-wall of $C_\omega$, so it is actually
equal to such an $\omega$-wall by invariance of domain. 
We have thus proved the following:

\begin{prop}\label{wall:char:prop}
Let $A$ be a bi-Lipschitz $(n-1)$-flat in $X_\omega$.
Then
$A$ is an $\omega$-wall.
\end{prop}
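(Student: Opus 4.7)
My plan is to combine the three main tools assembled in this section—Lemma~\ref{fundamental:lem}, Proposition~\ref{utile} and Corollary~\ref{inters:cor}—with the product structure of $\omega$-chambers, arguing in three stages: first localize $A$ inside a single $\omega$-chamber, then project onto the tree-graded factor and conclude that the projection lies in a single piece, and finally invoke invariance of domain to upgrade inclusion to equality.

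First I would show that $A$ cannot be essentially separated by any internal $\omega$-wall $W_\omega$. Indeed, if some $W_\omega$ had points of $A$ on both sides, pick $p$ on one side and apply Corollary~\ref{inters:cor}: there is an $\omega$-fiber $F_\omega$ of $W_\omega$ through which every Lipschitz path from $p$ to $W_\omega$ must pass. Since points on the opposite side of $W_\omega$ can only be reached from $p$ by crossing $W_\omega$, every Lipschitz path in $A$ from $p$ to any point on the other side would have to cross $F_\omega$. This contradicts Lemma~\ref{fundamental:lem}, which says $A\setminus F_\omega$ is L.-p.-connected. Hence no $\omega$-wall essentially separates $A$, and Proposition~\ref{utile} gives an $\omega$-chamber $C_\omega\supseteq A$.

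Second, I would exploit the bi-Lipschitz homeomorphism $\varphi\colon C_\omega\to Y\times\mathbb{R}^l$, where $Y$ is tree-graded with pieces bi-Lipschitz to $\mathbb{R}^{n-l-1}$, $l\leq n-3$. Let $\pi_Y\circ\varphi$ be the composition with projection to $Y$, and set $B=\pi_Y(\varphi(A))$. I need $B$ to have no cut-points, since in a tree-graded space a path-connected subset without cut-points is contained in a single piece (by Lemma~2.15 of~\cite{DS}, cited in the text). To see $B$ has no cut-points, note that every fiber of $C_\omega$ has the form $\varphi^{-1}(\{q\}\times\mathbb{R}^l)$, so removing a preimage $\pi_Y^{-1}(q)\cap\varphi(A)$ from $\varphi(A)$ corresponds to removing an $\omega$-fiber from $A$, and by Lemma~\ref{fundamental:lem} the result is L.-p.-connected; projecting back to $Y$ shows that $B\setminus\{q\}$ is path-connected for every $q\in B$. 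Consequently $B$ sits in a single piece $P\subseteq Y$, which means $A\subseteq \varphi^{-1}(P\times\mathbb{R}^l)$, an $\omega$-wall $W_\omega$ of $C_\omega$.

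Finally, both $A$ and $W_\omega$ are bi-Lipschitz homeomorphic to $\mathbb{R}^{n-1}$, and $A\subseteq W_\omega$, so invariance of domain forces $A=W_\omega$. The main obstacle I foresee is the second stage: one must be careful that the projection argument really delivers the cut-point-free conclusion, since a priori $\pi_Y\circ\varphi$ need not be injective on $A$, and one has to check that the Lipschitz path-connectedness of $A\setminus F_\omega$ transfers correctly to path-connectedness of $B\setminus\{q\}$ in $Y$. Everything else is either a direct application of a lemma proved earlier or a standard topological fact.
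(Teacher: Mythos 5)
Your argument is correct and follows the paper's own proof step by step: rule out essential separation using Corollary~\ref{inters:cor} and Lemma~\ref{fundamental:lem}, localize $A$ in a chamber via Proposition~\ref{utile}, project to the tree-graded factor and use Lemma~\ref{fundamental:lem} again to kill cut-points, then finish by invariance of domain. The only difference is that you spell out the cut-point step — noting that $(\pi_Y\circ\varphi)^{-1}(q)\cap A = A\cap F_\omega$ for a fiber $F_\omega$, so Lemma~\ref{fundamental:lem} applies directly — which the paper merely asserts; this is a welcome clarification rather than a gap.
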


\begin{cor}\label{omegawalls:preserved}
 Let $M,M'$ be irreducible HDG manifolds with universal coverings $X,X'$, and let $f\colon X\to X'$ be a quasi-isometry.
 Then the map $f_\omega\colon X_\omega\to X'_\omega$ induced by $f$ on the asymptotic cones takes any $\omega$-wall
 of $X_\omega$ onto an $\omega$-wall of $X'_\omega$.
\end{cor}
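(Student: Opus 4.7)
The strategy is to combine two ingredients already in place: first, a quasi-isometry $f\colon X \to X'$ induces a bi-Lipschitz homeomorphism $f_\omega\colon X_\omega \to X'_\omega$ between the asymptotic cones (as recalled in Section~\ref{asymptotic:sec}); second, Proposition~\ref{wall:char:prop} characterizes $\omega$-walls intrinsically as exactly the bi-Lipschitz $(n-1)$-flats in the asymptotic cone. So the plan is simply to transport this intrinsic characterization across $f_\omega$.

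Concretely, given an $\omega$-wall $W_\omega \subseteq X_\omega$, I would first note that $W_\omega$ is bi-Lipschitz homeomorphic to $\mathbb{R}^{n-1}$. This is immediate from the structural description of $\omega$-chambers recalled before Proposition~\ref{wall:char:prop}: if $C_\omega \supseteq W_\omega$ has the model $Y \times \mathbb{R}^l$ with $Y$ a tree-graded space whose pieces are bi-Lipschitz to $\mathbb{R}^{n-l-1}$, then $W_\omega$ corresponds to $P \times \mathbb{R}^l$ for some piece $P$ of $Y$, and is therefore bi-Lipschitz to $\mathbb{R}^{n-1}$. Since $f_\omega$ is a bi-Lipschitz homeomorphism, the image $f_\omega(W_\omega) \subseteq X'_\omega$ is itself a bi-Lipschitz $(n-1)$-flat.

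To conclude, I would invoke Proposition~\ref{wall:char:prop} for $X'_\omega$ (it applies identically, since $M'$ is an irreducible HDG manifold and must have the same dimension $n$ as $M$, forced by the existence of a quasi-isometry between the universal covers): every bi-Lipschitz $(n-1)$-flat in $X'_\omega$ is an $\omega$-wall. Hence $f_\omega(W_\omega)$ is an $\omega$-wall of $X'_\omega$, and the map $W_\omega \to f_\omega(W_\omega)$ is onto because $f_\omega$ is a homeomorphism, not merely an embedding. I do not expect any real obstacle here: all the substantive work has already been done in establishing Proposition~\ref{wall:char:prop}, and the corollary is essentially a direct transport of that intrinsic characterization across $f_\omega$.
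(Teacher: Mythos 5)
Your proof is correct and follows exactly the paper's approach: the paper states the corollary immediately after Proposition~\ref{wall:char:prop} precisely because it is an immediate consequence of that characterization together with the fact that $f_\omega$ is a bi-Lipschitz homeomorphism. Your aside about $\dim M = \dim M'$ is a reasonable point to flag (the paper leaves it implicit), and your justification is sound.
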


\subsection{The $3$-dimensional case}
We would like to extend Corollary~\ref{omegawalls:preserved} to the $3$-dimensional case.
Let $M,M'$ be closed irreducible non-geometric $3$-manifolds with universal coverings $X,X'$. We can suppose that $M,M'$ are non-positively curved. We will say that an $\omega$-chamber
of the asymptotic cone $X_\omega$ (or $X'_\omega$) is hyperbolic (resp. Seifert) if it is the $\omega$-limit of chambers covering a hyperbolic (resp. Seifert) piece
of $M$ (or $M'$). Observe that every $\omega$-chamber is either Seifert or hyperbolic. Being bi-Lipschitz homeomorphic to the product of a real tree with the real line, every Seifert $\omega$-chamber
contains many flats that are not $\omega$-walls, so Proposition~\ref{wall:char:prop} cannot hold in this case. 
Therefore, in order to obtain Corollary~\ref{omegawalls:preserved}
some additional arguments are needed, that we briefly outline here.
The reader is addressed to~\cite{kapleenew}
for complete proofs.

We first collect some facts about (bi-Lipschitz) flats in $X_\omega$.

\begin{enumerate}
\item Any bi-Lipschitz flat contained in an $\omega$-chamber is a flat (this easily follows from the explicit description of the geometry of the chambers,
together with the basic properties of tree-graded spaces and the fact that, thanks to non-positive curvature, $\omega$-chambers are isometrically embedded in $X_\omega$).
 \item Any flat $Z_\omega\subseteq X_\omega$ must be contained in a single $\omega$-chamber. In fact, otherwise $Z_\omega$ is essentially separated 
 by an $\omega$-wall $W_\omega$. The arguments described in the previous subsection may be exploited to show that no $\omega$-chamber adjacent
to $W_\omega$ can be hyperbolic, i.e.~the two $\omega$-chambers adjacent to $W_\omega$ must both be Seifert. Moreover, $Z_\omega\cap W_\omega$ must contain two transversal fibers of
$W_\omega$, and this implies that indeed $Z_\omega=W_\omega$, a contradiction.
\item A bi-Lipschitz flat $B\subseteq X_\omega$ is an $\omega$-wall which is not adjacent to any Seifert $\omega$-chamber if and only
of its intersection with any other bi-Lipschitz flat in $X_\omega$ contains at most one point. In fact, if the latter condition is true, then $B$ cannot be essentially separated by any $\omega$-wall,
so it is contained in an $\omega$-chamber that cannot be Seifert (because otherwise $B$ would intersect many other flats in more than one point). Conversely, if $B$ is an $\omega$-wall
not adjacent to any Seifert $\omega$-chamber and $B'$ is a bi-Lipschitz flat intersecting $F$, then the closure of any component of
$B'\setminus B$ intersects $B$ in one point, which of course cannot disconnect $B$. Therefore $B'\setminus B$ is connected and $B'\cap B$ is one point.
\item If $T$ is a real tree which branches at every point, then the image $\Omega$ of any bi-Lipschitz embedding $f\colon T\times \mathbb{R}\to X_\omega$ is contained
in a Seifert $\omega$-chamber. In fact, if $W_\omega$ is an $\omega$-wall such that $S^\pm(W_\omega)\cap \Omega=\Omega^\pm$ are both non-empty, then
the boundaries $l^\pm$ of $\Omega^\pm$ are transversal fibers in $W_\omega$. However, their inverse image under $f$ separate $T\times\mathbb{R}$,
so they should be parallel lines in $T\times\mathbb{R}$, and this contradicts the fact that $f$ is bi-Lipschitz. The conclusion now follows from Proposition~\ref{utile}.
\end{enumerate}

These facts imply the following:

\begin{prop}\label{primifatti}
 Let $f\colon X_\omega\to X'_\omega$ be a bi-Lipschitz homeomorphism.
 Then:
 \begin{enumerate}
  \item[(i)]
  $f$ maps flats to flats;
  \item[(ii)]
 $f$ maps each $\omega$-wall that is not adjacent to a Seifert $\omega$-chamber into an $\omega$-wall of the same kind;
 \item[(iii)]
 $f$ maps any Seifert $\omega$-chamber of $X_\omega$ into a Seifert $\omega$-chamber of $X'_\omega$.
 \end{enumerate}
\end{prop}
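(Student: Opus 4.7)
My plan is to prove (ii), (iii), (i) in that order, since my argument for (i) will reduce quickly to the other two via fact (1). For (ii), I use fact (3), which gives a purely metric characterization of $\omega$-walls not adjacent to any Seifert $\omega$-chamber as exactly the bi-Lipschitz flats meeting every other bi-Lipschitz flat of $X_\omega$ in at most one point. Since $f$ and $f^{-1}$ send bi-Lipschitz flats to bi-Lipschitz flats, this characterization is bi-Lipschitz invariant: for an $\omega$-wall $W_\omega$ as in the statement and any bi-Lipschitz flat $B' \subseteq X'_\omega$, one has $|f(W_\omega) \cap B'| = |W_\omega \cap f^{-1}(B')| \leq 1$ by fact (3), and applying fact (3) in $X'_\omega$ yields that $f(W_\omega)$ is an $\omega$-wall of the same kind.

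For (iii), let $C_\omega$ be a Seifert $\omega$-chamber of $X_\omega$. By Proposition~\ref{omegachambers} it is bi-Lipschitz homeomorphic to $T \times \mathbb{R}$ with $T$ a real tree branching at every point, so composing with $f$ produces a bi-Lipschitz embedding $T \times \mathbb{R} \to X'_\omega$. Fact (4) then provides a Seifert $\omega$-chamber $C'_\omega \subseteq X'_\omega$ with $f(C_\omega) \subseteq C'_\omega$. Running the symmetric argument for $f^{-1}$ and $C'_\omega$ gives a Seifert $\omega$-chamber $C''_\omega \subseteq X_\omega$ with $f^{-1}(C'_\omega) \subseteq C''_\omega$, whence $C_\omega \subseteq C''_\omega$. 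Since distinct $\omega$-chambers can meet only in $\omega$-walls and neither can be contained in the other, this forces $C_\omega = C''_\omega$, and therefore $f(C_\omega) = C'_\omega$.

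For (i), I take a flat $F \subseteq X_\omega$; by fact (2), $F$ is contained in a single $\omega$-chamber $C_\omega$. If $C_\omega$ is Seifert, part (iii) supplies a Seifert $\omega$-chamber $C'_\omega$ with $f(F) \subseteq f(C_\omega) = C'_\omega$, so fact (1) forces $f(F)$ to be a flat. If $C_\omega$ is hyperbolic, then, being tree-graded with pieces equal to its $\omega$-walls, $C_\omega$ contains the connected cut-point-free set $F$ inside one such piece, and invariance of domain identifies $F$ with that piece; thus $F$ is itself an $\omega$-wall of $C_\omega$. If the $\omega$-chamber on the other side of $F$ is also hyperbolic, then part (ii) gives that $f(F)$ is an $\omega$-wall, hence a flat by fact (1); if that chamber is instead Seifert, I reduce to the previous Seifert-case argument applied to it. The hardest step, I expect, will be upgrading the containment in (iii) produced by fact (4) to genuine equality, which forces the symmetric $f^{-1}$ argument together with the structural fact that distinct $\omega$-chambers cannot be nested. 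A secondary subtlety in (i) is the two-subcase split for $\omega$-walls shared between a hyperbolic and a Seifert chamber.
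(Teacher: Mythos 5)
Your proof is correct and follows essentially the same route as the paper's: deduce (ii) directly from the metric characterization in fact (3), deduce (iii) from fact (4), and for (i) use fact (2) to confine a flat to an $\omega$-chamber, then split on whether that chamber is Seifert or hyperbolic. The paper's own proof is a single terse paragraph that asserts these three reductions without elaborating; you spell out the two points the paper glosses over. First, in (i), you correctly observe that a flat sitting inside a hyperbolic $\omega$-chamber is an $\omega$-wall by the tree-graded structure plus invariance of domain, and that this wall may still be adjacent to a Seifert $\omega$-chamber on its other side, in which case fact (3) does not apply and one must instead fall back to the Seifert-chamber argument (the paper's phrasing ``adjacent to at least one hyperbolic chamber'' elides this subcase, which your two-way split handles cleanly). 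Second, in (iii), fact (4) only gives containment $f(C_\omega)\subseteq C'_\omega$; the statement of the proposition only asks for ``into,'' so your symmetric $f^{-1}$ argument establishing $f(C_\omega)=C'_\omega$ is more than required, though it is correct and the extra precision (that distinct $\omega$-chambers are never nested, since they can only meet along $\omega$-walls) is used implicitly later in the paper when proving Corollary~\ref{omegawalls3dim}. In short: same approach, with the edge cases made explicit.
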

\begin{proof}
 By fact (2) above, any flat in $X_\omega$ is contained in an $\omega$-chamber. Thus assertion (i) follows from fact (3) for $\omega$-walls
 adjacent to at least one hyperbolic $\omega$-chamber and from fact (4) for flats contained in Seifert $\omega$-chambers. Assertions (ii) and (iii)
 follow from
 facts (3) and (4), respectively.
\end{proof}

We are now ready to prove the following result, which shows that also in the $3$-dimensional case $\omega$-walls and $\omega$-chambers
are preserved by bi-Lipschitz homeomorphisms:

\begin{cor}\label{omegawalls3dim}
 Let $f\colon X_\omega\to X'_\omega$ be a bi-Lipschitz homeomorphism. Then:
 \begin{enumerate}
  \item $f$ maps any $\omega$-chamber of $X_\omega$ into an $\omega$-chamber of the same type;
  \item $f$ maps $\omega$-walls into $\omega$-walls. 
  \end{enumerate}
\end{cor}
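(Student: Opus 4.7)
The plan is to prove part~(1) first, handling Seifert and hyperbolic $\omega$-chambers separately by bootstrapping from Proposition~\ref{primifatti}, and then deduce part~(2) by realising each $\omega$-wall as the intersection of its two adjacent $\omega$-chambers. For Seifert $\omega$-chambers, applying Proposition~\ref{primifatti}(iii) to both $f$ and $f^{-1}$ yields two inclusions: if $D_\omega\subseteq X_\omega$ is Seifert, then $f(D_\omega)\subseteq D'_\omega$ for some Seifert $D'_\omega\subseteq X'_\omega$, and in turn $f^{-1}(D'_\omega)\subseteq D''_\omega$ for some Seifert $D''_\omega\subseteq X_\omega$, so that $D_\omega\subseteq D''_\omega$. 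Since distinct $\omega$-chambers share at most an $\omega$-wall, which is only $2$-dimensional while each $\omega$-chamber contains $3$-dimensional open sets, this containment forces $D_\omega=D''_\omega$, and hence $f(D_\omega)=D'_\omega$.

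For a hyperbolic $\omega$-chamber $C_\omega$, Proposition~\ref{omegachambers} identifies $C_\omega$ with a tree-graded space whose pieces are $2$-flats, namely the $\omega$-walls of $C_\omega$; in particular, through each branching point pass pairs of $\omega$-walls meeting in a single point. By Proposition~\ref{primifatti}(i) combined with fact~(1) of the preceding subsection, $f$ maps such pairs to pairs of $2$-flats in $X'_\omega$ still meeting in exactly one point. However, inside any Seifert $\omega$-chamber $T'\times\mathbb{R}$ every $2$-flat has the product form $l'\times\mathbb{R}$ for a geodesic $l'\subseteq T'$, so two intersecting $2$-flats automatically share an entire $\mathbb{R}$-fiber. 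This rules out $f(C_\omega)$ being contained in any Seifert $\omega$-chamber. To conclude that $f(C_\omega)$ lies in a single (necessarily hyperbolic) $\omega$-chamber, the plan is to invoke Proposition~\ref{utile}, checking that $f(C_\omega)$ is not essentially separated by any $\omega$-wall $W'_\omega\subseteq X'_\omega$: any such separation would transport, via Corollary~\ref{inters:cor}, into a Lipschitz-path obstruction in the path-connected space $C_\omega$, contradicting the observation that no $\omega$-wall of $C_\omega$ essentially separates $C_\omega$ inside $X_\omega$. Applying the same analysis to $f^{-1}$ then upgrades containment to the equality $f(C_\omega)=E'_\omega$.

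For part~(2), let $W_\omega$ be an $\omega$-wall. If $W_\omega$ is not adjacent to any Seifert $\omega$-chamber, Proposition~\ref{primifatti}(ii) immediately yields the conclusion. Otherwise $W_\omega=C^+_\omega\cap C^-_\omega$, where $C^\pm_\omega$ are its two adjacent $\omega$-chambers. By part~(1), $f(C^\pm_\omega)=D^\pm_\omega$ are $\omega$-chambers in $X'_\omega$, distinct by injectivity of $f$; hence $f(W_\omega)\subseteq D^+_\omega\cap D^-_\omega$ is contained in the unique shared $\omega$-wall $W'_\omega$ of these two adjacent chambers. Since $f(W_\omega)$ and $W'_\omega$ are both bi-Lipschitz homeomorphic to $\mathbb{R}^2$ and the former is contained in the latter, invariance of domain forces $f(W_\omega)=W'_\omega$.

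The main obstacle lies in the separation argument within the hyperbolic case of part~(1): the pull-back of an $\omega$-wall of $X'_\omega$ through the bi-Lipschitz map $f$ is a priori only a bi-Lipschitz $2$-flat in $X_\omega$ (by Proposition~\ref{primifatti}(i) and fact~(1)), not itself an $\omega$-wall, so ruling out essential separation of $C_\omega$ demands carefully combining its tree-graded structure with the fiber characterisation of path-obstruction supplied by Corollary~\ref{inters:cor}, very much in the spirit of Kapovich--Leeb's arguments in~\cite{kapleenew}.
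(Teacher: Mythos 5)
Your overall architecture matches the paper's -- reduce to the hyperbolic case, rule out essential separation, invoke Proposition~\ref{utile}, and then deduce (2) by realising walls as intersections of adjacent chambers -- but the hyperbolic case of part~(1) contains a gap which you yourself flag, and the paper closes it by a genuinely different move. You try to show that $f(C_\omega)$ is not essentially separated by any $\omega$-wall $W'_\omega$ of $X'_\omega$ by pulling $W'_\omega$ back and appealing to Corollary~\ref{inters:cor} inside $C_\omega$; but, as you note, $f^{-1}(W'_\omega)$ is a priori only a bi-Lipschitz $2$-flat in $X_\omega$, not an $\omega$-wall, so neither Corollary~\ref{inters:cor} nor the observation that walls of $C_\omega$ do not essentially separate $C_\omega$ applies to it directly. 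The paper's actual argument avoids this: from essential separation one picks two $\omega$-walls $W^\pm_\omega$ of $C_\omega$ whose images $Z^\pm=f(W^\pm_\omega)$ meet the two sides of $W'_\omega$; the $Z^\pm$ and $W'_\omega$ are then pulled back, Proposition~\ref{primifatti}(i) (applied to $f^{-1}$) upgrades $f^{-1}(W'_\omega)$ from bi-Lipschitz flat to genuine flat, and the contradiction is obtained from the statement that two flats contained in the same hyperbolic $\omega$-chamber cannot be essentially separated by any flat in $X_\omega$. In other words, the paper does not need the separating set to be a wall -- only a flat -- and it is the combination of Proposition~\ref{primifatti}(i) with fact~(2) (every flat lies in a single $\omega$-chamber) that makes this manageable. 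This is the step missing from your write-up, and without it the argument does not close.

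Two smaller remarks. First, the statement of the corollary only asserts that $f$ maps chambers \emph{into} chambers and walls \emph{into} walls, so the extra bootstrap you run to upgrade containment to equality (and the accompanying claim that chambers are ``$3$-dimensional'' -- they are not, being non-locally-compact tree-graded products) is not needed here; equality, when wanted, is obtained by applying the corollary to $f^{-1}$. Second, your preliminary argument ruling out $f(C_\omega)$ from lying in a Seifert $\omega$-chamber by counting flat intersections is correct but superfluous: once $f(C_\omega)$ is known to lie in a single $\omega$-chamber $E'_\omega$, Proposition~\ref{primifatti}(iii) applied to $f^{-1}$ immediately forbids $E'_\omega$ from being Seifert, which is how the paper handles it. Likewise the case split in part~(2) according to whether $W_\omega$ is adjacent to a Seifert chamber is unnecessary once part~(1) is in place.
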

\begin{proof}
Since every $\omega$-wall may be expressed as the intersection of adjacent $\omega$-chambers,
(2) readily follows from (1). In order to show (1), since we already know that Seifert $\omega$-chambers are sent to Seifert $\omega$-chambers (see Proposition~\ref{primifatti}--(iii)), it is sufficient
to prove that any hyperbolic $\omega$-chamber $C_\omega\subseteq X_\omega$ is sent to an $\omega$-chamber of $X'_\omega$ (that cannot be Seifert by Proposition~\ref{primifatti}--(iii) applied to $f^{-1}$). 
Suppose by contradiction that $f(C_\omega)$ is essentially separated by the $\omega$-wall $W'_\omega\subseteq X'_\omega$. We know that $C_\omega$ is the union of its $\omega$-walls, that every $\omega$-wall lies in an
$\omega$-chamber, and that $f$ takes flats to flats (see Proposition~\ref{primifatti}--(i)), so there exist flats $Z^+,Z^-\in f(C_\omega)$ such that $Z^\pm \cap S^\pm (W'_\omega)\neq\emptyset$. Now the bi-Lipschitz flats
$f^{-1}(Z^\pm)$ lie in the hyperbolic $\omega$-chamber $C_\omega$, so they are flats, and are essentially separated by the flat $f^{-1}(W'_\omega)\subseteq X_\omega$. This provides the desired contradiction, because
two flats in the same hyperbolic $\omega$-chamber cannot be essentially separated by any other flat.
\end{proof}

\section{Proof of Theorem~\ref{main1:thm}}\label{main1:sec}
Let now $M,M'$ both be either closed irreducible non-geometric $3$-manifolds, or  irreducible $n$-dimensional HDG manifolds, $n\geq 3$. 
We denote by $X,X'$ the Riemannian universal coverings of $M,M'$, respectively. In the case when $\dim M=\dim M'=3$, we can (and we do) suppose that $X,X'$ are endowed with
a non-positively curved equivariant metric.  We also fix a quasi-isometry $f\colon X\to X'$. We have already observed that 
Theorems~\ref{kaplee} and~\ref{qi-preserve:thm} reduce to Theorem~\ref{main1:thm}, which asserts that:
\begin{enumerate}
 \item 
for every chamber $C$ of $X$ there exists a chamber $C'$ of $X'$ such that
  $f(C)$ lies within finite Hausdorff distance from $C'$;
  \item
  $f$ preserves the structures of $X,X'$ as tree of spaces.
\end{enumerate} 

Our study of bi-Lipschitz homeomorphisms between the asymptotic cones $X_\omega$ and $X'_\omega$ was mainly aimed at showing that
such homeomorphisms must preserve $\omega$-walls (see Corollaries~\ref{omegawalls:preserved} and~\ref{omegawalls3dim}). We first show how this fact may be used
to show that $f$ must coarsely preserve walls. The proof of the following result illustrates a very general strategy to get back from homeomorphisms between
asymptotic cones to quasi-isometries between the original spaces.

\begin{thm}\label{wall:thm}
There exists $\beta\geq 0$ 
such that, if $W$ is a wall of $X$, then $f(W)$  lies at Hausdorff distance bounded by $\beta$ from a wall $W'\subseteq X'$.
Moreover, $f$ stays at distance bounded by $\beta$ from a quasi-isometry between $W$ and $W'$.
\end{thm}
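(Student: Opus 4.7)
\noindent\emph{Proof plan.} My plan is to argue by contradiction using the asymptotic cone technology developed above, upgrading the fact that bi-Lipschitz homeomorphisms of asymptotic cones preserve $\omega$-walls (Corollaries~\ref{omegawalls:preserved} and~\ref{omegawalls3dim}) into a uniform bound on $f$ itself. The overall scheme is classical: failure of a uniform bound gives a sequence of scales $r_n\to\infty$ at which the relevant badness is visible, and rescaling by $r_n$ in the asymptotic cone turns that badness into a topological obstruction that contradicts the preservation of $\omega$-walls by $f_\omega$.

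First I would establish the existence of $\beta_1\geq 0$ such that, for every wall $W\subseteq X$, some wall $W'\subseteq X'$ satisfies $f(W)\subseteq N_{\beta_1}(W')$ and $W'\subseteq N_{\beta_1}(f(W))$. Assume toward a contradiction that no such $\beta_1$ exists. Then I can extract walls $W_n\subseteq X$ and scales $r_n\to\infty$ together with witnesses of the failure of uniform closeness at scale $r_n$. Two types of witnesses must be handled: either (a) a point $x_n\in W_n$ with $d(f(x_n),W')\geq r_n$ for every wall $W'$ of $X'$, or (b) walls $W'_n\subseteq X'$ which are closest to $f(W_n)$ near a chosen basepoint but admit a point $y_n\in W'_n$ at distance $\geq r_n$ from $f(W_n)$. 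In either case I choose basepoints appropriately on $W_n$ (and on $X'$, namely at $f(x_n)$ or at $y_n$ pulled back via a quasi-inverse of $f$) and form the asymptotic cones $X_\omega$ and $X'_\omega$ with common rescalings $(r_n)$.

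Then $f$ induces a bi-Lipschitz homeomorphism $f_\omega\colon X_\omega\to X'_\omega$, and by construction $W_\omega:=\omega\text{-}\lim W_n$ is an $\omega$-wall of $X_\omega$ passing through the basepoint. By Corollary~\ref{omegawalls:preserved} (respectively Corollary~\ref{omegawalls3dim} when $\dim M=3$), $f_\omega(W_\omega)$ is an $\omega$-wall of $X'_\omega$, necessarily equal to $\omega\text{-}\lim W''_n$ for some sequence of walls $W''_n\subseteq X'$. Unwinding the definition of the $\omega$-limit, $\omega$-a.e.~point of $f(W_n)$ lying within bounded distance of the basepoint is within $o(r_n)$ of $W''_n$, and conversely every point of $W''_n$ near the basepoint is within $o(r_n)$ of $f(W_n)$. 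This contradicts the choice of $r_n$: in case (a) the basepoint $[(f(x_n))]$ belongs to the $\omega$-wall $f_\omega(W_\omega)$, so $d(f(x_n),W''_n)=o(r_n)$, contradicting $d(f(x_n),W'')\geq r_n$ for every wall $W''$; in case (b) one uses $\omega$-surjectivity of $f_\omega$ restricted to $W_\omega$ onto $f_\omega(W_\omega)$ to produce $x'_n\in W_n$ with $d(f(x'_n),y_n)=o(r_n)$, contradicting $d(y_n,f(W_n))\geq r_n$.

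Once both Hausdorff inclusions are in hand, the second statement follows with little additional work: I would define $g\colon W\to W'$ by sending $x\in W$ to any point $g(x)\in W'$ with $d(f(x),g(x))\leq\beta_1+1$. Since $f$ is a $(k,c)$-quasi-isometry and walls are quasi-isometrically embedded in the respective universal covers, $g$ is a quasi-isometric embedding with controlled constants, while the inclusion $W'\subseteq N_{\beta_1}(f(W))$ makes $g(W)$ coarsely dense in $W'$; hence $g$ is a quasi-isometry, and $\beta$ can be taken depending only on $\beta_1$ and the quasi-isometry constants. The hard part is making the contradiction in the previous paragraph fully rigorous, especially in case (b): unbounded Hausdorff distance is a global statement, whereas the asymptotic cone only sees bounded neighbourhoods at scale $r_n$, so one must select the witness $y_n$ at distance comparable to $r_n$ from a cleverly chosen basepoint on $W_n$ (using the quasi-inverse of $f$) in order for it to represent a non-trivial point of $f_\omega(W_\omega)$ in the cone.
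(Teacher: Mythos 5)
Your overall framework is the right one (bring in asymptotic cones, use the preservation of $\omega$-walls, argue by contradiction, and reduce the Hausdorff-distance statement to a single inclusion $f(W)\subseteq N_\beta(W')$ via the fact that a quasi-isometric embedding between two spaces quasi-isometric to $\mathbb{R}^{n-1}$ is automatically a quasi-isometry). But the core of your contradiction argument is flawed, and the flaw is precisely at the point you flag at the end as ``the hard part''.

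First, your dichotomy (a)/(b) does not correctly encode the negation of the hypothesis. The negation only gives: for each $n$ there is a wall $W_n$ so that for \emph{every} wall $W'$ of $X'$ there is \emph{some} point $q\in W_n$ (depending on $W'$) with $d(f(q),W')\geq n$. Your case (a), which posits a single $x_n$ with $d(f(x_n),W')\geq r_n$ for \emph{every} $W'$, is strictly stronger and in fact impossible once $r_n$ exceeds the coarse-density constant of walls in $X'$, so that case is vacuous; and your case (b) only controls the inclusion $W'\subseteq N(f(W))$, which as you note already follows for free from the first inclusion, so it cannot be the source of the contradiction. The real difficulty is this: the cone $X_\omega((p),(m))$ with a fixed basepoint $p\in W$ picks out a candidate wall $W'_m$ (with $f_\omega(W_\omega)=\omega\text{-}\lim W'_m$), and then there is a point $p_m\in W$ with $d(f(p_m),W'_m)\geq m$; but one shows $\omega\text{-}\lim r_m/m=\infty$ where $r_m=d(p_m,p)$, so $p_m$ is \emph{invisible} in this first cone. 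Your proposed resolution --- re-centering at $p_m$ and invoking ``$\omega$-surjectivity'' to get $d(f(x'_n),y_n)=o(r_n)$ --- does not go through, because after re-centering the image $\overline f(\overline W_\omega)$ is a \emph{different} $\omega$-wall $W''_\omega\neq\overline W'_\omega$, and there is no reason for the witness $[(y_n)]$ (or $[(f(p_m))]$) to lie on $\overline W'_\omega$.

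What the paper actually does after re-centering at $(p_m)$, still with rescaling factors $(m)$, is a genuinely different and essential step. The minimal choice of $p_m$ gives the crucial bound $d(f(q),W'_m)\leq m+k+c$ for all $q\in W$ with $d(p,q)\leq r_m$ (inequality~(\ref{cond1:eqn})), so the half-space $A_\omega=\omega\text{-}\lim\{q\in W:d(q,p)\leq r_m\}\subseteq\overline W_\omega$ (a half-space because $r_m/m\to\infty$) is mapped by $\overline f$ into the $1$-neighbourhood of $\overline W'_\omega$, while simultaneously lying inside the distinct $\omega$-wall $W''_\omega=\overline f(\overline W_\omega)$. Lemma~\ref{inters:lem} then forces $\overline f(A_\omega)$ into the $1$-neighbourhood of an $\omega$-fiber $F_\omega\subseteq\overline W'_\omega\cap W''_\omega$, and composing with the nearest-point projection produces a quasi-isometric embedding of a half-space in $\mathbb{R}^{n-1}$ into $\mathbb{R}^l$ with $l\leq n-3$, which is impossible. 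Neither the second cone with a carefully controlled half-space, nor the use of Lemma~\ref{inters:lem} on the intersection of two $\omega$-walls being a fiber, nor the final dimension obstruction, appears in your proposal; they are exactly what is needed to fill the gap you acknowledge but leave open.
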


\begin{proof}
It is well-known that a quasi-isometric embedding between spaces both quasi-isometric to $\mathbb{R}^{n-1}$ is itself a quasi-isometry
(for example, because otherwise by taking asymptotic cones with suitably chosen sequences of basepoints and of rescaling factors,
one could construct a non-surjective bi-Lipschitz embedding of $\mathbb{R}^{n-1}$ into itself, which cannot exist). 
Therefore, it is sufficient to show that there exists a wall $W'\subseteq X'$ such that $f(W)\subseteq N_\beta(W')$, where
$N_\beta$ denotes the $\beta$-neighbourhood in $X'$.

Suppose by contradiction that 
for each $m\in\mathbb{N}$ and wall $W'\subseteq X'$ we have $f (W)\nsubseteq N_m (W')$.
Fix a point $p\in W$. The quasi-isometry $f$ induces a bi-Lipschitz homeomorphism
$f_\omega$ from the asymptotic cone $X_\omega =X_\omega ((p),(m))$ 
to the asymptotic cone $X'_\omega (f (p),(m))$. 
By Corollaries~\ref{omegawalls:preserved} and~\ref{omegawalls3dim},
if $W_\omega$ is the $\omega$-limit of the constant sequence of subsets all equal to $W$, then
there is an $\omega-$wall $W'_\omega=\omega$-$\lim W'_m$ such that 
$f_\omega(W_\omega)=W'_\omega$. 
By hypothesis, for each $m$ there is a point $p_m\in W$ with $d(f (p_m),W'_m)\geq m$.
Set $r_m = d(p_m,p)$. By choosing $p_m$ as close to $p$ as possible, 
we may assume that no point $q$ such that $d(p,q)\leq r_m-1$ satisfies $d(f(q),W'_m)\geq m$, 
so 
\begin{equation}\label{cond1:eqn}
d(f (q),W'_m)\leq m +k+c\qquad  {\rm for\ every}\ q\in W\ {\rm s.t.}\ d(p,q)\leq r_m.
\end{equation} 
Notice that $\omega$-$\lim r_m/m=\infty$,
for otherwise $[(p_m)]$ should belong to $W_\omega$,
$[f (p_m)]$ should belong to $X'_\omega ((f(p)), (m))$, and, since
$f_\omega(W_\omega)=W'_\omega$, we would
have $d(f(p_m),W'_m)=o(m)$. 

Let us now change basepoints, and consider instead the pair of asymptotic cones
$\overline{W}_\omega=W_\omega( (p_m), (m))$ and $X'_\omega ((f(p_m)), (m) )$.
The quasi-isometry $f$ induces a bi-Lipschitz embedding $\overline f$ between these asymptotic cones
(note that $f\neq \overline{f}$, simply because due to the change of basepoints, 
$f$ and $\overline{f}$ are defined on different spaces
with values in different spaces!).
Let $A_m=\{q\in W\, |\, d(q,p)\leq r_m\}$ and $A_\omega=
\omega$-$\lim A_m\subseteq \overline{W}_\omega$.
Since $\omega$-$\lim r_m/m=\infty$, it is easy to see that $A_\omega$ is bi-Lipschitz
homeomorphic to a half-space in $\overline{W}_\omega$. Moreover, 
by (\ref{cond1:eqn}) each point in $\overline{f}(A_\omega)$ is at a distance at most 1 from 
$\overline{W}'_\omega=\omega$-$\lim W'_i\subseteq X'_\omega ((f(p_m)), (m))$ (as before, observe that the sets $\overline{W}'_\omega$ and $W'_\omega$
live in different spaces). 
Again by Corollaries~\ref{omegawalls:preserved} and~\ref{omegawalls3dim},
we have that $\overline{f}(A_\omega)\subseteq \overline{f}(\overline{W}_\omega )= W''_\omega$ 
for some $\omega-$wall $W''_\omega$. Moreover, since $[(f (p_m))]\in W''_\omega\setminus
\overline{W}'_\omega$, we have $\overline{W}'_\omega \neq W''_\omega$. 

By Lemma~\ref{inters:lem} there exists a fiber $F_\omega\subseteq \overline{W}'_\omega\cap W''_\omega$ such that
every path joining a point in $W''_\omega$
with a point in $\overline{W}'_\omega$ has to pass through $F_\omega$. Now, if $a\in \overline{f}(A_\omega)$  
we have $d(a, \overline{W}'_\omega)\leq 1$, so there exists a geodesic of length at most one 
joining $a\in W''_\omega$ with some point in $\overline{W}'_\omega$. Such a geodesic must pass
through $F_\omega$, so 
every point of $\overline{f}(A_\omega)$ must be at a distance at most 1 from $F_\omega$.
If $h\colon \overline{f}(A_\omega)\to F_\omega$ is such that $d(b,h(b))\leq 1$ for every $b\in \overline{f}(A_\omega)$,
then $h$ is a $(1,2)$-quasi-isometric embedding. Therefore the map
$g=h\circ \overline{f}\colon A_\omega\to F_\omega$ is a quasi-isometric embedding. But this is not possible,
since if $n-1>l$ there are 
no quasi-isometric embeddings from a half space in $\mathbb{R}^{n-1}$ to $\mathbb{R}^l$ (as, taking asymptotic cones, 
such an embedding would provide an injective continuous function 
from an open set in $\mathbb{R}^{n-1}$ to $\mathbb{R}^l$). This completes the proof of the theorem.
\end{proof}

If $W\subseteq X$ is a wall, then we denote by $f_\#(W)$ the wall of $X'$ which lies at finite Hausdorff distance
from $f(W)$ (this wall exists by the previous theorem, and it is obviously unique because distinct walls in $X'$ are at infinite
Hausdorff distance from each other).
It is not difficult to show that, if $W_1,W_2,W_3$ are walls of $X$ such that
$W_3$ does not separate $W_1$ from $W_2$, then we can connect $W_1$ and $W_2$ by a curve supported outside $N_r(W_3)$, where $r>0$ is any chosen constant. 
If $r$ is sufficiently large, then the image of such a curve via $f$ may be replaced by a continuous path that connects
$f_\#(W_1)$ and $f_\#(W_2)$ without intersecting $f_\# (W_3)$. This readily implies that $f$ maps walls that are adjacent to the same chamber
(close to) walls that are adjacent to the same chamber. Together with the fact that the walls of a chamber are $r$-dense in the chamber itself for some $r>0$, 
this implies in turn that $f$ must coarsely preserve chambers. Moreover, $f$ must also preserve the structures of $X$ and of $X'$ as trees of spaces. This concludes the proof
of Theorem~\ref{main1:thm}, whence of Theorems~\ref{kaplee} and~\ref{qi-preserve:thm}.

\section{Quasi-isometric rigidity}\label{QIrigidity:sec}
Theorem 4.1 allows us to understand quite well groups that are quasi-isometric to fundamental groups of irreducible non-geometric $3$-manifolds or of irreducible HDG
manifolds. We begin by studying groups quasi-isometric to fundamental groups of the pieces. As usual, the case when Seifert pieces are allowed requires more care, so we first focus on the higher dimensional case. 

\subsection{QI-rigidity of higher dimensional pieces}\label{subseccita}
Let $N$ be a complete finite-volume hyperbolic $m$-manifold, $m\geq 3$, and
let $\Gamma$ be a finitely generated group quasi-isometric to
$\pi_1 (N)\times\mathbb{Z}^d$, $d\geq 0$. Without loss of generality, we may assume that the cusps of $N$ are toric.

By definition, the group $\G$ is quasi-isometric to $\pi_1(M)$, where
$M=\overline{N}\times T^d$ is an (obviously irreducible) HDG manifold of dimension $n=m+d$ consisting of a single piece.
The universal covering $X$ of $M$ is isometric to the Riemannian
product $B\times \mathbb{R}^d$, where $B$ is a neutered space. The walls of $X$ coincide
with the boundary components of $X$.
As discussed  
in Section~\ref{definitions:sec}, a quasi-isometry between $\Gamma$ and $\pi_1 (M)$ induces a geometric 
quasi-action of $\Gamma$ on $X$ that will be fixed from now on. As usual, we will identify every element $\gamma\in\G$ with the corresponding quasi-isometry
of $X$ defined by the quasi-action.

We want to prove that every quasi-isometry $\gamma\colon X\to X$, $\gamma\in \Gamma$ 
can be coarsely projected on $B$ to obtain a quasi-isometry of $B$. 
Recall first that Theorem~\ref{wall:thm} implies that every wall of $X$ is taken by $\gamma$ close to another wall. Since every fiber of $X$
may be expressed as the coarse intersection of two walls, this readily implies that $\gamma$ must coarsely preserve also fibers.
This fact can be exploited  to define a quasi-action of $\Gamma$ on $B$ as follows: 
for every $\gamma\in \Gamma$,
we define a map $\psi (\gamma)\colon B\to B$ by setting 
$\psi(\gamma)(b)=\pi_B (\gamma( (b,0) ))$ for every $b\in B$, where $\pi_B\colon X\cong B\times \mathbb{R}^d \to B$
is the natural projection. It is not difficult that $\psi$ is indeed a cobounded quasi-action.
On the contrary, the fact that the quasi-action $\psi$ is proper is  a bit more delicate.

We first observe that, from the way the action of $\Gamma$ on
$B$ was defined, every $\gamma\in\Gamma$ coarsely permutes the components of $\partial B$.
Recall that $m=n-d$ is the dimension of the neutered space $B$, and let $G$
be the isometry group of $(B,d_B)$. Every element of $G$ is the restriction to $B$ of an isometry
of the whole hyperbolic space $\mathbb{R}^m$ containing $B$.
We will denote by ${\rm Comm} (G)$ the \emph{commensurator} of 
$G$ in ${\rm Isom} (\mathbb{R}^m)$, i.e.~the group
of those elements $h\in {\rm Isom} (\mathbb{R}^m)$ such that
the intersection $G\cap (h G h^{-1})$
has finite index both in $G$ and in $h G h^{-1}$.

The following rigidity result is an important step in Schwartz's proof of QI-rigidity of non-uniform lattices in $G$:

\begin{prop}[Lemma 6.1 in \cite{Schwartz}]\label{schwartz1}
For every $\gamma\in\Gamma$ there exists a unique isometry
$\theta (\gamma)\in {\rm Isom}(\mathbb{R}^m)$ 
whose restriction to $B$ stays at finite distance from $\psi(\gamma)$.
Moreover, for every $\gamma\in\Gamma$ the isometry
$\theta (\gamma)$ belongs to ${\rm Comm} (G)$, and 
the resulting map $\theta\colon \Gamma\to {\rm Comm}(G)$ is
a group homomorphism.
\end{prop}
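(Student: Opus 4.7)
The plan is to follow Schwartz's extension-and-rigidity strategy, whose two key inputs are (a) that each $\psi(\gamma)$ coarsely permutes the boundary horospheres of the neutered space $B$, a fact established in the paragraph preceding the proposition via Theorem~\ref{wall:thm} and the coarse preservation of chambers, and (b) quasi-conformal rigidity of quasi-isometries of $\mathbb{H}^m$ in dimension $m\geq 3$.

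First I would extend $\psi(\gamma)$ to a quasi-isometry $\widehat{\psi(\gamma)}\colon \mathbb{H}^m \to \mathbb{H}^m$. Since each boundary horosphere of $B$ is sent within bounded Hausdorff distance to another boundary horosphere, one can fill in the definition horoball by horoball: for every horoball $H$ complementary to $B$, choose the horoball $H'$ whose bounding horosphere lies near $\psi(\gamma)(\partial H)$, and extend to $H$ using any quasi-isometry $H\to H'$ that matches $\psi(\gamma)$ on $\partial H$ up to bounded error. A standard patching argument then produces a global quasi-isometry of $\mathbb{H}^m$ whose restriction to $B$ stays at bounded distance from $\psi(\gamma)$.

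Second I would pass to the sphere at infinity. Every quasi-isometry of $\mathbb{H}^m$ induces a quasi-conformal self-homeomorphism of $S^{m-1}=\partial \mathbb{H}^m$, and the set $P\subset S^{m-1}$ of parabolic fixed points of $G$ is precisely the set of centers of the complementary horoballs, so it is permuted by $\partial \widehat{\psi(\gamma)}$. Since $G$ has finite covolume, $P$ is dense in $S^{m-1}$. The horoball permutation from the previous step can then be leveraged to show that $\partial\widehat{\psi(\gamma)}$ quasi-conformally conjugates a finite-index subgroup of $G$ to another such subgroup. By Tukia's rigidity theorem, in dimension $m-1\geq 2$ such a quasi-conformal conjugation is in fact a M\"obius transformation, which integrates to a unique isometry $\theta(\gamma)\in\mathrm{Isom}(\mathbb{H}^m)$. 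A Morse-lemma argument comparing two quasi-isometries of $\mathbb{H}^m$ with the same boundary values then shows that $\theta(\gamma)$ is at bounded distance from $\widehat{\psi(\gamma)}$, and in particular $\theta(\gamma)|_B$ is at bounded distance from $\psi(\gamma)$.

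Uniqueness of $\theta(\gamma)$ is automatic: two distinct isometries of $\mathbb{H}^m$ induce distinct boundary maps and so cannot both remain at bounded distance from $\psi(\gamma)$ on the unbounded set $B$. The commensurator assertion follows because $\theta(\gamma)$ carries the $G$-equivariant horoball packing of $\mathbb{H}^m\setminus B$ to another such packing, so conjugation by $\theta(\gamma)$ sends the finite-index setwise stabilizer of the packing inside $G$ to another finite-index subgroup of $G$, giving $\theta(\gamma)\in\mathrm{Comm}(G)$. The homomorphism property is then a direct consequence of uniqueness: both $\theta(\gamma_1\gamma_2)$ and $\theta(\gamma_1)\theta(\gamma_2)$ are isometries at bounded distance from $\psi(\gamma_1)\circ\psi(\gamma_2)$ on $B$ (using the quasi-action relation), so they must coincide. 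I expect the main obstacle to be the rigidity step: passing from the bare set-theoretic permutation of parabolic fixed points to a genuine quasi-conformal conjugation of a lattice, which is precisely what Tukia's theorem requires in order to conclude conformality.
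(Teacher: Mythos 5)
The paper does not actually prove this statement: it is stated as \emph{Lemma 6.1 in \cite{Schwartz}}, with no argument supplied, so there is no ``paper's proof'' to compare against. Your outline does follow Schwartz's overall strategy (extend to $\mathbb{H}^m$, pass to the boundary sphere, invoke a rigidity result to replace the quasi-isometry by an isometry, deduce commensuration, and get the homomorphism property from uniqueness), and the horoball-by-horoball extension, the uniqueness argument, and the observation that the homomorphism property follows from the quasi-action axioms plus uniqueness are all correct as sketched.

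The genuine gap is in the rigidity step, and it is more serious than you suggest when you call it ``the main obstacle.'' Tukia's theorem (and Sullivan's), as cited in this paper, concerns uniformly quasi-conformal \emph{groups}: a uniformly quasi-conformal group on $S^{n}$, $n\geq 2$, is quasi-conformally \emph{conjugate} to a conformal group. It does not say that an individual quasi-conformal map conjugating one conformal group into another is itself M\"obius, and indeed if $\Phi$ conjugates the quasi-conformal action to a conformal one, each $\Phi\circ\partial\widehat{\psi(\gamma)}\circ\Phi^{-1}$ is M\"obius but $\partial\widehat{\psi(\gamma)}$ need not be. In addition, you never establish the premise you want to feed to the theorem: a single quasi-isometry that coarsely permutes the complementary horoballs of $B$ does not, on its own, conjugate a finite-index subgroup of $G$ into $G$; extracting any group conjugation at all from the bare horoball permutation is precisely what has to be proved. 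Schwartz's actual argument avoids this detour entirely: rather than manufacturing a group conjugation and quoting a theorem about quasi-conformal groups, he proves a \emph{pattern rigidity} result directly -- any quasi-isometry of $\mathbb{H}^m$ whose boundary map coarsely respects the $G$-equivariant pattern of horoball shadows must, by a ``zooming-in'' (scattering/inversion) argument concentrated at the dense set of parabolic points, have conformal boundary map, hence be at bounded distance from an isometry. Your proof as written therefore quotes a theorem that does not give what you need and leaves the real work unaddressed. Separately, your commensurator argument is also too quick: that $\theta(\gamma)$ sends one equivariant horoball packing close to another does not by itself show that $\theta(\gamma)G\theta(\gamma)^{-1}\cap G$ has finite index in both; as the paper's discussion immediately after the proposition indicates, one uses the whole image $\Lambda=\theta(\Gamma)$, a height-adjustment of the horospheres to produce a $\Lambda$-invariant neutered space, discreteness, and finite covolume to conclude that $\Lambda$ is a lattice commensurable with $G$.
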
 

In order to conclude our study of $\G$ we now need to understand the structure of the kernel and of the image of $\theta$.
We set $\Lambda=\theta(\G)< {\rm Isom}(\mathbb{R}^m)$ the image of the
homomorphism $\theta$, and we show that $\Lambda$ is 
commensurable with $\pi_1 (N)$. 
It is a result of Margulis that a non-uniform lattice in ${\rm Isom} (\mathbb{R}^m)$ is arithmetic
if and only if it has infinite index in its commensurator (see~\cite{zim}). As a result,
things would be quite a bit easier if $N$ were assumed to be non-arithmetic. 
To deal with the general case, one needs to observe the following facts:
\begin{enumerate}
\item Since elements of $\Lambda$ are uniformly close to quasi-isometries of $B$, each of them must send every horosphere
in $\partial B$ into a horosphere $O'$ which is parallel and uniformly close to a horosphere $O''\subseteq B$;
\item using (1), one can slightly change the ``heights'' of the horospheres in $\partial B$ in order to
define a new neutered space $\widehat{B}$ which is left invariant by the action of $\Lambda$;
\item since the isometry group of $\widehat{B}$ is discrete, one then gets that $\Lambda$ is discrete; being cobounded
on $\widehat{B}$, the action of $\Lambda$ on $\mathbb{H}^m$ has a finite covolume, so $\Lambda$ is a non-uniform lattice;
\item being quasi-isometric to $\widehat{B}$ and $B$ respectively, the groups $\Lambda$ and $\pi_1(N)$ are quasi-isometric to each other,
so one may use again Schwartz's results to conclude that $\Lambda$ is commensurable with $\pi_1(N)$.
\end{enumerate}

The study of $\ker \theta$ is easier. In fact, it is not difficult to show that the quasi-action of $\ker\theta$ on $X$
may be slightly perturbed to define a geometric  quasi-action of $\ker\theta$ on one fiber of $X$. As a consequence, $\ker\theta$ is finitely generated,
quasi-isometric to $\mathbb{Z}^d$ and quasi-isometrically embedded in $\G$. Since groups quasi-isometric to $\mathbb{Z}^d$ are virtually isomorphic to $\mathbb{Z}^d$,
we have thus shown that $\G$ is isomorphic to the extension of a non-uniform lattice commensurable with $\pi_1(N)$ by a group virtually isomorphic to $\mathbb{Z}^d$.
Since abelian undistorted normal subgroups are always virtually central (see~\cite[Proposition 9.10]{FLS}), this concludes the proof of Theorem~\ref{product:thm}.


\subsection{QI-rigidity of $3$-dimensional pieces}
Something more can be said in the $3$-dimensional case. Namely, if $\G$ is quasi-isometric to the fundamental
group of a hyperbolic piece, then Schwartz's results imply that $\G$ is a finite extension of a non-uniform lattice
in ${\rm Isom}(\mathbb{H}^3)$, i.e.~$\G$ fits into a short exact sequence
$$
\xymatrix{
1 \ar[r] &F\ar[r]& \G\ar[r] &\G'\ar[r]& 1\ ,
}
$$
where $F$ is finite and $\G'$ is the fundamental group of a finite-volume hyperbolic $3$-orbifold with flat cusps. Moreover, 
(quasi-)stabilizers in $\G$ of boundary flats of $X$ are sent
to peripheral subgroups of $\G'$, and the subgroup
$F$ can be characterized
as the maximal finite normal subgroup of $\G'$, and is also the unique maximal finite normal subgroup
of every (quasi-)stabilizer of boundary flats of $X$.

In the Seifert case we have $X=B\times\mathbb{R}$, where $B$ may be chosen to be the complement in $\mathbb{H}^2$ of an equivariant
collection of disjoint open half-planes (horoballs are replaced by half-planes because the base surface is now geometrized as a
surface with geodesic boundary rather than with cusps). Schwart's results are no longer available in dimension 2, but using the fact that all the (quasi-)isometries
involved (quasi)-preserve the boundary of $B$ it is possible to slighlty modify the strategy described above to show that $\G$
fits into a short exact sequence
$$
\xymatrix{
1\ar[r] & K\ar[r] & \G \ar[r] & \G'\ar[r] & 1\ ,
}
$$
where $K$ has a unique maximal finite normal subgroup $F$, the group
$K/F$ is isomorphic either to $\mathbb{Z}$ or to the infinite dihedral group, and $\G'$ is the fundamental
group of a compact hyperbolic $2$-orbifold with geodesic boundary. Again the {(quasi-)stabilizer} of any boundary component
of $X$ is sent to a peripheral subgroup of $\pi_1(O)$, and $F$ is also the unique maximal normal subgroup of the (quasi-)stabilizers
of the boundary components of $X$.

\subsection{Quasi-isometric rigidity: the final step}
Let now $M$ be either an irreducible non-geometric $3$-manifold or an irreducible HDG manifold with universal covering $X$,
and take a group $\G$ quasi-isometric to $\pi_1(M)$. By Proposition~\ref{quasiact:prop} we have a geometric quasi-action of $\G$ on $X$.
By Theorem~\ref{main1:thm}, this quasi-action induces an action by automorphisms on the simplicial tree $T$ which encodes the structure of $X$
as a tree of spaces. Now a fundamental result from Bass-Serre theory says that any group acting on a simplicial tree without inversions is isomorphic
to the fundamental group of a graph of groups whose vertex groups coincide with (conjugates of) the stabilizers of vertices, and edge groups
coincide with (conjugates of) the stabilzers of edges (recall that $G$ acts on  $T$ \emph{without inversions} if no
element of $G$ switches the endpoints of an edge of $T$).

Now the action of $\Gamma$ on $T$  might include
some inversions, but it is easy to construct a subgroup
$\G^0$ of $\G$ of index at most two that acts on $T$ without inversions.
Moreover, it readily follows from the construction that vertex groups of $\G^0$ are quasi-isometric to stabilizers of chambers, while edge groups
are quasi-isometric to stabilizers of walls. This already concludes the proof of quasi-isometric rigidity in the higher dimensional case
(i.e.~Theorem~\ref{qirigidity:thm}).

In the $3$-dimensional case a stronger result holds, thanks to the extra information we described above. In fact,
after replacing $\G$ with $\G^0$ (which is obviously virtually isomorphic to $\G$), we can make use of the fact
that the unique maximal finite normal subgroups of all vertex and edge stabilizers coincide, and therefore coincide with
the kernel $F$ of the action of $\G$ on $T$. The vertex stabilizers for the action
of $\G'=\G/F$ on $T$ are fundamental groups of hyperbolic or Seifert $3$-orbifolds with boundary. One can then glue these orbifolds together
(according to the combinatorics described by the graph $T/\G$) to get a $3$-dimensional orbifold with fundamental group $\G'$.
This $3$-orbifold is finitely covered by a manifold~\cite{MM}, and this concludes the proof of Theorem~\ref{KL2}. 

\section{Open questions}\label{open}
As already stated in the introduction, 
in order to conclude the
  classification of $3$-manifold groups up to quasi-isometry only the case of non-geometric irreducible manifolds with at least one arithmetic hyperbolic piece
 still has to be understood (see~\cite{BN2}). Therefore, in this section we pose some questions about the quasi-isometric rigidity of higher dimensional piecewise geometric manifolds.
 Many of the following problems are taken from~\cite[Chapter 12]{FLS}.

 The following question in addressed in~\cite{FK}:
 
 \begin{problem}
 In~\cite{Nguyen1,Nguyen2}
 Nguyen Phan  defined the class of \emph{cusp decomposable manifolds}, each element of which decomposes into cusped locally symmetric pieces. Is is true than any quasi-isometry
  between the fundamental groups of two cusp decomposable manifolds induces a quasi-isometry between the fundamental groups of their pieces? To what extent does quasi-isometric rigidity
  hold for fundamental groups of cusp decomposable manifolds?
 \end{problem}

 \begin{problem}
 In~\cite{LS}, Leeb and Scott
  defined a canonical decomposition for non-positively curved closed Riemannian manifolds, which provides a generalization to higher dimensions
  of the JSJ decomposition of irreducible $3$-manifolds. Is it true than every quasi-isometry between the fundamental groups of two closed non-positively curved Riemannian manifolds induces a 
  quasi-isometry between the fundamental groups of their pieces? 
 \end{problem}

 Specializing
to the class of HDG manifold groups, Theorem \ref{qirigidity:thm} describes a necessary condition for deciding
whether the fundamental groups of two irrfeducible HDG manifolds $M_1$ and $M_2$ are quasi-isometric to each other: the two HDG manifolds $M_i$ must essentially be built up from the same collection of 
pieces (up to commensurability), with the same patterns of gluings (see~\cite[Theorem 10.7]{FLS} for a precise statement). The only
distinguishing feature between $M_1$ and $M_2$ would then be in the actual gluing maps
used to attach pieces together. We are thus lead to the following questions:

\begin{problem}\label{glueing-QI}
Take pieces $V_1$ and $V_2$ each having exactly one 
boundary component, and let $M_1,M_2$ be a pair of irreducible HDG manifolds obtained 
by gluing $V_1$ with $V_2$. Must the fundamental groups of $M_1$ and $M_2$ be 
quasi-isometric? 
\end{problem}

\begin{problem}\label{CATpro}
Is there a pair of irreducible HDG manifolds with quasi-isometric fundamental groups, with the
property that one of them supports a locally CAT(0) metric, but the other
one cannot support any locally CAT(0) metric? 
\end{problem}

\begin{problem}\label{glueing-NP}
 Is it true that the fundamental group of every irreducible HDG manifold is quasi-isometric to the fundamental group
 of a non-positively curved HDG manifold? 
\end{problem}

\begin{problem}\label{glueing-AB}
 Is it true that the fundamental group of every irreducible HDG manifold is semihyperbolic in the sense of Alonso and Bridson~\cite{alo}?
\end{problem}

A positive answer to Problem~\ref{glueing-QI} would imply positive answers both to Problem~\ref{CATpro} and to Problem~\ref{glueing-NP}, 
and a positive answer to Problem~\ref{glueing-NP}
would imply in turn a positive answer
to Problem~\ref{glueing-AB}.

Concerning Problem~\ref{CATpro}, 
Nicol's thesis~\cite{Nicol} exhibits in each dimension $\geq 4$ infinitely many
pairs of \emph{non-irreducible} HDG manifolds with quasi-isometric fundamental groups, with the
property that each pair consists of one manifold that supports a locally CAT(0) metric and one manifold that
cannot support any locally CAT(0) metric.

Notice that in the proof of Theorem~\ref{qirigidity:thm} each vertex stabilizer is studied separately.
It might be possible to obtain additional information by studying
the interaction between vertex stabilizers of adjacent vertices, just as Kapovich and Leeb did in the $3$-dimensional case~\cite{kapleenew}:

\begin{problem}
 Is it possible to strenghten the conclusion of Theorem~\ref{qirigidity:thm}?
\end{problem}

\appendix
\section{Quasi-isometric invariance of the prime decomposition}\label{prime:sec}
One of the most influential
results in geometric group theory is Stallings' Theorem, which asserts that a group has more than one end if and only if it splits non-trivially as a free product or an HNN-extension with amalgamation  over a finite subgroup
(recall that the number of ends of any proper geodesic metric space is a quasi-isometry invariant (see e.g.~\cite[Proposition 8.29]{BH}), so
it makes sense to speak of the number of ends of a group).
Before going into the details of the proofs of Theorems~\ref{prime:thm} and~\ref{prime2:thm}, let us recall
some terminology and the main results from~\cite{PapaWhyte}. We say that a graph of groups is \emph{terminal} if every edge group is finite and every vertex group of $\Delta$ cannot be expressed as a non-trivial free product or HNN-extension
amalgamated along finite subgroups. By Stallings' Theorem, this is equivalent to say that every vertex group has less than two ends. 
We also say that a terminal graph of groups is a terminal decomposition of its fundamental group.
It is a striking fact that not all finitely generated groups admit a terminal decomposition: in other words, 
there may be cases when one can go on splitting a finitely generated group along finite subgroups an infinite number of times~\cite{Dun2}. Groups admitting a realization
as the fundamental group of a terminal graph of groups are called \emph{accessible}. In the torsion-free case, free products amalgamated along finite subgroups are just free products, so
accessibility is guaranteed by Grushko Theorem, which asserts that the minimal number of generators of a free product is the sum of the minimal number of generators of the factors.
Moreover, it is a deep result of Dunwoody~\cite{Dun} that  every finitely presented
group is also accessible, so fundamental groups of closed $3$-manifolds are accessible (this may also be easily deduced from the existence of a prime decomposition, together with 
Grushko Theorem and the fact that aspherical manifolds have torsion free fundamental groups).

 For any group $\G$, let us denote by $e(\G)$ the number of ends of $\G$. Theorem~0.4 in~\cite{PapaWhyte} states the following:

\begin{thm}[\cite{PapaWhyte}]\label{PW:thm}
 Let $\G$ be an accessible group with terminal decomposition $\mathcal{G}$. A group $\G'$ is quasi-isometric to $\G$ if and only if the following holds: $\G'$ is also accessible, $e(\G)=e(\G')$, and any terminal decomposition
 of $\G'$ has the same set of quasi-isometry types of one-ended factors as $\mathcal{G}$.
\end{thm}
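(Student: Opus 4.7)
The plan is to work throughout with the Bass--Serre tree of spaces $X(\mathcal{G})$ associated to a terminal decomposition $\mathcal{G}$ of an accessible group: its vertex spaces (the ``blocks'') are the Cayley graphs of the vertex groups of $\mathcal{G}$, and its edge spaces are the (finite) edge groups, glued along a locally infinite Bass--Serre tree $T_\mathcal{G}$. By Milnor--Svarc, $\Gamma$ is quasi-isometric to $X(\mathcal{G})$; the one-ended vertex groups correspond to blocks that are one-ended coarse subspaces, while finite vertex groups contribute only bounded pieces that may be absorbed into their incident edges. I identify $\Gamma$ with $X(\mathcal{G})$ up to quasi-isometry throughout (and likewise $\Gamma'$ with $X(\mathcal{G}')$ when a terminal decomposition is available).

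For the ``if'' direction, suppose $\Gamma'$ is accessible with a terminal decomposition $\mathcal{G}'$ sharing the set of quasi-isometry types of one-ended vertex groups, and that $e(\Gamma)=e(\Gamma')$. I would construct a quasi-isometry $X(\mathcal{G})\to X(\mathcal{G}')$ by first building a bijection between the blocks of the two trees of spaces which preserves quasi-isometry type and respects adjacency. Because each vertex of the Bass--Serre trees carries infinitely many attaching edges of each available finite type (or only finitely many when the vertex group is itself finite, a case governed by the number-of-ends condition), there is enough combinatorial flexibility to pair blocks of matching types and match up their families of finite edge spaces by a Hall-style procedure. Fixed quasi-isometries between paired blocks then glue along the bounded edge spaces to yield the desired global quasi-isometry; the hypothesis $e(\Gamma)=e(\Gamma')$ is used to handle the ``terminal'' blocks with a single attaching edge, where no slack is available.

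For the ``only if'' direction, equality of the number of ends is QI-invariance of the space of ends. The substance is to produce, from a quasi-isometry $\varphi\colon\Gamma\to\Gamma'$, a terminal decomposition of $\Gamma'$ whose one-ended vertex groups realize exactly the set of QI-types in $\mathcal{G}$, and then to argue that any terminal decomposition of $\Gamma'$ does so. The family of edge spaces of $X(\mathcal{G})$ is a $\Gamma$-invariant collection of uniformly bounded cuts whose removal disconnects $\Gamma$ into pieces each at bounded Hausdorff distance from a block. Applying $\varphi$ transports this to a uniformly bounded family of coarse cuts of $\Gamma'$; a $\Gamma'$-equivariant cleanup (replacing each coarse cut by a genuine finite cut, made possible because $\Gamma'$ has more than one end exactly when $\Gamma$ does, and then using Stallings' theorem repeatedly) converts this into a graph-of-groups splitting of $\Gamma'$ with finite edge groups and with vertex groups quasi-isometric to the blocks of $\Gamma$. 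This decomposition is terminal, proving accessibility of $\Gamma'$ and the matching of QI-types. Finally, the set of QI-types of one-ended factors is independent of the terminal decomposition, because one-ended factors are precisely the maximal one-ended coarse subspaces, an intrinsic invariant of $\Gamma'$.

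The main obstacle is the transfer of accessibility itself: by Dunwoody's examples, accessibility is not a QI-invariant among arbitrary finitely generated groups, so the argument must genuinely exploit the fact that $\Gamma$ begins accessible. The delicate point is showing that the iterative Stallings splitting of $\Gamma'$ along finite subgroups terminates with the same combinatorial shape as the one in $\Gamma$; this in turn rests on showing that the quasi-isometry $\varphi$ genuinely preserves the coarse pattern of finite cuts, rather than introducing new, non-terminating splittings. This is where the finiteness of edge groups (whose coarse preservation under $\varphi$ reduces to preservation of bounded separating subsets) plays its essential role.
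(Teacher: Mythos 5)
The paper gives no proof of this statement; it is cited verbatim as Theorem~0.4 of \cite{PapaWhyte}, so there is nothing internal to compare against except the general strategy, which your sketch captures in broad outline for the ``if'' direction. Even there you should address why the \emph{set} rather than the multiset of one-ended QI types is the invariant (compare $A*B$ with $A*A*B$: the naive block-bijection respecting adjacency does not exist, so establishing a quasi-isometry nonetheless is a core point of the theorem, not a routine Hall-type matching between trees of spaces).

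The ``only if'' direction has a genuine gap, which you correctly locate but do not close: the transfer of accessibility from $\Gamma$ to $\Gamma'$. Your proposal --- iterate Stallings on $\Gamma'$ and argue termination because $\varphi$ preserves the coarse pattern of finite cuts --- is circular, since knowing the iteration terminates \emph{is} the statement that $\Gamma'$ is accessible; Dunwoody's inaccessible group shows the iteration can fail to stop even though every individual Stallings step looks tame, and nothing in your sketch prevents the transported walls from generating an infinite descending family of splittings. You also elide the step from a transported coarse cut (a uniformly bounded separating subset of $\Gamma'$) to a genuine finite subgroup over which $\Gamma'$ splits compatibly with that cut. The clean fix, and the one that makes the theorem work, is a quasi-isometry-invariant characterization of accessibility: by a theorem of Thomassen and Woess, a finitely generated group with more than one end is accessible if and only if there is a uniform bound $k$ such that any two of its ends can be separated by removing at most $k$ vertices from a Cayley graph. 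Since this is manifestly a QI-invariant condition, accessibility transfers from $\Gamma$ to $\Gamma'$ at once, and one never needs to build a terminal decomposition of $\Gamma'$ by iterating Stallings along transported walls. With accessibility of $\Gamma'$ in hand, what remains is to show that every terminal decomposition of $\Gamma'$ has the same set of one-ended QI types as $\mathcal G$, and there your idea of characterizing one-ended vertex spaces coarsely (as maximal one-ended subsets not essentially separated by uniformly bounded sets) is a reasonable direction, but it does not by itself substitute for the accessibility argument.
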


Let us now apply this result in the context of $3$-manifolds. We first prove the following:

\begin{lemma}\label{ends:lemma}
 Let $M$ be a closed $3$-manifold with fundamental group $\G$. Then:
 \begin{enumerate}
  \item $M$ is irreducible with finite fundamental group if and only if $e(\G)=0$.
  \item $M$ is irreducible with infinite fundamental group if and only if $e(\G)=1$.
  \item $M\in \{S^2\times S^1,\mathbb{P}^3(\mathbb{R})\#\mathbb{P}^3(\mathbb{R})\}$ if and only if $e(\G)=2$.
  \item $M$ is not prime and distinct from $\mathbb{P}^3(\mathbb{R})\#\mathbb{P}^3(\mathbb{R})$ if and only if $e(G)=\infty$.
 \end{enumerate}
\end{lemma}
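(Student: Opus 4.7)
My plan is to reduce each of the four equivalences to standard facts about the number of ends of a group, Kneser--Milnor prime decomposition, Kneser's conjecture (relating free product decompositions of $\pi_1(M)$ to connected sum decompositions of $M$), and the classification of closed $3$-manifolds with small fundamental group. Recall the basic trichotomy: for a finitely generated group $\G$, we have $e(\G)=0$ iff $\G$ is finite; $e(\G)=2$ iff $\G$ is virtually $\mathbb{Z}$; and by Stallings' theorem, $e(\G)\geq 2$ iff $\G$ splits nontrivially as an amalgamated free product or HNN extension over a finite subgroup. Since $\pi_1(M)$ is always finitely generated for a closed $3$-manifold, these apply.

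Write $M=M_1\#\cdots\#M_k$ for the Kneser--Milnor prime decomposition, so that $\pi_1(M)=\pi_1(M_1)*\cdots*\pi_1(M_k)$. First I would dispose of $(1)$ and $(4)$ together: if $e(\G)=0$ then $\G$ is finite, which forces $k=1$ (a nontrivial free product of nontrivial groups is infinite), hence $M$ is prime; since the only non-irreducible prime $3$-manifold is $S^2\times S^1$ (whose group is $\mathbb{Z}$, infinite), $M$ is irreducible with finite $\pi_1$. Conversely, irreducible with finite $\pi_1$ clearly gives $e(\G)=0$. For $(4)$, if $M$ is not prime and $M\neq \mathbb{P}^3(\mathbb{R})\#\mathbb{P}^3(\mathbb{R})$, I need to see that $\pi_1(M_1)*\cdots*\pi_1(M_k)$ has infinitely many ends; this is standard: a nontrivial free product $A*B$ has $\geq 2$ ends, and has exactly two ends only when $A=B=\mathbb{Z}_2$ (the infinite dihedral group case), in which case $\G$ is virtually cyclic; so excluding $\mathbb{P}^3(\mathbb{R})\#\mathbb{P}^3(\mathbb{R})$ (using that $\mathbb{P}^3(\mathbb{R})$ is the unique closed $3$-manifold with $\pi_1=\mathbb{Z}_2$, by elliptization) forces $e(\G)=\infty$. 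The converse will follow by exclusion once the other three cases are handled.

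For $(2)$, suppose $e(\G)=1$. Then $\G$ does not split nontrivially over any finite subgroup, so by Kneser's conjecture (quoted in the excerpt as~\cite[Theorem 7.1]{Hempel}) $M$ must be prime; and since $\pi_1(M)$ is infinite but not virtually cyclic, $M\neq S^2\times S^1$, so $M$ is irreducible with infinite $\pi_1$. Conversely, assume $M$ is irreducible with $\pi_1(M)$ infinite; I need to rule out $e(\G)=2$. The key geometric input is that an irreducible closed $3$-manifold with infinite $\pi_1$ is aspherical (by the sphere theorem, as $\pi_2(M)=0$ and $\pi_k(M)=\pi_k(\widetilde M)=0$ for $k\geq 2$ since $\widetilde M$ is a contractible open $3$-manifold once one knows $\pi_1(\widetilde M)=0$ and $H_*(\widetilde M)=0$ in the aspherical case). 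If $\pi_1(M)$ were virtually $\mathbb{Z}$, passing to a finite cover $\widetilde M_0\to M$ with $\pi_1(\widetilde M_0)=\mathbb{Z}$ would yield an aspherical closed $3$-manifold of type $K(\mathbb{Z},1)=S^1$, a dimension contradiction. Hence $e(\G)=1$.

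Finally $(3)$: if $M\in\{S^2\times S^1,\,\mathbb{P}^3(\mathbb{R})\#\mathbb{P}^3(\mathbb{R})\}$, then $\pi_1(M)$ is $\mathbb{Z}$ or the infinite dihedral group, both virtually cyclic, hence $e(\G)=2$. Conversely, if $e(\G)=2$ then $\G$ is virtually cyclic, infinite but not one-ended; by $(2)$, $M$ cannot be irreducible with infinite $\pi_1$, and by $(1)$ it cannot be irreducible with finite $\pi_1$, so either $M=S^2\times S^1$ (the only non-irreducible prime option) or $M$ is not prime. In the latter case the free product $\pi_1(M_1)*\cdots*\pi_1(M_k)$ is virtually cyclic, which forces $k=2$ and $\pi_1(M_1)=\pi_1(M_2)=\mathbb{Z}_2$, whence $M=\mathbb{P}^3(\mathbb{R})\#\mathbb{P}^3(\mathbb{R})$. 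The main obstacle is really the one geometric step used in $(2)$ and (implicitly) in ruling out other prime irreducible possibilities in $(3)$: that no closed irreducible $3$-manifold has infinite virtually cyclic fundamental group. Everything else is combinatorial bookkeeping with ends and free products.
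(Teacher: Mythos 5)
Your proof is correct and follows essentially the same route as the paper's: reduce to the prime decomposition, use the Poincar\'e conjecture to see that all free factors are nontrivial, classify which nontrivial free products have exactly two ends (only $\mathbb{Z}_2*\mathbb{Z}_2$), and, in the irreducible case with infinite $\pi_1$, invoke asphericity plus a cohomological-dimension argument to rule out two-endedness. The only small divergences are organizational and bibliographical: the paper streamlines by proving only the implications ``$e(\G)=\cdot\Rightarrow$ geometric condition'' and then invoking mutual exclusivity, whereas you argue both directions for (1)--(3) and use exclusion for (4); and to identify a closed $3$-manifold with $\pi_1=\mathbb{Z}_2$ as $\mathbb{P}^3(\mathbb{R})$ the paper cites Livesay's theorem on free involutions of $S^3$ (together with Poincar\'e) while you cite elliptization -- both work. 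One minor attribution slip: in your case (2) you do not actually need Kneser's conjecture to pass from ``$\G$ admits no nontrivial free splitting'' to ``$M$ is prime''; that is the easy contrapositive of the van Kampen computation for a connected sum (plus Poincar\'e), whereas Kneser's conjecture is the converse direction, which is what the paper uses in the $e(\G)=\infty$ case.
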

\begin{proof}
 Since being irreducible with finite fundamental group, being irreducible with infinite fundamental group, belonging to $\{S^2\times S^1,\mathbb{P}^3(\mathbb{R})\#\mathbb{P}^3(\mathbb{R})\}$, and
 being not prime but distinct from $\mathbb{P}^3(\mathbb{R})\#\mathbb{P}^3(\mathbb{R})$ are mutually exclusive conditions, it is sufficient to prove the ``if'' implications.
 Let $M=M_1\#\ldots\# M_k$ be the prime decomposition of $M$, and set $\G_i=\pi_1(M_i)$ so that $\G=\G_1*\ldots *\G_k$. By the Poincar\'e conjecture, we have $\G_i\neq \{1\}$ for every $i=1,\ldots,k$.
 
 If $e(\G)=0$, then $\G$ is finite, so $k=1$ and $\G\neq \pi_1(S^2\times S^1)$. Therefore, $M$ has a finite fundamental group, and it is prime and distinct from $S^2\times S^1$, whence irreducible.
 
 Let now $e(\G)=1$, and suppose by contradiction that $M$ is not irreducible. Since $e(\pi_1(S^2\times S^1))\neq 2$ we have $M\neq S^2\times S^1$, so $M$ is not prime and
 $\G$ splits as a non-trivial free product $\G=\G_1*\ldots *\G_k$, $k\geq 2$. Now it is well-known that such a product has two ends if $k=2$ and $\G_1=\G_2=\mathbb{Z}_2$ and infinitely may ends otherwise
 (see e.g.~\cite[Theorem 8.32]{BH}). This contradicts the fact that $e(\G)=1$, and shows that $M$ is irreducible (the fact that $\G$ in infinite obviously follows from $e(\G)=1$).
 
 Let now $e(\G)=2$. We can argue as above to deduce that either $M$ is prime, or $k=2$ and $\G_1=\G_2=\mathbb{Z}_2$. In the second case, by the Poincar\'e conjecture both $M_1$ and $M_2$ are doubly covered by $S^3$,
 and this implies that $M_1=M_2=\mathbb{P}^3(\mathbb{R})$, since any fixed-point-free involution of the $3$-sphere is conjugated to the antipodal map~\cite{Livesay}. 
 Therefore, we may suppose that $M$ is prime, and since $S^2\times S^1$ is the only prime manifold which is not irreducible, we are left to show that $M$ is not irreducible. 
 Suppose by contradiction that $M$ is irreducible. Since groups with two ends are virtually infinite cyclic (see e.g.~\cite[Theorem 8.32]{BH}),  we can choose a finite covering $\widetilde{M}$ of $M$ with infinite cyclic fundamental group. 
 The manifold $\widetilde{M}$ is still irreducible, and
 it is well-known that irreducible $3$-manifolds
 with infinite fundamental groups are aspherical. Therefore, the cohomological dimension of $\mathbb{Z}=\pi_1(\widetilde{M})$ should be equal to three, a contradiction. This concludes the analysis of the case $e(\G)=2$.
 
 Finally suppose that $e(\G)=\infty$. Since $\pi_1(\mathbb{P}^3(\mathbb{R})\#\mathbb{P}^3(\mathbb{R}))$ has two ends,
 we need to show that $M$ is not prime. Since $\pi_1(S^2\times S^1)$ has two ends and finite groups have $0$ ends, we may suppose by contradiction that $M$ is irreducible with infinite fundamental
 group. We have already observed that this implies that $M$ is aspherical, so $\G$ is torsion free. By Stallings' Theorem, since $e(\G)>1$ the group $\G$ must split as an HNN-extension or an amalgamated product 
 over a finite subgroup. Being $\G$ torsion free, this implies that $\G$ actually splits as a non-trivial free product, so by the Kneser conjecture $M$ cannot be prime. 
\end{proof}

\begin{rem}\label{big:1ended}
Thanks to the previous lemma, a prime manifold $M$  is big
if and only if its fundamental group has one end.
 \end{rem}

Let now $M$ be a closed $3$-manifold with prime decomposition $M=M_1\# \ldots\# M_k$. The graph of groups $\mathcal{G}$ corresponding to this decomposition is not quite terminal according to the definition above, because
every summand homeomorphic to $S^2\times S^1$ gives rise to a vertex group which may be expressed as the unique HNN-extension of the trivial group. However, if in $\mathcal{G}$ one replaces every vertex labelled by $\mathbb{Z}$
with a vertex labelled by $\{1\}$ and a loop based at it, then the new graph of groups $\mathcal{G}_0$ still has $\pi_1(M)$ as fundamental group, and is indeed terminal. We call $\mathcal{G}_0$ the terminal graph of groups
corresponding to the prime decomposition of $M$. By Remark~\ref{big:1ended} one-ended vertex groups of $\mathcal{G}_0$  are precisely the fundamental groups of the big prime summands of $M$. 

We are now ready to prove Theorem~\ref{prime:thm}. 
Let $M=M_1\#\ldots\#M_k$ and $M'=M'_1\#\ldots \#M'_{k'}$ be the prime decompositions of $M,M'$ respectively, and let
$\mathcal{G}_0,\mathcal{G}'_0$ be the corresponding terminal graphs of groups. As usual, we set $\G=\pi_1(M)$, $\G_i=\pi_1(M_i)$,
$\G'=\pi_1(M')$, $\G_i'=\pi_1(M'_i)$. 
We first show that the conditions on $M,M'$ described in points (1), (2), (3), (4) are sufficient to ensure that $\G$ is quasi-isometric to $\G'$. 
This is obvious if $M,M'$ are both prime with finite fundamental groups, or if $M,M'$ are irreducible with infinite quasi-isometric fundamental groups, or if $M,M'\in\{S^2\times S^1,\mathbb{P}^3(\mathbb{R})\#\mathbb{P}^3(\mathbb{R})\}$, so we may suppose that both $M$ and $M'$ are not
prime and distinct from $\mathbb{P}^3(\mathbb{R})\#\mathbb{P}^3(\mathbb{R})$. In this case, Lemma~\ref{ends:lemma} ensures
that $e(\G)=e(\G')=\infty$. Moreover, 
big summands in the decompositions of $M,M'$ are exactly the one-ended vertex groups respectively
of $\mathcal{G}_0,\mathcal{G}'_0$, so the groups
$\G$ and $\G'$ are quasi-isometric by Theorem~\ref{PW:thm}.

Let us now suppose that $\G$ is quasi-isometric to $\G'$. 
Of course we have $e(\G)=e(\G')$. 
By Lemma~\ref{ends:lemma} if $e(\G)=e(\G')<\infty$ we are done, so we may suppose that both $M$ and $M'$ are not prime and distinct from $\mathbb{P}^3(\mathbb{R})\#\mathbb{P}^3(\mathbb{R})$. 
The fact that the set of quasi-isometry types of fundamental groups of big summands of $M$ coincides with 
the set of quasi-isometry types of fundamental groups of big summands of $M'$ is now a consequence of Theorem~\ref{PW:thm} and Remark~\ref{big:1ended}. This concludes the proof of Theorem~\ref{prime:thm}.

The proof of Theorem~\ref{prime2:thm} is very similar. Let $M$ be a non-prime manifold which is distinct from $\mathbb{P}^3(\mathbb{R})\#\mathbb{P}^3(\mathbb{R})$. By Lemma~\ref{ends:lemma} we have $e(\pi_1(M))=\infty$,
and we know that one-ended vertex groups of the terminal decomposition of $\pi_1(M)$ induced by the prime decomposition of $M$ correspond to fundamental groups of big summands of $M$. Therefore,
Theorem~\ref{PW:thm} implies that a group $\G$ is quasi-isometric to $\pi_1(M)$ if and only if $e(\G)=\infty$ and the set of quasi-isometry classes of one-ended vertex groups in a terminal decomposition of $\G$
is equal to the set of quasi-isometry classes of fundamental groups of big summands of $M$. This finishes the proof of Theorem~\ref{prime2:thm}. 


\end{document}